\documentclass[reqno, 12pt]{amsart} 
\usepackage{array} 
\usepackage{amsmath}

\usepackage{amsfonts}
\usepackage{amssymb}
\usepackage{mathrsfs}
\usepackage{enumerate}
\usepackage{amsthm}
\usepackage{verbatim}
\usepackage{amsmath, amscd}
\usepackage[usenames,dvipsnames]{color}
\usepackage{enumitem}
\usepackage{bm}
\usepackage{xy}
\xyoption{all}
\usepackage{stmaryrd}
\usepackage{color}
\usepackage{marvosym}
\usepackage{amsbsy}
\usepackage{tikz, tikz-cd}
\usepackage{marginnote}
\usetikzlibrary{matrix,arrows,backgrounds}\usepackage[backgroundcolor=white]{todonotes}
\usepackage{hyperref}
\usepackage{thmtools}
\usepackage[capitalize]{cleveref}

\newtheorem{theorem}{Theorem}[section]

\newtheorem{lemma}[theorem]{Lemma}
\crefalias{lemma}{lemma}

\newtheorem{proposition}[theorem]{Proposition}
\crefalias{proposition}{proposition}

\newtheorem{corollary}[theorem]{Corollary}
\crefalias{corollary}{corollary}
\theoremstyle{definition}
\newtheorem{definition}[theorem]{Definition}
\newtheorem{remark}[theorem]{Remark}
\newtheorem{example}[theorem]{Example}
\newtheorem{conjecture}[theorem]{Conjecture}

\newcommand{\op}[1]{\operatorname{#1}}

\newcommand{\leftexp}[2]{{\vphantom{#2}}^{#1}{#2}}

\newcommand{\newterm}{\textsf}

\newcommand{\window}{\leftexp{=}{\kern-0.23em\operatorname{W}}^{\kern-0.21em =}}

\newcommand{\E}{\mathcal{E}}
\newcommand{\End}{\mathcal{E}\!\mathrm{nd}}

\newcommand{\tyler}[1]{{\color{blue} \sf TK: [#1]}}

\def\id{\op{{id}}}

\def\N{\op{\mathbb{N}}}
\def\Z{\op{\mathbb{Z}}}
\def\C{\op{\mathbb{C}}}

\def\F{\op{\mathcal{F}}}

\def\O{\op{\mathcal{O}}}

\def\coker{\op{coker}}

\def\End{\op{End}}

\title[Chern character for hypersurfaces]{The Chern Character of a coherent sheaf \\ on a smooth projective hypersurface}

\author[Favero]{David Favero}
\address{
	\begin{tabular}{l}
		David Favero \\
		\hspace{.1in} University of Minnesota, School of Mathematics \\
		\hspace{.1in} 206 Church Street, Minneapolis MN 55455, USA \\
		\hspace{.1in} Email: {\bf favero@umn.edu} \\
	\end{tabular}
}

\author[Kelly]{Tyler L. Kelly}
\address{
  \begin{tabular}{l}
   Tyler L. Kelly \\
   \hspace{.1in} Queen Mary University of London, School of Mathematical Sciences \\
   \hspace{.1in} Mile End Road, E1 4UJ, London,  United Kingdom \\
   \hspace{.1in} Email: {\bf t.kelly.1@bham.ac.uk} \\
  \end{tabular}
}

\numberwithin{equation}{section}
\begin{document}

\begin{abstract}
    Given a coherent sheaf on a smooth projective hypersurface $X$, we prove an explicit formula for its Chern character as a \v{C}ech cocycle in terms of the free resolution of the associated module and calculate its image in the Jacobian ring under the Griffiths residue map.  The formula is a geometric analogue of the Kapustin-Li formula for Landau-Ginzburg models, but proven directly using Hodge-theoretic techniques.  This yields an effective method to compute the primitive part of the Chern character of any coherent sheaf using commutative algebra. We finish by proving the Hodge conjecture for the degree 33 Fermat fourfold.
\end{abstract}
\maketitle

\section{Introduction}
Let $S = k[x_0, \dots, x_{n+1}]$ and let $Q \in S$ be a non-zero homogeneous polynomial of degree $m$. Consider the quotient ring $R = S/(Q)$. If $I = \langle g_1, \dots, g_s \rangle$ is a regular sequence, then, going back to Tate \cite{Tate}, the Koszul resolution of $S/I$ can be transformed into a resolution of $R/I$ by adding an extra homological generator in degree 2. The tail end of this resolution is called a matrix factorization; it is a 2-periodic complex with differentials $A, B$ which, when lifted back to $S$, satisfy $AB = BA = Q \cdot \id$.

In fact, the Auslander--Buchsbaum formula \cite{AuslanderBuchsbaum} dictates that the maximal depth of any $R$-module is $n+2$, the Krull dimension of $R$. A module which attains this depth is called a maximal Cohen--Macaulay (MCM) module. Given an MCM $R$-module $M$, the Auslander--Buchsbaum formula also tells us that its projective dimension is $1$ as an $S$-module. Hence, it has a 2-term minimal resolution
\[
0 \to E_1 \to E_0 \to M \to 0.
\]
If $M$ has no free summands, Eisenbud \cite{Eisenbud} showed that the minimal resolution of $M$ as an $R$-module is 2-periodic of the form
\[
\cdots \xrightarrow{A} E_0(-m) \xrightarrow{B} E_1 \xrightarrow{A} E_0 \to M \to 0
\]

Furthermore, since the depth of a module increases with each syzygy until it reaches its maximum, given any finitely generated $R$-module $N$, the $(n+1)$-th syzygy $M$ is an MCM module. Hence, any finitely generated $R$-module $N$ has a minimal resolution which eventually becomes 2-periodic:
\[
\cdots \xrightarrow{A} E_0(-m) \xrightarrow{B} E_1 \xrightarrow{A} E_0 \to V_t \to \dots \to V_0 \to N \to 0
\]

To translate to geometry, let $X := Z(Q) \subseteq \mathbb{P}^{n+1}$ be a smooth projective hypersurface of dimension $n$. Consider a coherent sheaf $\mathcal{G} = \widetilde{N}$. Then $\mathcal{G}$ has a resolution 
\begin{equation}\label{eq: intro resolution of G}
\cdots \xrightarrow{A} \mathcal{E}_0(-m) \xrightarrow{B} \mathcal{E}_1 \xrightarrow{A} \mathcal{E}_0 \to \mathcal{V}_t \to \cdots \to \mathcal{V}_0 \to \mathcal{G} \to 0
\end{equation}
where the $\mathcal{E}_i, \mathcal{V}_j$ are direct sums of line bundles of the form $\mathcal{O}_X(d)$. Furthermore, since $X$ is smooth, a theorem of Grothendieck \cite{GrothendieckSGA2} tells us that the top syzygy is a vector bundle. That is, we have a locally-free resolution
\begin{equation}
\label{eq:resolution_of_a_sheaf}
0 \to \coker A \to \mathcal{V}_t \to \dots \to \mathcal{V}_0 \to \mathcal{G} \to 0
\end{equation}
and $\coker A$ is precisely the sheaf corresponding to the MCM module which is the top syzygy of $N$.  If we take $N$ to be the unique saturated module which sheafifies to $\mathcal G$, then $\coker A$ is unique as well. In geometry, vector bundles of the form $\coker A$ are called \newterm{arithmetically Cohen--Macaulay (ACM) sheaves}. 

Since the Chern character of a coherent sheaf descends to $K$-theory, the exact sequence \eqref{eq:resolution_of_a_sheaf} tells us that
\begin{equation}\label{intro eq: chern of G}
ch(\mathcal{G}) = ch(\mathcal V_0) + \dots + (-1)^{t} ch(\mathcal V_t) + (-1)^{t+1} ch(\coker A).  
\end{equation}
That is, $ch(\mathcal G)$ agrees with the Chern character of an ACM sheaf up to sign and hyperplane classes.  Hence, to study the image of the Chern character map, it suffices to study Chern characters of ACM sheaves. 

Our first result is a formula for the Chern character of an ACM sheaf $ch( \mathcal F)$ as a \v{C}ech cocycle based entirely on the differentials $A,B$ in the 2-periodic resolution of the corresponding MCM module. Define an open cover $\mathcal{U}$ on X given by 
$$
 \mathcal{U} := \{ U_{i,t} = D(x_t\partial_{i} Q) = D(x_t) \cap D(\partial_iQ)\ | \ i,t \in \{0,\dots, n+1\}\}.
 $$
Our ACM sheaf $\coker A$ has a resolution given by $$\dots \xrightarrow{A} \E_0(-m) \xrightarrow{B} \E_1 \xrightarrow{A} \E_0 \longrightarrow \coker A.$$
 Write $Q_i := \partial_iQ$ for the $i$th partial derivative and take the cohomology classes
$$
\Theta :=  \left(\left(\Lambda_{\mathbf{d}}\frac{\partial_{i}A}{x_{t}Q_i}dx_{t}  - \Lambda_{\mathbf{d}}\frac{\partial_{j}A}{x_uQ_j} dx_{u} \right)B \right)_{U_{i,t} \cap U_{j,u}} \in H^1(X, \End(\mathcal{\E}_0) \otimes \Omega_X^1)
$$
and 
$$
\Xi := \left(\frac{dA \partial_{i}B\partial_{j}AB}{Q_i Q_j}\right)_{U_{i,t} \cap U_{j,u}} \in H^1(X, \End(\mathcal{\E}_0) \otimes \Omega_X^1),
$$
viewed as \v{C}ech cocycles for the cover $\mathcal U$.

\begin{theorem}[= \Cref{thm: chern character}]\label{intro thm: chern character}
The $k$th Chern character $\op{ch}_k(\coker A)$ of $\coker A$ is the \v{C}ech cocycle given by
$$
\op{ch}_k(\coker A) = \frac{1}{k!} \op{tr}((\Theta - \Xi)^k).
$$
\end{theorem}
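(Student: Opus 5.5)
The plan is to compute the Chern character through the Atiyah class. For a vector bundle $\mathcal F$ on smooth $X$, the Atiyah class $\op{At}(\mathcal F)\in H^1(X,\End(\mathcal F)\otimes\Omega^1_X)$ is the obstruction to a global algebraic connection, and $\op{ch}_k(\mathcal F)=\frac{1}{k!}\op{tr}(\op{At}(\mathcal F)^k)$ up to the usual sign convention $(-1)^k$. So the theorem reduces to two steps. First, I will show that $\op{At}(\coker A)$ is represented by $\Theta-\Xi$, after transporting it to $\End(\E_0)$ via the surjection $\E_0\to\coker A$. Second, I will check that the trace of powers is compatible with this transport.

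\textbf{Local connections.} On each $U_{i,t}$ I will build an explicit local connection on $\coker A$ from the matrix factorization. On $\E_0$, a sum of line bundles $\O_X(d)$, the standard local connection on $D(x_t)$ is $\nabla_t=d-\Lambda_{\mathbf d}\frac{dx_t}{x_t}$, where $\Lambda_{\mathbf d}$ is the diagonal matrix of twists. The difference $\nabla_t-\nabla_u$ contributes the $\Lambda_{\mathbf d}$ terms in $\Theta$. To descend to $\coker A$, I will use a local splitting. Differentiating $AB=Q\cdot\id$ in $x_i$ gives $\partial_iA\,B+A\,\partial_iB=Q_i\,\id$. Hence on $D(Q_i)$, restricted to $X$, the operator $e_i:=\frac{\partial_iA\, B}{Q_i}$ is an idempotent-type endomorphism of $\E_0$. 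Its complement $1-e_i=\frac{A\partial_iB}{Q_i}$ has image in $\op{im}A$. So $1-e_i$, or $e_i$ after Euler-relation corrections, gives a local projection realizing $\coker A$ as a direct summand of $\E_0|_{U_{i,t}}$. The local connection is then $\nabla_{i,t}=p_i\circ\nabla_t\circ s_i$.

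\textbf{Čech difference.} Next I compute $\nabla_{i,t}-\nabla_{j,u}$ on overlaps, conjugated back to $\E_0$. This should produce two kinds of terms. One kind comes from the line-bundle parts together with the projectors, namely $(\Lambda_{\mathbf d}\frac{\partial_iA}{x_tQ_i}dx_t-\ldots)B$. I expect to use the Euler relation $\sum x_a\partial_aA=(\deg)A$, and hence $\Lambda_{\mathbf d}$-weighted identities, to rewrite $\frac{dx_t}{x_t}$ terms against $\partial_iA$. The other kind comes from $p_i\,d(s_j)$, which involves $dA\,\partial_iB\,\partial_jA\,B/(Q_iQ_j)$; this is $\Xi$.

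\textbf{Traces and the main obstacle.} Taking traces of $k$-th powers, the cocycle on $\E_0$ restricts along the projector to $\coker A$, since $\op{tr}(e\,M\,e)$ is cyclically compatible, and this yields $\op{ch}_k$. The main obstacle is the bookkeeping in the Čech difference. Specifically, I must verify that all correction terms are Čech coboundaries or vanish after composing with the projectors and using $AB=BA=Q$ on $X$. I would first handle the case of a single chart index pair $i=j$ and then the general overlap.
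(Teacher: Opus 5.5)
Your architecture is the paper's: the idempotent $e_i=\partial_iA\,B/Q_i$ on $U_{i,t}$, the induced connection $\pi\circ\nabla_t\circ s_i$, a \v{C}ech difference that splits into a $\Lambda_{\mathbf d}$-part and a $dA$-part, and cyclicity of the trace to turn each factor into a matrix on $\E_0$.

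Several of your hedges can be dropped:
\begin{itemize}
\item $e_i$ is an honest idempotent on $X$. It kills $\op{im}A$ because $BA=Q$, and $1-e_i=A\partial_iB/Q_i$ lands in $\op{im}A$, so $e_i(1-e_i)=0$.
\item No Euler relation is needed to produce $\Theta$, since $\Lambda_{\mathbf d}\frac{dx_t}{x_t}e_i$ is literally its summand.
\item No coboundary corrections occur.
\end{itemize}
The identity that settles the $dA$-part, which you did not state, is
\[
e_i-e_j=-\frac{A\,\partial_iB\,\partial_jA\,B}{Q_iQ_j}+Q\,\frac{\partial_iA\,\partial_jB}{Q_iQ_j}.
\]
Combined with $\pi A=0$, $\pi(e_i-e_j)=0$, $Q|_X=0$ and $dQ|_X=0$, it gives $\pi\,d(e_i-e_j)=-\pi\,\Xi_{ij}$ exactly.

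The genuine problem is the sign, and it does not come out as you predict. With your connection $\nabla_t=d-\Lambda_{\mathbf d}\frac{dx_t}{x_t}$, the $\Lambda$-part of $\nabla_{i,t}-\nabla_{j,u}$ is
\[
\pi\bigl(-\Lambda_{\mathbf d}\tfrac{dx_t}{x_t}e_i+\Lambda_{\mathbf d}\tfrac{dx_u}{x_u}e_j\bigr)=-\pi\,\Theta .
\]
So you will get $\op{at}(\F)=\pi(-\Theta-\Xi)$, and hence $\op{ch}_k=\frac{1}{k!}\op{tr}((-\Theta-\Xi)^k)$. Your $(-1)^k$ convention only converts this to $\frac{1}{k!}\op{tr}((\Theta+\Xi)^k)$. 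The relative sign between the two pieces is not a convention, so your plan as written does not close on the displayed formula. The paper obtains $+\Theta$ because its trivialization $\psi_t$ multiplies by $x_t^{d_\ell}$, which effectively uses $d+\Lambda_{\mathbf d}\frac{dx_t}{x_t}$.

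Your sign is in fact the consistent one for $\E_0=\bigoplus\O(d_\ell)$. Contract with the Euler field $E$ and use $\sum_a x_a\partial_aA=\Lambda_{\mathbf d}A-A\Lambda_{\mathbf e}$, where $\Lambda_{\mathbf e}$ is the diagonal matrix of twists of $\E_1$. This gives
\[
\pi\,\iota_E\Theta=\pi\Lambda_{\mathbf d}(e_i-e_j)=-\pi\,\iota_E\Xi .
\]
Hence $\pi(\Theta+\Xi)$ descends to an $\Omega^1_X$-valued cochain. By contrast, $\pi(\Theta-\Xi)$ has contraction $2\pi\Lambda_{\mathbf d}(e_i-e_j)$, which is nonzero in general once $\Lambda_{\mathbf d}$ is not scalar. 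An example is the Koszul factorization of $x_0b_0+x_1b_1$ with $b_0,b_1$ quadrics, where the two summands of $\E_0$ have different twists.

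So you have two options. You can read $\Lambda_{\mathbf d}$ in $\Theta$ as minus the twists, which is the convention implicit in the paper's $\psi_t$. Or you can carry your computation through and state the result as $\frac{1}{k!}\op{tr}((-\Theta-\Xi)^k)$. Either way the discrepancy sits entirely in $\Theta$, which is a formal \v{C}ech coboundary, so it does not affect the paper's later primitive computation.
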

This theorem is proven by finding local algebraic connections and computing the Atiyah class explicitly.

Since $\mathcal{V}_i = \oplus_{j=1}^{r_j} \O_X(d_{ij}H)$, $\op{ch}(\mathcal{V}_i) = \sum_{j=1}^{r_j}e^{d_{ij}H}$, which yields an explicit formula for $\op{ch}(\mathcal{G})$. The explicit \v{C}ech cocycle for the hyperplane class is $H = (\frac{dx_i}{x_i} -\frac{dx_j}{x_j})_{U_{i,t} \cap U_{j,u}}$ for all $t,u$.  Plugging this into~\eqref{intro eq: chern of G}, we get the following formula:
\begin{corollary}\label{cor: Cech chern G}
A coherent sheaf $\mathcal G$ on a smooth projective hypersurface $X$ has Chern character
$$
\op{ch}_k(\mathcal G) = \frac{1}{k!}\left(\sum_{i=0}^t (-1)^t\left(\sum_{j=1}^{r_j} d_{ij}H\right)^k +(-1)^{t+1}  \op{tr}\left((\Theta - \Xi)^k\right)\right).
$$
\end{corollary}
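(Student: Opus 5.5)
The plan is to read off the formula from the $K$-theoretic relation \eqref{intro eq: chern of G}, treating it degree by degree. We combine three ingredients: the resolution \eqref{eq:resolution_of_a_sheaf}, the additivity of the Chern character, and \Cref{intro thm: chern character} for the ACM term.

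First, $\mathcal G$ is taken to be $\widetilde N$ for the saturated module $N$. The minimal resolution of $N$ eventually becomes $2$-periodic, and by Grothendieck's theorem its sheafification truncates to the locally free resolution \eqref{eq:resolution_of_a_sheaf}. Since the Chern character is additive on short exact sequences, we split that long exact sequence into short exact sequences and obtain \eqref{intro eq: chern of G}. Taking the degree $k$ component of that identity gives
$$\op{ch}_k(\mathcal G)=\sum_{i=0}^t(-1)^i\op{ch}_k(\mathcal V_i)+(-1)^{t+1}\op{ch}_k(\coker A).$$

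Second, we compute each term on the same cover $\mathcal U$.
\begin{itemize}
\item Each $\mathcal V_i=\bigoplus_j\O_X(d_{ij})$ is a sum of line bundles, so $\op{ch}_k(\mathcal V_i)=\frac{1}{k!}\sum_j (d_{ij}H)^k$, using $c_1(\O_X(d))=dH$ and multiplicativity of $\op{ch}$ on line bundles.
\item $H$ is represented on $\mathcal U$ by the cocycle $\bigl(\frac{dx_i}{x_i}-\frac{dx_j}{x_j}\bigr)$. This holds because $\O_X(1)$ has transition functions $x_i/x_j$, whose dlog is the displayed cocycle; these are defined on the refinement $U_{i,t}$, whichever index is used.
\item \Cref{intro thm: chern character} gives $\op{ch}_k(\coker A)=\frac{1}{k!}\op{tr}((\Theta-\Xi)^k)$ on the same cover.
\end{itemize}

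Substituting these into the degree $k$ identity yields the stated formula. The sign $(-1)^t$ in the statement should be read as $(-1)^i$. Also, $\sum_j (d_{ij}H)^k$ is written in the statement as $\bigl(\sum_j d_{ij}H\bigr)^k$; the former is the correct expression, and we would use it.

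The only delicate point is ensuring all classes are compared as Čech cocycles in one group, namely $H^k(X,\Omega^k_X)$ computed with respect to $\mathcal U$. Products must be taken with the standard Čech cup product so that powers of $H$ and $\op{tr}((\Theta-\Xi)^k)$ are compatible. This is routine, since $\mathcal U$ refines the standard cover $\{D(x_i)\}$.
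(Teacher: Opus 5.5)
Your proposal is correct and follows the paper's own route: substitute $\op{ch}(\mathcal V_i)=\sum_j e^{d_{ij}H}$ and \Cref{intro thm: chern character} into \eqref{intro eq: chern of G} and take the degree-$k$ part. Your two corrections to the displayed formula are also right and agree with the paper's line $\op{ch}(\mathcal V_i)=\sum_j e^{d_{ij}H}$: the sign should be $(-1)^i$, and the term should be $\sum_j (d_{ij}H)^k$ rather than $(\sum_j d_{ij}H)^k$.

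One precision about your phrase ``whichever index is used'': it does matter which index you use. On $U_{i,t}\cap U_{j,u}$ the hyperplane cocycle must be $\frac{dx_t}{x_t}-\frac{dx_u}{x_u}$, which is the pullback along the refinement $U_{i,t}\mapsto D(x_t)$. The reason is that $x_i$ need not be invertible on $U_{i,t}=D(x_t)\cap D(Q_i)$. This indexing is also what matches the $\Lambda_{\mathbf d}\frac{dx_t}{x_t}$ terms in $\Theta$.
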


On a hypersurface, we have the added benefit of the Griffiths Residue Theorem \cite{Gri} which identifies the primitive cohomology of $X$ with graded pieces of the Jacobian ring (see~\Cref{Griffiths Residue Theorem} for the precise statement). Here, the primitive cohomology $H_{\op{prim}}^n(X,\C)$ of $X$ is the subspace of $H^n(X,\C)$ that is orthogonal to the hyperplane class. As the Chern character lives in $\oplus_{p}H^{p,p}(X, \C)$, it can only hit the primitive cohomology of $X$ when $X$ has even dimension. 

Moving forward, assume $n = 2k$.  The Griffiths Residue Theorem gives that
$$
H^{k,k}_{\op{prim}}(X) \cong \left(\mathbb{C}[x_0, \dots, x_{2k+1}]/J(Q)\right)_{(k+1)\deg Q -(2k+2)}
$$
is exactly the primitive target for the Chern character map.  Our next result gives the explicit polynomial expression for the primitive component of the Chern character of an arbitrary coherent sheaf expressed as a homogeneous element of the Jacobian ring of degree $(k+1)\deg Q -(2k+2)$.
\begin{theorem}[=\Cref{thm: KL}]\label{intro thm: KL} 
Let $\mathcal{G}$ be a coherent sheaf on a smooth $2k$-dimensional projective hypersurface $X$ with resolution ~\eqref{eq: intro resolution of G}.
The projection $\op{ch}_k^{\op{prim}}(\mathcal G)$ of the $k$th Chern character to the primitive cohomology of $X$  viewed as an element of the Jacobian ring is given by the formula
$$
\op{ch}_k^{\op{prim}}(\mathcal G) = \frac{(-1)^kc_k}{m}\op{tr}( \partial_{0}A\partial_1B \cdots \partial_{2k}A\partial_{2k+1} B -  \partial_{0}B\partial_1A \cdots \partial_{2k}B\partial_{2k+1} A),
$$
where $c_k = \tfrac{(-1)^{k(k+1)/2}}{k}$. If the resolution of $\mathcal{G}$ by direct sums of line bundles of the form $\O_X(d)$ is bounded, then $\E_0$ and $\E_i$ are the trivial sheaf and $\op{ch}_k^{\op{prim}}(\mathcal G)=0$.
\end{theorem}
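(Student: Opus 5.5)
We deduce this from the \v{C}ech formula of \Cref{intro thm: chern character}, by pushing the class $\frac{1}{k!}\op{tr}((\Theta-\Xi)^k)\in H^k(X,\Omega^k_X)$ through the inverse of the Griffiths residue isomorphism (\Cref{Griffiths Residue Theorem}).

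\textbf{Reduction to the ACM part.} By \eqref{intro eq: chern of G}, $\op{ch}_k(\mathcal G)$ is an alternating sum of $\op{ch}_k(\mathcal V_i)$ and $(-1)^{t+1}\op{ch}_k(\coker A)$. Each $\mathcal V_i$ is a sum of line bundles $\O_X(d)$, so $\op{ch}_k(\mathcal V_i)$ is a multiple of $H^k$ and has zero primitive projection. Hence it suffices to compute $\op{ch}_k^{\op{prim}}(\coker A)$. Two side remarks:
\begin{itemize}
\item The sign $(-1)^{t+1}$ is absorbed by swapping the roles of $A$ and $B$. Shifting the periodic resolution by one step exchanges $A$ and $B$, and the expression $\op{tr}(\partial_0A\partial_1B\cdots-\partial_0B\partial_1A\cdots)$ is antisymmetric under this swap. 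So the formula does not depend on where the periodic tail starts.
\item In the bounded case, the periodic part is a matrix factorization of the form $(1,Q)$ or $(Q,1)$ (trivial $\E$'s). Then all $\partial_iA$ or all $\partial_iB$ vanish, and both traces are zero, giving the last claim.
\end{itemize}

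\textbf{Computing the residue preimage.} Recall how the Griffiths residue isomorphism arises. Take a polynomial $P$ of degree $(k+1)m-(2k+2)$. The form $P\Omega/Q^{k+1}$, with $\Omega=\sum(-1)^ix_i\,dx_0\wedge\cdots\widehat{dx_i}\cdots\wedge dx_{2k+1}$, has a residue whose $(k,k)$-component is given by a \v{C}ech representative on the Jacobian cover $\{D(Q_i)\}$. This is the Carlson--Griffiths description: a cocycle with entries of the shape
\[
c\,\frac{P\,\Omega_{i_0\cdots i_k}}{Q_{i_0}\cdots Q_{i_k}},
\]
where $\Omega_{i_0\cdots i_k}$ denotes contraction of $\Omega$ by the Euler-type vectors. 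This is exactly why the cover $\mathcal U$ uses $D(\partial_iQ)$.

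The plan is to expand $\op{tr}((\Theta-\Xi)^k)$ on a $k$-fold intersection $U_{i_0,t_0}\cap\cdots\cap U_{i_k,t_k}$, modulo coboundaries, and match it against this Carlson--Griffiths form. Three steps:
\begin{enumerate}
\item Use the identities $AB=BA=Q$ to differentiate to $\partial_iA\,B+A\,\partial_iB=Q_i$, and the Euler relations $\sum x_i\partial_iA=\deg\cdot A$. These rewrite the products $dA\,\partial_iB\,\partial_jA\,B$.
\item Restrict to $X$ (where $Q=0$) and kill the $\Lambda_{\mathbf d}$ terms of $\Theta$ as hyperplane-type contributions, which die after projection to primitive cohomology.
\item Show that only the terms in which each $Q_{i_s}$ denominator is paired with one $\partial_{i_s}A$ and one $\partial_{i_s}B$ survive. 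After antisymmetrizing over the $2k+2$ indices, the numerator becomes
\[
\op{tr}\bigl(\partial_0A\partial_1B\cdots\partial_{2k+1}B-\partial_0B\partial_1A\cdots\partial_{2k+1}A\bigr)
\]
times the standard form.
\end{enumerate}
The factor $1/m$ comes from the Euler relation $\sum x_iQ_i=mQ$, used to convert the $dx_t/x_t$ terms into $\Omega$. The constant $c_k$ collects the $1/k!$, the sign from reordering the $2k+2$ differentials, and the normalization of the residue.

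\textbf{Main obstacle.} The hard step is the middle comparison: showing that $\op{tr}((\Theta-\Xi)^k)$ is cohomologous to the Carlson--Griffiths cocycle for the stated polynomial $P$. This requires an explicit chain of \v{C}ech--de Rham coboundaries (a zig-zag in the double complex computing $H^{2k}$ of $\mathbb P^{2k+1}\setminus X$). We first try to check the case $k=1$ directly. We then organize the general case as a Koszul-type calculation, viewing the $\partial_iA$, $\partial_iB$ as a Clifford-type action whose supertrace extracts the top antisymmetric part. This is the Kapustin--Li mechanism, and it explains the alternating $A/B$ pattern. Tracking signs and the precise constant $c_k=(-1)^{k(k+1)/2}/k$ is where the computation is most error-prone.
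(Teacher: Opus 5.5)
\textbf{Verdict: there is a genuine gap.} Your reduction to $\mathcal F=\coker A$ and your treatment of the bounded case are fine. Your remark that the sign $(-1)^{t+1}$ can be normalized away by shifting the tail (which swaps $A$ and $B$ and flips the trace expression) addresses a point the paper passes over.

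The gap is the step you yourself call the main obstacle. Nothing in the proposal shows that the primitive part of $\frac{1}{k!}\op{tr}((\Theta-\Xi)^k)$ is the residue class of the stated polynomial, and steps (1)--(3) are assertions rather than a mechanism.
\begin{itemize}
\item In step (2), mixed products such as $\op{tr}(\Theta\,\Xi^{k-1})$ are not visibly multiples of $H^k$. $\Theta$ is not $H\cdot\id$; it is $\Lambda_{\mathbf d}$ composed with the non-constant local idempotents $\partial_iA\,B/Q_i$. Discarding these terms as ``hyperplane-type'' needs proof.
\item In step (3), on a $(k+1)$-fold intersection the cocycle carries only the denominators $Q_{i_0},\dots,Q_{i_k}$, while the target numerator involves all $2k+2$ partials. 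The other derivatives are hidden in the $1$-forms $dA$, and ``antisymmetrizing over the $2k+2$ indices'' does not explain how they become denominators.
\item Your account of the $1/m$ is misattributed. The $\Lambda_{\mathbf d}\,dx_t/x_t$ terms contribute nothing to the primitive part; $m$ enters through $\sum_i x_iQ_i=mQ$ on the Carlson--Griffiths side.
\end{itemize}

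The missing idea is to avoid exhibiting coboundaries in $H^k(X,\Omega^k_X)$ altogether. Instead, push both classes along the composite of connecting homomorphisms
\[
\pmb{\delta}=\delta_{\mathbb P^{2k+1}}\circ\delta_{2k-1}\circ\cdots\circ\delta_k\colon H^k(X,\Omega^k_X)\longrightarrow H^{2k+1}(\mathbb P^{2k+1},\O(-(k+1)m)),
\]
induced by $0\to\Omega^{2k-\ell-1}_X(-m)\xrightarrow{dQ}\Omega^{2k-\ell}_{\mathbb P^{2k+1}}|_X\to\Omega^{2k-\ell}_X\to0$ and $0\to\O_{\mathbb P^{2k+1}}(-(k+1)m)\xrightarrow{Q}\O_{\mathbb P^{2k+1}}(-km)\to\O_X(-km)\to0$. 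This map has three useful features:
\begin{itemize}
\item Its kernel is exactly $\C H^k$, so it performs the primitive projection for you.
\item In the target (cover $\{D(Q_i)\}$), the class of $P/(Q_0\cdots Q_{2k+1})$ depends only on $P \bmod J(Q)$ by the residue pairing. So ``matching'' reduces to comparing numerators.
\item Each $\delta_\ell$ trades one $1$-form factor for one more denominator, which is exactly the conversion your step (3) lacks.
\end{itemize}

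On the Carlson--Griffiths side, $\delta_\ell(\Omega^M_\ell)=\Omega^M_{\ell+1}$, and the last map gives $(-1)^kmc_k\,M/(Q_0\cdots Q_{2k+1})$.

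On the Chern side the computation runs as follows.
\begin{itemize}
\item $\Theta$ is \v{C}ech-closed before restriction, and $d_{\check C}(\Xi)$ is $dQ$ times the $0$-form cocycle $\xi$.
\item The Leibniz rule then gives $\delta_{2k-1}\circ\cdots\circ\delta_k(\op{ch}_k(\mathcal F))=\op{tr}(\xi^k)$ with no $\Theta$ left. This is what disposes of the mixed terms.
\item The identity $Q_i=\partial_iA\,B+A\,\partial_iB$ telescopes $d_{\check C}(\hat\xi^k)$, modulo coboundaries (terms missing some $Q_q$), to $(B\partial^A_{0,\dots,2k+1}A-Q\partial^B_{0,\dots,2k+1})/(Q_0\cdots Q_{2k+1})$.
\item After taking the trace, cyclicity and $AB=Q$ make the numerator divisible by $Q$. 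Dividing by $Q$ leaves $\op{tr}(\partial^A_{0,\dots,2k+1}-\partial^B_{0,\dots,2k+1})$.
\end{itemize}

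Your worry about constants is justified. Matching against $(-1)^kmc_k\,M$ gives the prefactor $(-1)^k/(mc_k)$. This agrees with the stated $(-1)^kc_k/m$ only when $c_k^2=1$.
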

The proof of the above theorem leverages the explicit \v{C}ech representative of \Cref{cor: Cech chern G} together with work of Carlson and Griffiths \cite{CG} which provides the explicit \v{C}ech cocycles corresponding to polynomials in the Jacobian ring.  Specifically, we trace both the Carlson-Griffiths expression and the expression in \Cref{intro thm: chern character} through a sequence of connecting homomorphisms
\[
\delta_{\ell}: H^{\ell}(X,\Omega_X^{n-\ell}) \longrightarrow H^{\ell+1}(X, \Omega_{X}^{n-\ell-1}(-\deg Q)),
\]
which identifies both \v{C}ech cocycles with polynomials.
We note that the formula found in~\Cref{intro thm: KL} is satisfyingly recognizable as a Kapustin-Li formula, which computes Chern characters for categories of matrix factorizations \cite{KapustinLi, DyckerhoffMurfet, PV12}.

A fun consequence is a rephrasing of the Hodge Conjecture for smooth projective hypersurfaces in terms of matrices.
\begin{conjecture}\label{conj: MF Hodge}
    Suppose $X = Z(Q)\subseteq \mathbb{P}^{2k+1}$ is a $2k$-dimensional projective hypersurface. Suppose $f \in \op{Jac}(Q)$. The polynomial $f$ represents a Hodge class in $H^{k,k}_{\op{prim}}(X)$ if and only if there exists square matrices $A_1, B_1, \dots, A_N, B_N$ for some $N \in \N$ with values in $\mathbb C[x_0,..., x_{2k+1}]$ so that 
    \begin{enumerate}
        \item $A_i B_i = B_i A_i =  Q\cdot \id$, and
        \item There exists constants $c_i \in \mathbb{Q}$ so that
        $$
        f = \sum_{i=1}^N c_i \left( \op{tr}( \partial_{0}A\partial_1B \cdots \partial_{2k}A\partial_{2k+1} B -  \partial_{0}B\partial_1A \cdots \partial_{2k}B\partial_{2k+1} A)\right),
        $$
        where $\partial_i A$ (resp. $\partial_i B$) is the matrix obtained by taking the partial derivative with respect to $x_i$ on all entries of $A$ (resp. $B$).
    \end{enumerate}
\end{conjecture}
Note that \Cref{intro thm: KL} implies the following:
\begin{corollary}
The Hodge conjecture is true for $X$ if and only if \Cref{conj: MF Hodge} is.
\end{corollary}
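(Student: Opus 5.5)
The corollary is a formal consequence of \Cref{intro thm: KL} combined with two standard facts. First, Hodge classes decompose along the hyperplane class. Second, every rational algebraic class is a rational combination of Chern characters of coherent sheaves. The plan is to prove the two implications separately, after first reducing the Hodge conjecture for $X$ to its primitive middle-degree part.

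\textbf{Reduction to primitive middle degree.} By the Lefschetz hyperplane theorem and hard Lefschetz, for $p \neq k$ the rational cohomology $H^{2p}(X,\mathbb Q)$ of the smooth hypersurface $X$ is spanned by $H^p$, which is algebraic. In middle degree,
$$H^{2k}(X,\mathbb Q) = \mathbb Q H^k \oplus H^{2k}_{\op{prim}}(X,\mathbb Q),$$
and this decomposition respects Hodge type. Hence the Hodge conjecture for $X$ is equivalent to the following statement: every rational Hodge class in $H^{k,k}_{\op{prim}}(X)$ is a $\mathbb Q$-combination of primitive projections of classes of subvarieties.

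Next I would replace classes of subvarieties by Chern characters. By Grothendieck--Riemann--Roch, or simply the standard fact that $\op{ch}_k(\O_Z)=[Z]$ plus lower-codimension corrections, the $\mathbb Q$-span of the classes $\op{ch}_k(\mathcal G)$ equals the $\mathbb Q$-span of the algebraic cycle classes in codimension $k$. Projection to primitive cohomology commutes with taking spans. Therefore the Hodge conjecture for $X$ holds if and only if every primitive rational Hodge class is a $\mathbb Q$-combination of classes $\op{ch}_k^{\op{prim}}(\mathcal G)$. One must also note that \Cref{conj: MF Hodge} should be read as concerning rational Hodge classes, so that the field of coefficients matches.

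\textbf{Matrix factorizations to sheaves.} Suppose $A_i, B_i$ satisfy $A_iB_i=B_iA_i=Q\cdot\id$. After making them graded, which is possible after choosing twists when the entries are homogeneous (I expect to restrict to graded factorizations as the statement implicitly requires), $\coker A_i$ is an MCM $R$-module. Its sheaf $\coker A_i$ is an ACM sheaf whose periodic resolution is given by $A_i, B_i$. \Cref{intro thm: KL}, applied with $\mathcal G = \coker A_i$ and $t$ empty, says that the trace expression equals $(-1)^k m c_k^{-1}\,\op{ch}_k^{\op{prim}}(\coker A_i)$. This is a rational nonzero multiple of an algebraic primitive class. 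Consequently, any $f$ as in (2) is a rational combination of algebraic classes.

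\textbf{Sheaves to matrix factorizations.} Conversely, given a coherent sheaf $\mathcal G$, \eqref{intro eq: chern of G} and \Cref{intro thm: KL} express $\op{ch}_k^{\op{prim}}(\mathcal G)$ as a rational multiple of the trace for the factorization $(A,B)$ of its top syzygy. The line bundle terms have no primitive part.

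Combining these gives both directions: $f$ is Hodge exactly when it is a combination of algebraic classes, and those combinations are exactly combinations of the traces. The only genuinely delicate step is the reduction to primitive middle degree, in particular checking that primitive projection preserves rationality. This holds because $H^k$ is rational and its self-intersection $H^{2k}\cdot[X]=m\neq0$.
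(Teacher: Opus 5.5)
Your proposal is correct and follows the paper's approach. The paper only states that the corollary follows from \Cref{intro thm: KL}; you supply the standard facts this tacitly uses: the Lefschetz reduction to the primitive middle-degree part, the equality of the $\mathbb{Q}$-span of the $\op{ch}_k(\mathcal G)$ with the span of algebraic classes, the correspondence between graded factorizations and ACM sheaves, and the rationality of the constant $(-1)^k c_k/m$. Two minor points: for $Z$ of codimension $k$ one has $\op{ch}_k(\O_Z)=[Z]$ exactly, since the corrections in $\op{ch}(\O_Z)$ lie in higher, not lower, codimension; and the paper's Atiyah-class $\op{ch}_k$ differs from the rational topological one by a universal factor (a power of $2\pi i$, up to sign), so ``Hodge class'' in \Cref{conj: MF Hodge} must be read in that normalization, which sharpens your remark about matching coefficients.
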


If we specialize even further to a Fermat hypersurface, the complexified Hodge classes have been classified by Katz \cite{Katz} and Ogus \cite{Ogus} (see \cite[Theorem I]{ShiodaHodge}). In light of this classification, we denote
$$
\mathcal{B}_m^{2k} = \left\{ \mathbf{d} =(d_0, \dots, d_{2k+1}) \ \middle|
 \  d_i \in \{0, \dots, m-2\}, \text{ and } \sum_{i=0}^{n+1} \left\langle\tfrac{d_i+1}{m}\right\rangle = k+1 \text{ for all $(\Z / m\Z)^\times$}\right\}.
$$

Using \Cref{intro thm: KL}, \cite[Theorem I]{ShiodaHodge}, \Cref{prop: g action on MF}, and \Cref{prop: isolate algebraics fermat}, the Hodge conjecture for the Fermat $2k$-fold of degree $m$  specializes to the following simple statement.
\begin{conjecture}\label{conj: Fermat MF Hodge}
Let $Q := \sum_{i=0}^{2k+1}x_i^m$.
There exists square matrices $A,B$ with values in $\mathbb C[x_0,..., x_{2k+1}]$ such that $A B = B A =  Q\cdot \id$ and the polynomial 
        $$
         \op{tr}( \partial_{0}A\partial_1B \cdots \partial_{2k}A\partial_{2k+1} B -  \partial_{0}B\partial_1A \cdots \partial_{2k}B\partial_{2k+1} A) = \sum_{\mathbf{d} \in \mathcal{B}_m^{2k}} c_{\mathbf{d}} x_0^{d_0}\cdots x_{2k+1}^{d_{2k+1}} \in \op{Jac}(Q)
        $$ 
 where $c_{\mathbf{d}} \neq 0$ for all $\mathbf{d}$.
\end{conjecture}

This conjecture is known when $m$ is a prime power and  for small $m,n$ \cite{ShiodaHodge, Ran, Aoki, daSilva} (see \Cref{thm: known Fermat Hodge}).  
All these cases find the relevant Hodge classes by providing explicit $k$-dimensional complete intersections in $\mathbb{P}^{2k+1}$ lying inside the hypersurface $X=Z(Q)$. These classes can be recovered using the following formula.
\begin{theorem}[=\Cref{thm: koszul det chprim}] \label{intro thm: Koszul}
Suppose that $Q = \sum_{i=0}^k a_ib_i$ and that $Z = V(a_0, ..., a_{k})$ is a complete intersection in $\mathbb P^{2k+1}$.  Consider the matrix
\[
M_Z := \begin{bmatrix}
\partial_0 a_0 & \cdots& \partial_0 a_k & \partial_0 b_0 & \cdots & \partial_0 b_k  \\
\vdots & &\vdots & \vdots & & \vdots \\
\partial_{2k+1} a_0 & \cdots & \partial_{2k+1} a_k &  \partial_{2k+1} b_0 &  \cdots & \partial_{2k+1} b_k  \\
\end{bmatrix}
\]
Then
\[
\op{ch}^{\op{prim}}_k(\mathcal O_Z) = (-1)^{k+1}\op{det}(M_Z).
\]
\end{theorem}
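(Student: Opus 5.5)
The plan is to apply \Cref{intro thm: KL} to an explicit resolution of $\mathcal O_Z$, and then reduce the resulting trace to a determinant inside $\op{Jac}(Q)$.

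\textbf{Step 1: the resolution.} Since $Q=\sum_i a_ib_i\in(a_0,\dots,a_k)$, we have $Z\subseteq X$, and $\mathcal O_Z$ is the sheaf of $R/(a_0,\dots,a_k)$. I will use Tate's construction from the introduction. The Koszul complex of the regular sequence $a_0,\dots,a_k$, together with one degree-2 divided-power generator, resolves $R/(a_0,\dots,a_k)$ over $R$. Its eventual 2-periodic tail is the Koszul matrix factorization. Concretely, let $W=\bigoplus_{i=0}^k S e_i$ with twists chosen so that everything is graded. Put $E=\Lambda^\bullet W=E_{\mathrm{even}}\oplus E_{\mathrm{odd}}$, and let
\[
D=\sum_{i=0}^k \bigl(a_i\,\iota_{e_i}+b_i\,e_i\wedge-\bigr),
\]
with $A,B$ the two odd components of $D$. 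The Clifford relations give $D^2=\sum_i a_ib_i=Q$, so $AB=BA=Q\cdot\id$. The other terms of the resolution are sums of line bundles. They only contribute powers of $H$, which are killed by projecting to primitive cohomology. Hence $\op{ch}_k^{\op{prim}}(\mathcal O_Z)$ is given by the formula of \Cref{intro thm: KL} applied to this $(A,B)$. I must keep track of the overall sign $(-1)^{t+1}$ and of which parity is $\E_0$, since swapping $A$ and $B$ flips the sign of the trace expression.

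\textbf{Step 2: rewrite the trace as a supertrace.} The alternating difference $\op{tr}(\partial_0A\partial_1B\cdots)-\op{tr}(\partial_0B\partial_1A\cdots)$ is exactly the supertrace $\op{str}(\partial_0D\,\partial_1D\cdots\partial_{2k+1}D)$ on $E$. Each factor
\[
\partial_jD=\sum_i\bigl(\partial_ja_i\,\iota_i+\partial_jb_i\,\varepsilon_i\bigr)
\]
is a linear combination of Clifford generators whose coefficients form the $j$th row of $M_Z$. The supertrace on $\Lambda^\bullet$ of a rank $k+1$ space kills every Clifford monomial except multiples of the top element $\prod_i(\iota_i\varepsilon_i-\varepsilon_i\iota_i)/\!\sim$. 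Expanding the product of the $2k+2$ linear forms then gives the following. The fully antisymmetrized part is $\pm\det(M_Z)$ times a universal constant. The remaining terms are those in which some $\iota_i,\varepsilon_i$ pair recurs via the anticommutator $\{\iota_i,\varepsilon_i\}=1$, and these produce lower-order contractions.

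\textbf{Step 3 (main obstacle).} I must show that these extra contraction terms vanish in $\op{Jac}(Q)$, and that the surviving constant cancels $\tfrac{(-1)^kc_k}{m}$ to give exactly $(-1)^{k+1}$. The relevant identities are
\[
Q_j=\sum_i\bigl(\partial_ja_i\,b_i+a_i\,\partial_jb_i\bigr)\quad\text{and}\quad \sum_jx_j\partial_j(\cdot)=\deg(\cdot)\,(\cdot).
\]
The first expresses $J(Q)$ through the entries of $M_Z$. Euler's formula should produce the factor $m$ and the factor $k$ hidden in $c_k$, presumably by summing over which pair of rows gets contracted. My first attempt will be to handle the terms in which an anticommutator $\{\iota_i,\varepsilon_i\}$ contracts rows $j$ and $j'$ by a cofactor expansion. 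This would show such terms are combinations of $\sum_i(\partial_ja_i\,\partial_{j'}b_i+\partial_{j'}a_i\,\partial_jb_i)$-type minors, and I would then try to express these via $Q_j$ and Euler's relation modulo $J(Q)$. If this direct route proves messy, the fallback is to check the identity in $\op{Jac}(Q)$ by a degree count: the target degree is $(k+1)m-(2k+2)$. I would also test the case $k=0$, where the formula reads $\partial_0a\,\partial_1b-\partial_1a\,\partial_0b$, in order to fix all signs and constants.
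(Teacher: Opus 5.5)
Your Steps 1 and 2 are the paper's route: apply \Cref{thm: KL} to the Koszul factorization, and read $\op{tr}(\partial_0A\partial_1B\cdots)-\op{tr}(\partial_0B\partial_1A\cdots)$ as the supertrace of $\partial_0D\cdots\partial_{2k+1}D$ on $\Lambda^\bullet V$, where each factor is a combination of the $2k+2$ generators with coefficients from one row of $M_Z$. The gap is Step 3, which misidentifies the obstacle. There are no extra contraction terms to kill modulo $J(Q)$. What you are missing is the exact evaluation in \Cref{lem: trace of exterior operators}. Under $\Lambda^\bullet V\cong(\Lambda^\bullet k)^{\otimes(k+1)}$, a product of $2k+2$ generators has zero supertrace unless every pair $\{\varepsilon_i,\iota_i\}$ contributes an even, nonzero number of factors. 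An odd number makes the operator odd on that tensor factor. Zero makes it the identity there, and the identity has supertrace $1-1=0$. With $k+1$ pairs and $2k+2$ factors, each pair therefore contributes exactly two. Since $\varepsilon_i^2=\iota_i^2=0$, these two are one $\varepsilon_i$ and one $\iota_i$. So every monomial with a repeated generator has supertrace zero identically: it reduces to lower-order Clifford monomials, which your own remark in Step 2 already kills. A monomial using each generator once has supertrace $(-1)^{k+1}\op{sgn}(\sigma)$ in the paper's conventions. Summing over the resulting bijections gives $\op{str}(\partial_0D\cdots\partial_{2k+1}D)=(-1)^{k+1}\det(M_Z)$ as an identity of polynomials. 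Neither Euler's formula nor $Q_j=\sum_i(\partial_ja_i\,b_i+a_i\,\partial_jb_i)$ is needed.

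Your plan to have Euler's relation produce $m$ and $k$ and cancel $\frac{(-1)^kc_k}{m}$ cannot work. The identity above is exact, so \Cref{thm: KL} yields $\frac{(-1)^kc_k}{m}(-1)^{k+1}\det(M_Z)$, up to the sign conventions you flagged. Relations in $J(Q)$ cannot rescale a class: $f\equiv\lambda f$ with $\lambda\neq 1$ forces $f\in J(Q)$. The paper reaches exactly $(-1)^{k+1}\det(M_Z)$ only because its proof drops the factor $\frac{(-1)^kc_k}{m}$ when it invokes \Cref{thm: KL}. Like the proof of \Cref{thm: KL} itself, it also does not track the sign $(-1)^{t+1}$ or which parity is $\mathcal E_0$. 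So the formula should be read up to this universal nonzero constant and sign. That is harmless in the applications, where only nonvanishing of monomial coefficients matters. Your $k=0$ calibration would not fix the constant either, since $c_k=\frac{(-1)^{k(k+1)/2}}{k}$ is undefined at $k=0$.
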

The theorem above is obtained by resolving $\mathcal O_Z$ by the aforementioned (Koszul-Tate) resolution \cite{Tate} and applying \Cref{intro thm: KL}. This gives a computable framework to easily test candidates for new algebraic cycles. For example, we are able to answer a question of da Silva \cite[Question 1]{daSilva} in the negative (see \Cref{ex:daSilva}).

There, da Silva aimed to create an algebraic cycle on the degree 33 Fermat fourfold, a case where the Hodge conjecture was open.   Nevertheless, in \Cref{sec: Hodge for deg 33} we prove it.
\begin{theorem}\label{intro thm HC for 33 4}
    The Hodge conjecture is true for the degree 33 Fermat fourfold.
\end{theorem}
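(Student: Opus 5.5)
Since the primitive Hodge classes of the Fermat fourfold $X=Z(\sum_{i=0}^5 x_i^{33})\subseteq\mathbb P^5$ are classified by Katz, Ogus and Shioda \cite[Theorem I]{ShiodaHodge}, we reduce to the following: for each orbit of $\mathbf d\in\mathcal B_{33}^4$ under $(\Z/33\Z)^\times$ and coordinate permutations, exhibit algebraic cycles, or matrix factorizations, whose primitive Chern character has nonzero $x^{\mathbf d}$-coefficient. The automorphism group $\mu_{33}^6/\mu_{33}$ acts on $X$, and the monomials $x^{\mathbf d}$ are exactly its character eigenvectors in the Jacobian ring. By \Cref{prop: g action on MF} and \Cref{prop: isolate algebraics fermat}, once $\op{ch}_2^{\op{prim}}$ of an algebraic class has nonzero coefficient on $x^{\mathbf d}$, averaging its translates against the character isolates $x^{\mathbf d}$ itself as an algebraic class, and the Galois action then covers the whole $(\Z/33\Z)^\times$-orbit. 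So the proof is a finite check over orbit types.

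First, enumerate $\mathcal B_{33}^4$ and split it into types. Most elements are \emph{decomposable}: after permutation, $d_i+d_j=31$ for three disjoint pairs. These come from linear subspaces $x_i=\zeta x_j$ and are known by Shioda. The next type has $d_i+d_j=31$ for one pair, with the remaining four exponents satisfying a 2-dimensional condition of the kind handled by Aoki and Ran. These come from cones, or from products of Fermat curve/surface cycles via Shioda's inductive structure, using $33=3\cdot 11$. The remaining \emph{exceptional} $\mathbf d$, for which no pairing exists, are exactly the cases left open and the target of da Silva's question.

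For the exceptional classes, write $Q=\sum_{i=0}^2 a_ib_i$ with the $a_i$ cutting out a surface $Z$. Use the factorizations $x^{33}+y^{33}=\prod_\zeta(x-\zeta y)$, and for $33=3\cdot 11$ also $x^{33}+y^{33}+z^{33}=\sum$ of terms coming from factoring $u^3+v^3+w^3$ with $u=x^{11}$ and so on. Examples are $a=x_0^{11}+\omega x_1^{11}+\ldots$, or, for three variables, mixtures of $x^{3}$- and $x^{11}$-substitutions. Then compute $\det(M_Z)$ via \Cref{intro thm: Koszul}, expand it in the Jacobian ring modulo $x_i^{32}$, and read off the coefficients on the exceptional monomials. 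When a single complete intersection fails, as \Cref{ex:daSilva} suggests it may, pass to general matrix factorizations via \Cref{intro thm: KL}, for example tensor products of rank-one factorizations of the three-variable pieces. Compute $\op{tr}(\partial_0A\partial_1B\cdots-\partial_0B\partial_1A\cdots)$ using the multiplicativity of the trace formula under tensor products; it should factor into one-, two- and three-variable pieces. The substitution $x\mapsto x^{11}$ or $x\mapsto x^{3}$ pulls back known Hodge classes on the degree 3 and degree 11 Fermat varieties, and the chain rule then produces explicit monomial factors.

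The main obstacle is this last step: finding, for every exceptional orbit type, a factorization whose trace polynomial hits the required monomial with nonzero coefficient, and confirming that nonvanishing in $\op{Jac}(Q)$. I would first try pullbacks along $x_i\mapsto x_i^{3}$ or $x_i\mapsto x_i^{11}$ of tensor products of linear factorizations. The aim is to make every exceptional $\mathbf d$ arise as $(\mathbf e+1)\cdot s-1$ from a decomposable class $\mathbf e$ for degree 11 or degree 3, with $s$ the scaling. Any leftover types would be checked by direct computer evaluation of the determinant.
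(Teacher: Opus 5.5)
\textbf{There is a genuine gap: the construction for the one remaining class is missing, and the tools you propose for it cannot reach it.}

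Your reduction through the Katz--Ogus--Shioda classification matches the paper, and so does your closing step (averaging over $G^4_{33}$ via \Cref{prop: g action on MF} and \Cref{prop: isolate algebraics fermat}, then Galois conjugation). After da Silva's reduction, the only open case is $V(\alpha)$ with $\alpha=(19,7,13,10,28,22)$, i.e.\ the monomial $x_0^{18}x_1^{6}x_2^{12}x_3^{9}x_4^{27}x_5^{21}$, up to Galois conjugation and permutation. Each of your tools misses it:
\begin{enumerate}
\item[(i)] Substituting $x_i\mapsto x_i^{3}$ or $x_i\mapsto x_i^{11}$, or mixtures of these, forces $a_i\equiv 0$ mod $3$ or mod $11$ in that coordinate, by the chain rule. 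But five entries of $\alpha$ are units mod $33$, and this survives multiplication by $(\Z/33\Z)^\times$ and permutation.
\item[(ii)] For a tensor product of factorizations in disjoint groups of variables, the trace in \Cref{thm: KL} is a product of supertraces. A group with an odd number of variables contributes the supertrace of an odd endomorphism, hence $0$; in the Koszul case $M_Z$ is block diagonal with a non-square block. So your three-variable pieces give $0$. The surviving splittings $2+4$ and $2+2+2$ only reach characters with $a_i+a_j=33$ for some pair, and $\alpha$ has no such pair. This is the matrix-factorization shadow of $\alpha$ not being quasi-decomposable, which is why earlier methods missed it.
\item[(iii)] For complete intersections, da Silva's candidate gives $\op{ch}_2^{\op{prim}}=0$ (\Cref{ex:daSilva}), and the paper conjectures that no Koszul factorization works (\Cref{rmk: no Koszul}).
\end{enumerate}
``Check leftover types by computer'' describes a search, not an argument.

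\textbf{The missing idea} is to import cycles from a different variety through a non-diagonal monomial map. The paper uses the cubic fourfold $X_A=Z(F_A)$ with
$F_A=y_0^2y_1+y_1^2y_2+y_2^2y_3+y_3^2y_4+y_4^2y_0+y_5^3$. The last monomial must be $y_5^3$, matching $y_5=x_5^{11}$. Its loop exponent matrix has determinant $2^5+1=33$, and the Shioda map $\phi$ of \eqref{eq: shioda map} satisfies $F_A(\phi(x))=F_{33}^4(x)$.

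By Billi--Grossi--Marquand \cite{BGM}, the algebraic lattice of $X_A$ has full rank $21$ and is generated by cubic scrolls, which are not complete intersections. Hence all of $H^{2,2}_{\op{prim}}(X_A)$, with basis $\op{res}\Omega^{y_iy_jy_k}$, is algebraic. In particular, some scroll has nonzero coefficient on $\op{res}\Omega^{y_0y_4y_5}$.

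Pulling back multiplies numerators by $(x_0\cdots x_5)^{10}$, up to a constant. The twenty monomials $y_iy_jy_k$ go to twenty distinct basis monomials of $\op{Jac}(F_{33}^4)$, and $y_0y_4y_5$ goes to $x_0^{18}x_1^{6}x_2^{12}x_3^{9}x_4^{27}x_5^{21}$. So the pulled-back scroll class has nonzero coefficient there, and \Cref{prop: isolate algebraics fermat} puts $V(\alpha)$ in the span of algebraic classes.

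Without some such external source of cycles for this non-quasi-decomposable class, your outline stops exactly where the problem was open.
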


The proof of \Cref{intro thm HC for 33 4}, reduces to finding some specific new algebraic cycles.  To find them, consider the cubic fourfold $Y$ with  defining equation
$$
x_0^2 x_1 + x_1^2 x_2 + x_2^2 x_3 + x_3^2 x_4 + x_4^2 x_0 + x_3^3.
$$
By work of Billi, Grossi, and Marquand, the algebraic lattice of $Y$ has full rank and is generated by rational normal scrolls \cite{BGM}. The pullbacks of these classes via a special rational map known as a Shioda map (see~\eqref{eq: shioda map 1}) to the degree 33 Fermat fourfold yield the required cycles. These rational normal scrolls are not complete intersections in $\mathbb P^5$. We expect our new algebraic cycles cannot be obtained from complete intersections,  like the other known Hodge classes for Fermat hypersurfaces (see~\Cref{rmk: no Koszul}).

\subsection{Acknowledgments}
The authors would like to thank Michael Brown, Christine Berkesch, and Mark Shoemaker for discussions relating to this work. The first author was supported by the NSF under DMS award numbers 2302262 and 2412039. The second author was supported by the UKRI Future Leaders Fellowship MR/T01783X/1, its renewal MR/Y033841/1, and the EPSRC Mathematical Sciences Small Grant EP/Y033574/1. He also acknowledges the hospitality of the Sydney Mathematical Research Institute and the University of Minnesota where portions of this research were performed. 

\subsection{AI Declaration}
Gemini and Claude were used to answer foundational questions and search for references. All logical arguments, proofs and writing are the authors' own. 
The authors take full responsibility for the correctness of all results.

\section{A \v{C}ech formula for the Chern character of a coherent sheaf on a hypersurface}\label{sec: Cech}

Take a smooth degree $m$ hypersurface $X = Z(Q)\subseteq \mathbb{P}^{n+1}$.
\begin{definition}
A coherent sheaf $\mathcal{F}$ on $X$ is called an \emph{arithmetically Cohen--Macaulay (ACM) sheaf} if it is locally free and has no intermediate vanishing cohomology, i.e.,
\[
H^i(X, \mathcal{F}(k)) = 0 \quad \text{for all } 1 \le i \le n-1 \text{ and all } k \in \mathbb{Z}.
\]
Equivalently, its graded section module $\Gamma_*(\mathcal{F}) = \bigoplus_{k \in \mathbb{Z}} H^0(X, \mathcal{F}(k))$ is a maximal Cohen--Macaulay $R$-module over the coordinate ring $R$.
\end{definition}

Given an arithmetically Cohen-Macaulay sheaf $\mathcal{F}$ on $X$, there exists a 2-periodic resolution

\begin{equation}\label{def: FF}
\cdots \stackrel{A}{\longrightarrow} \mathcal{E}_{0}(-2m) \stackrel{B}{\longrightarrow} \mathcal{E}_1(-m)\stackrel{A}{\longrightarrow} \mathcal{E}_0(-m) \stackrel{B}{\longrightarrow} \mathcal{E}_1 \stackrel{A}{\longrightarrow} \mathcal{E}_0 \stackrel{\pi}{\longrightarrow} \mathcal{F} \longrightarrow 0,\end{equation}
where 
$$
\mathcal{E}_0 = \bigoplus_{\ell=1}^r \O(d_\ell), \quad \mathcal{E}_{1} = \bigoplus_{\ell=1}^r \O(e_\ell), 
$$
 $A\circ B = Q \cdot \op{id}$ and $B \circ A(-d) = Q \cdot \op{id}$. Note that we have an isomorphism $\mathcal{F} \cong \coker A$. 

 Since $X$ is smooth, we can define an open cover $\mathcal{U}$ on $X$ to be
 \begin{equation}\label{the refined cover}
 \mathcal{U} := \{ U_{i,t} = D(x_t\partial_{i} Q) = D(x_t) \cap D(\partial_iQ)\ | \ i,t \in \{0,\dots, n+1\}\}.
 \end{equation}
Since the partial derivatives $\partial_iQ$ appear frequently we shorten them to $Q_i := \partial_iQ$ following \cite{CG}. 

\begin{lemma} \label{lem: splitting}
On each $U_{i,t}$, there is a splitting 
$$
s_{i}: \mathcal{F}|_{U_{i,t}} \to \E_0|_{U_{i,t}}
$$
so that $\pi \circ s_{i} = \id$ and $$s_{i} \circ \pi = \frac{\partial_i A B}{Q_i}$$ is an idempotent.
\end{lemma}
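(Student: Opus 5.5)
The plan is to differentiate the matrix factorization identity and then construct the splitting by hand. Differentiating $AB = Q\cdot\id$ with respect to $x_i$ gives
\[
\partial_i A\, B + A\, \partial_i B = Q_i \cdot \id .
\]
On $U_{i,t}$ the function $Q_i$ is invertible, so I set
\[
e_i := \frac{\partial_i A\, B}{Q_i} \in \End(\E_0|_{U_{i,t}}).
\]
The identity then says $e_i = \id - \frac{A\,\partial_i B}{Q_i}$. Degrees are compatible: $B$ maps $\E_0(-m)\to \E_1$, $\partial_i A$ has degree one less than $A$, and dividing by $Q_i$, which has degree $m-1$, gives a degree-zero endomorphism of $\E_0$.

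Next I show $e_i$ is idempotent on $X$, where $Q=0$. Using $BA = Q$, which vanishes on $X$,
\[
e_i^2 = \frac{\partial_i A\,(B\,\partial_i A)\, B}{Q_i^2}.
\]
Differentiating $BA = Q$ gives $B\,\partial_i A = Q_i - \partial_i B\, A$. Substituting,
\[
e_i^2 = \frac{\partial_i A\,B}{Q_i} - \frac{\partial_i A\, \partial_i B\, A B}{Q_i^2},
\]
and the last term is $\partial_i A\,\partial_i B\, Q/Q_i^2$, which is $0$ on $X$. Hence $e_i^2 = e_i$.

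Now I relate $e_i$ to $\pi$ and $\operatorname{im} A = \ker \pi$.
\begin{itemize}
\item From $e_i = \id - A\,\partial_i B/Q_i$ we get $\pi \circ e_i = \pi$, because $\pi A = 0$.
\item On $\operatorname{im} A$, $e_i$ vanishes: $e_i A = \partial_i A\,BA/Q_i = 0$ on $X$. So $\ker\pi \subseteq \ker e_i$.
\item Conversely, if $e_i v = 0$ then $v = A(\partial_i B\, v/Q_i) \in \operatorname{im} A$.
\end{itemize}
Thus $\ker e_i = \operatorname{im} A = \ker \pi$ (exactness of the 2-periodic resolution at $\E_0$ gives $\ker \pi = \operatorname{im} A$). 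Since $\pi$ is surjective, $e_i$ descends to a map
\[
s_i \colon \F|_{U_{i,t}} \to \E_0|_{U_{i,t}}, \qquad s_i(\pi v) := e_i v .
\]
It is well defined because $e_i$ kills $\ker\pi$. By construction $s_i \circ \pi = e_i$, and $\pi \circ s_i \circ \pi = \pi e_i = \pi$, so $\pi\circ s_i = \id$ by surjectivity of $\pi$.

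The main point needing care is that everything happens as sheaf maps on $X$ rather than on $\mathbb{P}^{n+1}$, so that each use of $AB=BA=Q$ becomes $0$. The formulas involve lifts of $A,B$ to $S$, but the resulting endomorphisms of $\E_0|_X$ are independent of the lift modulo $Q$ up to terms that vanish. Indeed, changing $A$ to $A+QC$ changes $\partial_iA$ by $Q_iC + Q\,\partial_iC$, and this contributes $CB$ to $e_i$ on $X$, which must be checked to vanish on $\E_0$. It is cleaner simply to fix the lifts once, since the lemma only needs some splitting. Note that the invertibility of $Q_i$ is the only place where we use $D(\partial_i Q)$; the factor $x_t$ in the cover is irrelevant here.
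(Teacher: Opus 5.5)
Your proof is correct and is essentially the paper's argument: $\partial_iA\,B/Q_i$ kills $\operatorname{im}A$ because $BA=Q$ vanishes on $X$, so it descends to $s_i$ on $\operatorname{coker}A$, and idempotence follows by substituting $B\,\partial_iA=Q_i-\partial_iB\,A$. Your derivation of $\pi\circ s_i=\mathrm{id}$ from $\partial_iA\,B+A\,\partial_iB=Q_i$ makes explicit a step the paper leaves unstated; the only inaccuracy is in your unneeded closing aside, since $s_i$ does depend on the lift (replacing $(A,B)$ by $(AH,H^{-1}B)$ with $H=\mathrm{id}+QV$ changes $\partial_iA\,B/Q_i$ on $X$ by $AVB$, which $\pi$ kills but which need not vanish), so fixing the lifts once, as you and the paper do, is the right choice.
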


\begin{proof}
Since $BA = Q$, the map $\frac{\partial_i A B}{Q_i}: \mathcal E_0 \to \mathcal E_0$ is trivial on the image of $A$ and hence descends to the cokernel $\mathcal F$.  The descended map is $s_i$.

To check that $\frac{\partial_i A B}{Q_i}$ is idempotent, simply compute
$$
    \frac{\partial_i A B}{Q_i}  \cdot  \frac{\partial_i A B}{Q_i} =     \frac{\partial_i A (B\partial_i A) B}    {Q_i^2}  = \frac{\partial_i A (\partial_iQ - \partial_iBA) B}    {Q_i^2}  =  \frac{\partial_i A B}{Q_i}. \qedhere
$$
\end{proof}
As $U_{i,t} \subseteq D(x_t)$, we denote the restriction of the standard trivialization of $\mathcal E_0$ on $D(x_t)$ to $U_{i,t}$ by $
 \psi_t: \mathcal E_0|_{U_{i,t}} \to \mathcal O_{U_{i,t}}^{\oplus n}$.
This allows us to define the standard local algebraic connections
$$
\nabla^{\E_0}_t := \psi_t^{-1}\circ d\circ \psi_t: \E_0|_{U_{i,t}} \to \E_0|_{U_{i,t}} \otimes \Omega^1_{U_{i,t}}.
$$

\noindent The local splitting of $\mathcal F$ from \Cref{lem: splitting}  induces a local algebraic connection on $\mathcal F$ as well.
\begin{proposition}\label{prop: connection}
On each affine open, we have a local algebraic connection 
\[
\nabla_{i,t} := (\pi\otimes \id)\circ \nabla^{\E_0}_t\circ s_{i}: \mathcal{F}|_{U_{i,t}} \to \Omega^1_{U_{i,t}} \otimes \mathcal{F}|_{U_{i,t}}.
\]
\end{proposition}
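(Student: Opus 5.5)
We need to show that $\nabla_{i,t}$ is a connection. It is evidently additive and $\mathcal{O}$-linear in the coefficients appearing after differentiation, so what has to be checked is the Leibniz rule
\[
\nabla_{i,t}(f\sigma) = df\otimes \sigma + f\,\nabla_{i,t}(\sigma)
\]
for local functions $f$ on $U_{i,t}$ and local sections $\sigma$ of $\mathcal F|_{U_{i,t}}$. Algebraicity is automatic. Every map in the composite is algebraic on the affine $U_{i,t}$: $\psi_t$ is the standard trivialization, $\nabla^{\E_0}_t$ is $d$ in that trivialization, $\pi$ is a map of sheaves, and $s_i$ is given by the regular matrix expression $\partial_iAB/Q_i$ on $D(Q_i)$ from \Cref{lem: splitting}.

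Here is the computation I would carry out.

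First, $s_i$ is $\mathcal O$-linear, since it is the descent of an $\mathcal O$-linear endomorphism of $\E_0$. Hence $s_i(f\sigma)=f\,s_i(\sigma)$.

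Second, $\nabla^{\E_0}_t=\psi_t^{-1}\circ d\circ\psi_t$ satisfies Leibniz because $d$ does on $\mathcal O^{\oplus r}$ and $\psi_t$ is $\mathcal O$-linear. This gives
\[
\nabla^{\E_0}_t(f s_i\sigma)=df\otimes s_i\sigma + f\,\nabla^{\E_0}_t(s_i\sigma).
\]

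Third, apply $\pi\otimes\id$, which is $\mathcal O$-linear. The first term becomes $df\otimes \pi s_i\sigma = df\otimes\sigma$, using $\pi\circ s_i=\id$ from \Cref{lem: splitting}. The second term becomes $f\,\nabla_{i,t}(\sigma)$. Together these give the Leibniz rule.

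I should also check that the target is correct: $(\pi\otimes\id)$ maps $\E_0\otimes\Omega^1$ to $\mathcal F\otimes\Omega^1$, which agrees with $\Omega^1\otimes\mathcal F$ after the obvious swap.

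The only point needing care, and it is mild, is that $s_i$ is well defined as a map out of $\mathcal F=\coker A$. That was already settled in \Cref{lem: splitting} using $BA=Q$, together with the fact that $Q$ vanishes on $X$, so $\partial_iAB A/Q_i = \partial_i A\, Q/Q_i=0$ on $X$. One might worry that the twists $\E_0(-m)$ and $\E_1$ interfere with the matrix expression. They do not, because on $D(x_t)$ everything is trivialized by $\psi_t$, and the degrees match: $\partial_iAB/Q_i$ has degree $0$ as a map $\E_0\to\E_0$. No step is a genuine obstacle; the proposition is essentially the standard fact that a splitting of a surjection from a bundle with a connection induces a connection on the quotient.
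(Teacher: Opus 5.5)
Your proposal is correct and takes the same approach as the paper. The paper just says the result is a formal consequence of \Cref{lem: splitting} and the fact that $\nabla^{\E_0}_t$ is an algebraic connection; your Leibniz-rule computation, using the $\mathcal O$-linearity of $s_i$ and $\pi$ together with $\pi\circ s_i=\id$, is exactly that formal consequence written out.
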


\begin{proof}
  This is a formal consequence of \Cref{lem: splitting} and the fact that $\nabla_t^{\mathcal E_0}$ is an algebraic connection.
\end{proof}
Given a local algebraic connection, the Atiyah class can be defined as follows.
\begin{definition} Given an open cover $ \{U_i \ | \ i \in \{1, \dots, N\}\}$ of an algebraic variety $X$ and vector bundle $\F$ equipped with local connections $\nabla_{U_i}: \F \to \F \otimes \Omega_{U_i}^1$, 
the \newterm{Atiyah class} $\op{at}(\mathcal{F})$ is the \v{C}ech cocycle given locally on $U_{ij} := U_i \cap U_j$ by
$$
\op{at}(\mathcal{F})_{ij} := \nabla _i -\nabla_j \in \Gamma(U_{ij}, \Omega_{U_{ij}}^1 \otimes \End(\mathcal{F})).
$$
\end{definition}

\begin{example}\label{ex: Atiyah of direct sum}
Let $\{ D(x_t) \ | \ t \in \{0, ..., n+1\}\}$ be the standard open cover of $\mathbb P^{n+1}$.  Let $\mathcal E_0 := \bigoplus_{\ell=1}^r \mathcal O(d_\ell)$.  Using the standard connection
\begin{align*}
\op{at}(\E_0)_{tu}(a_1, \dots, a_r) & =\nabla^{\E_0}_t(a_1,\dots, a_r) - \nabla^{\E_0}_u(a_1, \dots, a_r) \\
& = \psi_t^{-1}\circ d\circ \psi_t (a_1, \dots, a_r) - \psi_u^{-1}\circ d\circ \psi_u (a_1, \dots, a_r) \\
& = \psi_t^{-1}d(x_t^{d_1}a_1, \dots, x_t^{d_r}a_r) -\psi_u^{-1}d(x_u^{d_1}a_1, \dots, x_u^{d_r}a_r) \\
& = \psi_t^{-1}(d_1x_t^{d_1-1}a_1, \dots, d_rx_t^{d_r-1}a_r)dx_t+\psi_t^{-1}(x_t^{d_1}da_1, \dots, x_u^{d_r}da_r) \\&\quad -  (\psi_u^{-1}(d_1x_u^{d_1-1}a_1, \dots, d_rx_u^{d_r-1}a_r)dx_u+\psi_u^{-1}(x_u^{d_1}da_1, \dots, x_u^{d_r}da_r)) \\
&= \Lambda_{\mathbf{d}}\left(\frac{dx_t}{x_t} - \frac{dx_u}{x_u} \right)(a_1, \dots, a_r),
\end{align*}
where $\Lambda_{\mathbf{d}} :=\op{diag}(d_1, \dots, d_r)$ is the diagonal matrix with entries $(\Lambda_{\mathbf{d}})_{ij} = \delta_{ij}d_i$.
In other words,
\begin{equation} \label{eq: ambient cxn}
    \op{at}(\E_0)_{tu} = \Lambda_{\mathbf{d}}\left(\frac{dx_t}{x_t} - \frac{dx_u}{x_u} \right).
\end{equation}

\end{example}

Recall that our open cover has the form $\mathcal U = \{U_{i,t} \ | \ i,t \in \{0, \dots, n+1\}$. We use the notation $\op{at}(\mathcal{F})_{ij,tu}$ for the Atiyah class on the open set $U_{ij, tu}:= U_{i,t} \cap U_{j, u}$.  A reduced expression for this \v{C}ech cocycle is  given in the following proposition.

\begin{proposition}\label{prop: atiyah class}
    The Atiyah class $\op{at}(\mathcal{F}) \in H^1(X, \End(\mathcal F) \otimes \Omega_X^1)$ of $\mathcal{F}$ is given by the \v{C}ech cocycle 
    $$
\op{at}(\mathcal{F})_{ij,tu} = (\pi \otimes \id )\left( \Lambda_{\mathbf{d}} \frac{dx_t }{x_t} \right) s_i - (\pi\otimes \id)\left(\Lambda_{\mathbf{d}}\frac{d x_{u}}{x_{u}}\right) s_{j}  -    (\pi\otimes \id) \circ dA\left(\frac{\partial_{i}B}{Q_i} \right)s_{j}.
    $$

\end{proposition}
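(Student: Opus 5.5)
Directly from the definition, $\op{at}(\mathcal F)_{ij,tu} = \nabla_{i,t}-\nabla_{j,u}$ with $\nabla_{i,t} = (\pi\otimes\id)\circ\nabla^{\E_0}_t\circ s_i$. The plan is to rewrite $\nabla^{\E_0}_u\circ s_j$ in terms of $\nabla^{\E_0}_t$, so that both terms use the same trivialization, and then to compute $(\pi\otimes\id)\nabla^{\E_0}_t(s_i-s_j)$ using the idempotent formula of \Cref{lem: splitting}.

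\textbf{Step 1: changing trivializations.} By \Cref{ex: Atiyah of direct sum}, on the overlap we have
\[
\nabla^{\E_0}_u = \nabla^{\E_0}_t - \Lambda_{\mathbf d}\Big(\frac{dx_t}{x_t}-\frac{dx_u}{x_u}\Big).
\]
Substituting this into $\nabla_{j,u}$ gives
\[
\nabla_{i,t}-\nabla_{j,u} = (\pi\otimes\id)\nabla^{\E_0}_t(s_i-s_j) + (\pi\otimes\id)\Lambda_{\mathbf d}\frac{dx_t}{x_t}s_j - (\pi\otimes\id)\Lambda_{\mathbf d}\frac{dx_u}{x_u}s_j.
\]
Comparing with the target formula, the claim reduces to the identity
\[
(\pi\otimes\id)\nabla^{\E_0}_t(s_i-s_j) = (\pi\otimes\id)\Lambda_{\mathbf d}\frac{dx_t}{x_t}(s_i-s_j) - (\pi\otimes\id)\, dA\,\frac{\partial_iB}{Q_i}\,s_j.
\]

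\textbf{Step 2: rewriting the difference of splittings.} Since $\pi\circ(s_i-s_j)=0$, the image of $s_i-s_j$ lies in $\op{im}A$. To make this explicit, use $\pi s_j = \id$ to write $s_i - s_j = (s_i\pi - \id)s_j$. Then compute
\[
s_i\pi - \id = \frac{\partial_iA\,B}{Q_i} - \id = \frac{\partial_iA\,B - Q_i}{Q_i} = -\frac{A\,\partial_iB}{Q_i},
\]
where the last equality differentiates $AB=Q$ in $x_i$ to get $\partial_iA\,B + A\,\partial_iB = Q_i$. Hence
\[
s_i-s_j = -A\,\frac{\partial_iB}{Q_i}\,s_j.
\]

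\textbf{Step 3: applying the trivialized connection.} In the trivialization $\psi_t$, the homogeneous matrix $A\colon \E_1\to\E_0$ becomes $x_t^{\mathbf d}A\,x_t^{-\mathbf e}$, so the Leibniz rule gives
\[
\nabla^{\E_0}_t(A\phi) = dA\,\phi + \Big(\Lambda_{\mathbf d}\frac{dx_t}{x_t}\Big)A\phi - A\Big(\Lambda_{\mathbf e}\frac{dx_t}{x_t}\Big)\phi + A\,\nabla^{\E_1}_t\phi.
\]
Applying $\pi\otimes\id$ kills every term beginning with $A$, since $\pi A=0$. With $\phi = -\frac{\partial_iB}{Q_i}s_j$ and $A\phi = s_i-s_j$, what remains is
\[
(\pi\otimes\id)\Lambda_{\mathbf d}\frac{dx_t}{x_t}(s_i-s_j) - (\pi\otimes\id)\,dA\,\frac{\partial_iB}{Q_i}\,s_j,
\]
which is exactly the identity required in Step 1.

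The main obstacle is Step 3, the bookkeeping of twists. Here $dA$ must be read as the naive entrywise differential $\sum_\ell \partial_\ell A\,dx_\ell$ of the polynomial matrix. The degree-correction terms must then be tracked carefully, including the fact that $\partial_iB/Q_i$ has degree zero when viewed as a map into $\E_1$, so that the $\Lambda_{\mathbf e}$ term enters only behind $A$ and is killed by $\pi$. The Euler identity is not needed, since the $\Lambda_{\mathbf d}$ terms absorb the degree discrepancy.
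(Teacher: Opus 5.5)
Your argument is correct and is essentially the paper's proof: both split $\nabla_{i,t}-\nabla_{j,u}$ into a change-of-trivialization term handled by \eqref{eq: ambient cxn} plus a term $(\pi\otimes\id)\circ\nabla^{\E_0}\circ(s_i-s_j)$, and both evaluate the latter by using that $s_i-s_j$ factors through $A$ (via $\partial_iA\,B+A\,\partial_iB=Q_i$), so that $\pi A=0$ leaves only the $dA$ and $\Lambda_{\mathbf d}$ contributions. The differences are only bookkeeping---you fix $\nabla^{\E_0}_t$ and $s_j$ where the paper fixes $\nabla^{\E_0}_u$ and $s_i$, and you factor $s_i-s_j=-A\frac{\partial_iB}{Q_i}s_j$ directly and apply Leibniz with $\nabla^{\E_1}_t$, whereas the paper lifts to a section $g$ of $\E_0$ and differentiates $\frac{\partial_iAB}{Q_i}-\frac{\partial_jAB}{Q_j}=-\frac{A\partial_iB\partial_jAB}{Q_iQ_j}$ (an identity valid on $X$)---so your route is slightly cleaner but not different in substance.
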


\begin{proof}
We write 
    \begin{align}
        \nabla_{i, t} - \nabla_{j,u} &= (\pi\otimes \id)\circ \nabla^{\E_0}_t\circ s_{i} - (\pi\otimes \id)\circ \nabla^{\E_0}_{u}\circ s_{j} \\
        &= (\pi\otimes \id)\circ (\nabla^{\E_0}_t-\nabla^{\E_0}_{u})\circ s_{i} + (\pi\otimes \id)\circ \nabla^{\E_0}_{u}\circ (s_{i}- s_{j}). \label{eq: two summands in atiyah}
    \end{align}
By \eqref{eq: ambient cxn}, the first summand in ~\eqref{eq: two summands in atiyah} is
\begin{equation}\begin{aligned}\label{atiyah 1}
    (\pi\otimes \id)\circ (\nabla^{\E_0}_t-\nabla^{\E_0}_{u})\circ s_{i} &= (\pi \otimes \id ) \left( \Lambda_{\mathbf{d}} \left(\frac{dx_t }{x_t} - \frac{d x_{u}}{x_{u}}\right)\right) s_i.
\end{aligned}\end{equation}

Now turn to the second summand in ~\eqref{eq: two summands in atiyah}.

On the open set $U_{i,t}\cap U_{j,u}$, we write any section $\pi g \in \F(U_{ij,tu})$, where $g \in \E_0(U_{ij, tu})$. We then compute
\begin{equation}\begin{aligned}\label{atiyah 2}
    (\pi\otimes \id)\circ \nabla^{\E_0}_{u}\circ (s_{i}- & s_{j})(\pi g) = (\pi\otimes \id)\circ (\psi_{u}^{-1}\otimes \id)\circ d\circ \psi_{u}\circ \left(\frac{\partial_{i}AB}{Q_i} -\frac{\partial_{j}AB}{Q_j} \right) (g) \\
   &= (\pi\otimes \id)\circ (\psi_{u}^{-1}\otimes \id)\circ d( \psi_{u})\circ \left(\frac{\partial_{i}AB}{Q_i} -\frac{\partial_{j}AB}{Q_j} \right) (g) \\
   &\qquad +  (\pi\otimes \id)\circ (\psi_{u}^{-1}\otimes \id)\circ ( \psi_{u}\otimes \id)\circ d\left(\frac{\partial_{i}AB}{Q_i} -\frac{\partial_{j}AB}{Q_j} \right) (g)  \\
   &\qquad + (\pi\otimes \id)\circ (\psi_{u}^{-1}\otimes \id)\circ  \left(\psi_{u}\circ \left(\frac{\partial_{i}AB}{Q_i} -\frac{\partial_{j}AB}{Q_j} \right)  \otimes \id \right)(dg) \\
   &= (\pi \otimes \id) \left( \Lambda_{\mathbf{d}}  \frac{d x_{u}}{x_{u}}\right)( s_i - s_{j})(\pi g)   +  (\pi\otimes \id)\circ d\left(\frac{\partial_{i}AB}{Q_i} -\frac{\partial_{j}AB}{Q_j} \right) (g)  \\
   &\qquad + (\pi\circ (s_i - s_{j})\circ \pi)\otimes \id)(dg) \\
     &= (\pi \otimes \id) \left( \Lambda_{\mathbf{d}}  \frac{d x_{u}}{x_{u}}\right)( s_i - s_{j})(\pi g)   +  (\pi\otimes \id)\circ d\left(\frac{\partial_{i}AB}{Q_i} -\frac{\partial_{j}AB}{Q_j} \right) (g).
\end{aligned} 
\end{equation}
Focusing on the latter summand, we compute 
\begin{equation}
\frac{\partial_{j}AB}{Q_j} - \frac{\partial_iAB}{Q_i}  =-\frac{A\partial_{i}B\partial_{j}AB}{Q_iQ_j}.
\end{equation}
Continuing, we obtain
\begin{equation}\begin{aligned}\label{atiyah 3}
(\pi\otimes \id)\circ d&\left(\frac{\partial_{i}AB}{Q_i} -\frac{\partial_{j}AB}{Q_j} \right) (g)  \\
	&= -   (\pi\otimes \id) \circ dA\left(\frac{\partial_{i}B\partial_{j}AB}{Q_iQ_j} \right)(g) + (\pi\otimes \id) \circ (A\otimes \id)d\left(\frac{\partial_{i}B\partial_{j}AB}{Q_iQ_j} \right)(g)\\
    &= - (\pi\otimes \id) \circ dA\left(\frac{\partial_{i}B\partial_{j}AB}{Q_iQ_j} \right)(g)\\
   &= -(\pi\otimes \id) \circ dA\left(\frac{\partial_{i}B}{Q_i} \right)s_{j} (\pi g)
\end{aligned}\end{equation}
where the third equality follows from $\pi\circ A = 0$ and the fourth from Lemma~\ref{lem: splitting}.
Lastly, we use ~\eqref{atiyah 1}, \eqref{atiyah 2}, and \eqref{atiyah 3} to conclude
\begin{align*}
\nabla _{i,t} -\nabla_{j,u}    &= (\pi \otimes \id ) \left( \Lambda_{\mathbf{d}} \left(\frac{dx_t }{x_t} - \frac{d x_{u}}{x_{u}}\right)\right) s_i  + (\pi \otimes \id) \left( \Lambda_{\mathbf{d}}  \frac{d x_{u}}{x_{u}}\right)( s_i - s_{j})\\
	&\qquad   +   (\pi\otimes \id) \circ dA\left(\frac{\partial_{i}B}{Q_i} \right)s_{j}  \\
	&=  (\pi \otimes \id )\left( \Lambda_{\mathbf{d}} \frac{dx_t }{x_t} \right) s_i - (\pi\otimes \id)\left(\Lambda_{\mathbf{d}}\frac{d x_{u}}{x_{u}}\right) s_{j}  -    (\pi\otimes \id) \circ dA\left(\frac{\partial_{i}B}{Q_i} \right)s_{j}.
\end{align*}
\end{proof}

We define the Chern character as the trace of the exponential of the Atiyah class, following his classical work \cite{Atiyah}.

\begin{definition}The \newterm{Chern character} of a coherent sheaf $\mathcal{G}$ is  
$$
\op{ch}(\mathcal{G}) = \op{tr}(\op{exp}(\op{at}(\mathcal{G}))) \in \bigoplus_k H^k(X, \Omega^k_X).
$$
Write $\op{ch}(\mathcal{G})  = \sum_k \op{ch}_k(\mathcal{G})$, where $\op{ch}_k(\mathcal{G}) \in H^k(X, \Omega^k_X)$. In this case, we call $\op{ch}_k(\mathcal{G})$ the \newterm{$k$th Chern character} of $\mathcal{G}$.
\end{definition}

\begin{example}\label{ex: H} From~\Cref{ex: Atiyah of direct sum}, we have
    \[
    \op{ch}(\mathcal O(1)) = \op{exp}(H) 
    \]
    where $H$ is the \v{C}ech cocycle given by $\left(\tfrac{dx_i}{x_i} - \tfrac{dx_j}{x_j}\right)_{ij} = \op{at}((\mathcal O(1))$.
\end{example}

Using the structure of the Atiyah class in our setting, we can decompose the Chern character into products of two distinguished cohomology classes represented by the following \v{C}ech cocycles. Write
$$
\Theta :=  \left(\left(\Lambda_{\mathbf{d}}\frac{\partial_{i}A}{x_{t}Q_i}dx_{t}  - \Lambda_{\mathbf{d}}\frac{\partial_{j}A}{x_uQ_j} dx_{u} \right)B \right)_{ij,tu} \in H^1(X, \End(\mathcal{\E}_0) \otimes \Omega_X^1)
$$
and 
$$
\Xi := \left(\frac{dA \partial_{i}B\partial_{j}AB}{Q_i Q_j}\right)_{ij, tu} \in H^1(X, \End(\mathcal{\E}_0) \otimes \Omega_X^1).
$$
\begin{remark}
Observe that $\Xi$ does not depend on the choices of $t, u$. 
Hence, as a cohomology class, we may also regard $\Xi$ as a \v{C}ech cocycle in the less refined open cover 
\begin{equation}\label{eq: U tilde cover}
\widetilde{\mathcal{U}} = \{ D(\partial_j Q) \ | \ j \in \{0, \dots, n+1\}\}
\end{equation}
of $X$.  We do this frequently. Note that this is a cover of both $X$ and $\mathbb P^{n+1}$, as $X$ is smooth.  
\end{remark}

We can check that $\Theta$ and $\Xi$ are indeed $d_{\check{C}}$-closed. From the definitions, we obtain
\begin{equation}\label{d cech T1}
d_{\check{C}} (\Theta) = 0
\end{equation}
and on the cover $\widetilde{\mathcal{U}}$, we get
\begin{equation}\label{d cech T2}
d_{\check{C}} (\Xi) = \left( \frac{dQ}{Q_{i}Q_{j}Q_{k}}\cdot B\partial_{i}A\partial_{j}B \partial_{k}A\right)_{ijk}
\end{equation}
which vanishes on $X$ as $dQ \equiv 0$ on $X$.

\begin{theorem}\label{thm: chern character}
    The $k$th Chern character $\op{ch}_k(\F)$ of $\mathcal{F}$ is the \v{C}ech cocycle given by
$$
\op{ch}_k(\mathcal{F}) = \frac{1}{k!} \op{tr}((\Theta - \Xi)^k).
$$
\end{theorem}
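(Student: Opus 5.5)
The plan is to lift the Atiyah cocycle of $\F$ from \Cref{prop: atiyah class} to an $\End(\E_0)\otimes\Omega^1_X$-valued cocycle. I will then show that taking traces of its powers on $\E_0$ gives the same answer as on $\F$. Throughout, everything is restricted to $X$, so $AB = BA = Q\cdot\id = 0$ as maps of sheaves on $X$.

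Write $e_i := s_i\circ\pi = \frac{\partial_iAB}{Q_i}$, the idempotent of \Cref{lem: splitting}. On $U_{ij,tu}$ define
$$
\widetilde{a}_{ij,tu} := \Lambda_{\mathbf d}\frac{dx_t}{x_t}e_i - \Lambda_{\mathbf d}\frac{dx_u}{x_u}e_j - dA\,\frac{\partial_iB}{Q_i}\,e_j .
$$
Substituting $e_i=\partial_iAB/Q_i$ shows that $\widetilde a_{ij,tu}$ is exactly $(\Theta-\Xi)_{ij,tu}$. Since $\pi\circ s_i=\id$, \Cref{prop: atiyah class} gives the intertwining relation
$$
\op{at}(\F)_{ij,tu}\circ\pi = (\pi\otimes\id)\circ(\Theta-\Xi)_{ij,tu}.
$$

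Next I pass to $k$-fold Čech cup products. The product of $\End$-valued cocycles is composition on multiple overlaps together with wedge product of forms. Because $\pi$ is $\mathcal O$-linear and commutes with wedging by forms, the relation iterates to
$$
\op{at}(\F)^k\circ\pi = (\pi\otimes\id)\circ(\Theta-\Xi)^k
$$
at the level of cochains, on each $(k+1)$-fold overlap. It therefore suffices to prove a local linear-algebra fact on each overlap. Suppose $\phi$ is an $\Omega^k$-valued endomorphism of $\F$ and $\widetilde\phi$ one of $\E_0$ with $\phi\pi=\pi\widetilde\phi$. Then $\widetilde\phi$ preserves $K:=\ker\pi$, and
$$
\op{tr}_{\E_0}(\widetilde\phi)=\op{tr}_\F(\phi)+\op{tr}_K(\widetilde\phi|_K).
$$
Here I use that $\F$ is locally free, so $0\to K\to\E_0\to\F\to0$ is a short exact sequence of vector bundles. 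Traces of graded endomorphisms are then additive, and no sign issues arise because $\phi$ is even in the relevant sense: I only compose endomorphisms and multiply by a common form.

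The key remaining step is to show $\widetilde a|_K=0$, hence $(\Theta-\Xi)^k|_K=0$. This is the point I expect to need the most care. Locally $K$ is the image of $A\colon\E_1\to\E_0$. Indeed, on $X$ the sequence $\E_1\xrightarrow{A}\E_0\to\F\to 0$ is exact by the resolution \eqref{def: FF}. So I need $(\Theta-\Xi)\circ A=0$ on $X$. Both $\Theta$ and $\Xi$ end in the factor $B$ on the right, so the composite with $A$ ends in $BA=Q\cdot\id$, which vanishes on $X$.

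Two subtleties need checking. First, the $d$ in $dA$ is not applied to the following $A$: $\Xi\circ A = dA\,\partial_iB\,\partial_jA\,(BA)/(Q_iQ_j)$, so it is harmless. Second, $\widetilde a$ is formed as a composite of sections rather than by differentiating a product, so no Leibniz correction terms appear. Granting this, $\op{tr}_K$ vanishes and
$$
\op{tr}_\F(\op{at}(\F)^k)=\op{tr}_{\E_0}((\Theta-\Xi)^k)
$$
as Čech cochains. Dividing by $k!$ and using the definition $\op{ch}=\op{tr}\exp(\op{at})$ gives the theorem. Finally, \eqref{d cech T1} and \eqref{d cech T2} confirm that the resulting cochain is a cocycle on $X$.
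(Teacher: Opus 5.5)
\textbf{Verdict: correct, and close to the paper's proof.} Your deduction is valid, and its core is the paper's. Precomposing the cochain of Proposition~\ref{prop: atiyah class} with $\pi$ turns every $s_i$ into the idempotent $s_i\circ\pi=\partial_iAB/Q_i$; this identity, not $\pi\circ s_i=\id$, is what gives your intertwining relation. The lifted factor is therefore exactly $(\Theta-\Xi)_{ij,tu}$, and the cup product is handled factor by factor.

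\textbf{Where the routes differ.} The only real difference is how the trace is moved from $\F$ to $\E_0$. Write $\op{at}(\F)_{ij,tu}=(\pi\otimes\id)\circ\beta_{ij,tu}$, where $\beta_{ij,tu}\colon\F\to\E_0\otimes\Omega^1$ is the bracket in Proposition~\ref{prop: atiyah class}, so that $(\Theta-\Xi)_{ij,tu}=\beta_{ij,tu}\circ\pi$. On a $(k+1)$-fold overlap the paper applies cyclic invariance once,
$\op{tr}_\F(\pi\beta_{01}\pi\beta_{12}\cdots\pi\beta_{k-1,k})=\op{tr}_{\E_0}(\beta_{01}\pi\,\beta_{12}\pi\cdots\beta_{k-1,k}\pi)$,
and is done. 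You instead use additivity of traces on $0\to K\to\E_0\to\F\to0$ and then kill the $K$-block. This is slightly more flexible, since it would handle any lift intertwining with $\pi$. Here, though, your separate check via $BA=Q\cdot\id\equiv 0$ on $X$ is unnecessary: each factor $\beta_{ij,tu}\circ\pi$ kills $\ker\pi$ for formal reasons. That is exactly why the cyclic-trace route needs no analysis of $K$.

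\textbf{A caveat on the input.} This concerns Proposition~\ref{prop: atiyah class}, which both arguments take as given, not your deduction.
\begin{itemize}
\item Compute on the affine cone, where all those expressions live, and contract with the Euler vector field $E$. One gets $\iota_E\bigl((\pi\otimes\id)(\Theta-\Xi)_{ij,tu}\bigr)=2\,\pi\,\Lambda_{\mathbf d}\bigl(\tfrac{\partial_iAB}{Q_i}-\tfrac{\partial_jAB}{Q_j}\bigr)$.
\item This does not vanish in general once $\Lambda_{\mathbf d}$ is not scalar, for example for the Koszul factorization of $a_1b_1+a_2b_2$ with $\deg a_i\neq\deg b_i$. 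The same contraction of $(\pi\otimes\id)(\Theta+\Xi)_{ij,tu}$ is zero.
\item A genuine $\End(\F)\otimes\Omega^1_X$-valued cochain must be killed by $\iota_E$. So the relative sign between the $\Lambda_{\mathbf d}$-terms and the $dA$-term there appears to be flipped.
\item The likely source is the direction of $\psi_t$ in Example~\ref{ex: Atiyah of direct sum}. As written it produces $\nabla^{\E_0}_t=d+\Lambda_{\mathbf d}\,dx_t/x_t$ rather than the horizontal $d-\Lambda_{\mathbf d}\,dx_t/x_t$.
\item Relatedly, contrary to your first sentence, neither $\Theta$, $\Xi$, nor their sum is literally $\End(\E_0)\otimes\Omega^1_X$-valued; only their composites with $\pi$ can be.
\end{itemize}
Your trace-transfer argument is insensitive to this sign and goes through verbatim for the corrected lift.
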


\begin{proof}
We compute using the Atiyah class in Proposition~\ref{prop: atiyah class}. Consider the open set 
$$
U_{(i_0,t_0), \dots, (i_k, t_k)} := \bigcap_{j=0}^{k} U_{i_j, t_j}.
$$
Now,
 \begin{align*}
 \op{ch}_k(\F)_{U_{(i_0,t_0), \dots, (i_k, t_k)}} &= \frac{1}{k!} \op{tr}\left[(\pi\otimes \id)\left(
      \Lambda_{\mathbf{d}} \frac{dx_{t_0} }{x_{t_0}}  s_{i_0} - \Lambda_{\mathbf{d}}\frac{d x_{t_1}}{x_{t_1}} s_{i_1}  -    dA\frac{\partial_{i_0}B}{Q_{i_0}} s_{i_1}  \right) \cdots \right.\\
      &\qquad \qquad \left.(\pi\otimes \id)\left(
      \Lambda_{\mathbf{d}} \frac{dx_{t_{k-1}} }{x_{t_{k-1}}}  s_{i_{k-1}} - \Lambda_{\mathbf{d}}\frac{d x_{t_k}}{x_{t_k}} s_{i_k}  -    dA\frac{\partial_{i_{k-1}}B}{Q_{i_{k-1}}} s_{i_k}  \right)\right] \\
 &= \frac{1}{k!} \op{tr}\left[\left(
      \Lambda_{\mathbf{d}} \frac{dx_{t_0} }{x_{t_0}}  s_{i_0} - \Lambda_{\mathbf{d}}\frac{d x_{t_1}}{x_{t_1}} s_{i_1}  -    dA\frac{\partial_{i_0}B}{Q_{i_0}} s_{i_1}  \right) \cdots \right.\\
      &\qquad \qquad \left.(\pi\otimes \id)\left(
      \Lambda_{\mathbf{d}} \frac{dx_{t_{k-1}} }{x_{t_{k-1}}}  s_{i_{k-1}} - \Lambda_{\mathbf{d}}\frac{d x_{t_k}}{x_{t_k}} s_{i_k}  -    dA\frac{\partial_{i_{k-1}}B}{Q_{i_{k-1}}} s_{i_k}  \right)\pi\right] \\
    &= \frac{1}{k!} \op{tr}\left[\left(
      \Lambda_{\mathbf{d}} \frac{dx_{t_0} }{x_{t_0}} \frac{\partial_{i_0} AB}{Q_{i_0}} - \Lambda_{\mathbf{d}}\frac{d x_{t_1}}{x_{t_1}} \frac{\partial_{i_1} AB}{Q_{i_1}}  -    dA\frac{\partial_{i_0}B\partial_{i_1} AB}{Q_{i_0}Q_{i_1}}  \right) \cdots \right.\\
   &\qquad \qquad \left.\left(
      \Lambda_{\mathbf{d}} \frac{dx_{t_{k-1}} }{x_{t_{k-1}}}  \frac{\partial_{i_{k-1}} AB}{Q_{i_{k-1}}} - \Lambda_{\mathbf{d}}\frac{d x_{t_k}}{x_{t_k}} \frac{\partial_{i_k} AB}{Q_{i_k}}  -    dA\frac{\partial_{i_{k-1}}B\partial_{i_k} AB}{Q_{i_{k-1}} Q_{i_k}}  \right)\right] \\
       &= \frac{1}{k!} \op{tr}((\Theta - \Xi)^k).
 \end{align*}
 The first line uses the definition of the Alexander-\v{C}ech-Whitney product (see, e.g., \cite[\textsection 4.2.1]{CKK}) to expand the expression for the exponential. The second equality uses cyclic invariance of the trace.  The third equation is \Cref{lem: splitting}.  The fourth equality uses the definitions of $\Theta$ and $\Xi$ and recompresses them using the Alexander-\v{C}ech-Whitney product.
\end{proof}

\section{The primitive component of the Chern character}

\subsection{Primitive cohomology of projective hypersurfaces}
\label{sec: jacobian formula}

  \Cref{thm: chern character} gives an explicit formula for the Chern character of an ACM sheaf in terms of the 2-periodic resolution.  In this section we setup the machinery to go from a \v{C}ech cocycle to an element of the Jacobian ring.  This will lead us to a formula in the Jacobian ring for the primitive part of the Chern character map (see \Cref{thm: KL}).

To begin, recall the following celebrated theorem of Griffiths \cite{Gri} which interprets the cohomology of a hypersurface in terms of the Jacobian ring:

\begin{theorem}[Griffiths Residue Theorem]\label{Griffiths Residue Theorem}
    There is a (vector space) isomorphism between the $(n-k,k)$-bigraded primitive cohomology of $X$ and $((k+1)\deg f - (n+2))$-graded pieces of the Jacobian ring. That is, 
    \begin{align*}
   \left(\mathbb{C}[x_0, \dots, x_{n+1}]/J(Q)\right)_{(k+1)\deg Q -(n+2)} &\stackrel{\sim}{\longrightarrow} H^{n-k,k}_{\op{prim}}(X); \\
   M &\longmapsto \op{res} \Omega^M,
    \end{align*}
    where $$\Omega^M = \frac{M\Omega}{Q^{k+1}}, \qquad \Omega =  \sum_{j=0}^{n+1} (-1)^j x_j dx_0\wedge \dots \wedge \widehat{dx_j} \wedge \cdots \wedge dx_{n+1}.$$
\end{theorem}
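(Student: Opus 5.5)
Since this is Griffiths' classical theorem \cite{Gri}, I would follow his route through the complement $U := \mathbb{P}^{n+1}\setminus X$ rather than work on $X$ directly. The plan has three stages. First, identify $H^n_{\op{prim}}(X)$ with $H^{n+1}(U)$ via the residue. Second, describe $H^{n+1}(U)$ by rational forms with poles along $X$. Third, match the pole-order filtration with the Hodge filtration, which produces the graded pieces of the Jacobian ring.

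\emph{Step 1 (residue).} I would use the Gysin long exact sequence
\[
H^{n+2}_c \leftarrow \cdots \to H^{n-1}(X) \to H^{n+1}(\mathbb{P}^{n+1}) \to H^{n+1}(U) \xrightarrow{\op{res}} H^n(X) \to H^{n+2}(\mathbb{P}^{n+1}) \to \cdots.
\]
Here $H^{n+1}(\mathbb{P}^{n+1})$ vanishes or is killed by the Lefschetz hyperplane theorem. The map $H^n(X)\to H^{n+2}(\mathbb{P}^{n+1})$ is the Gysin map, whose kernel is the primitive part. So $\op{res}\colon H^{n+1}(U)\xrightarrow{\sim} H^n_{\op{prim}}(X)$. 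The residue lowers Hodge type by $(1,1)$, so it is a morphism of mixed Hodge structures of weight shifted appropriately.

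\emph{Step 2 (forms with poles).} Since $U$ is affine, algebraic de Rham cohomology computes $H^{n+1}(U)$, and every class is represented by a global $(n+1)$-form $\frac{A\,\Omega}{Q^{q}}$ with $\deg A = qm-(n+2)$. I would then prove the reduction-of-pole-order identity. If $A=\sum_j B_j Q_j$, then
\[
\frac{A\,\Omega}{Q^{q+1}} \equiv \frac{1}{q}\,\frac{\big(\sum_j \partial_j B_j\big)\Omega}{Q^{q}} \pmod{\text{exact forms}}.
\]
This follows by computing $d$ of the $n$-form obtained by contracting $\Omega$-type expressions with $\sum_j B_j\,\partial/\partial x_j$ divided by $Q^q$, using Euler's relation. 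Set $P^{q} := \{\text{classes with pole order} \le q\}$. The identity gives a well-defined surjection
\[
S_{qm-(n+2)} \to P^{q}/P^{q-1},
\]
and it shows that $J(Q)_{qm-(n+2)}$ lies in the kernel.

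\emph{Step 3 (Hodge filtration and injectivity).} I would show $P^{k+1}$ maps onto $F^{n-k}H^n_{\op{prim}}$. The tool is the logarithmic de Rham complex $\Omega^\bullet_{\mathbb{P}}(*X)$, filtered by pole order, together with Bott vanishing $H^i(\mathbb{P}^{n+1},\Omega^j(\ell))=0$ for $i>0$, $\ell>0$. This vanishing lets the Hodge-to-de Rham spectral sequence for the pole-order filtered complex be computed by global sections. The main obstacle is the injectivity of $(S/J)_{(k+1)m-(n+2)} \to \op{Gr}_F$: one must show that if $\frac{A\Omega}{Q^{k+1}}$ is exact modulo lower pole order, then $A\in J$.

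My first attempt at injectivity is as follows. Write the exactness relation as $\frac{A\Omega}{Q^{k+1}} = d\!\left(\frac{\phi}{Q^{k}}\right) + \frac{(\ldots)}{Q^{k}}$ with $\phi$ a contraction of $\Omega$-type forms. Comparing numerators then shows that $A$ is a combination of the $Q_j$ modulo $Q$. Since $Q\in J$ by Euler's relation, this gives $A\in J$. For the general exactness relation, I would use that $Q_0,\dots,Q_{n+1}$ form a regular sequence (smoothness of $X$), so the Koszul complex on the $Q_j$ is exact. That exactness is what lets the numerator comparison be solved degree by degree.

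Finally, to conclude that the maps are isomorphisms and not just injections, I would do a dimension check. Compare $\dim(S/J)_{(k+1)m-(n+2)}$, computed from the Koszul Hilbert series $\left(\frac{1-t^{m-1}}{1-t}\right)^{n+2}$, against the primitive Hodge numbers from Hirzebruch's formula.
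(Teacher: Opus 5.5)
Your outline is correct and is essentially Griffiths' own argument, which is all the paper relies on here: it quotes the theorem from \cite{Gri} and gives no proof of its own. The one load-bearing input you should name explicitly is Deligne's filtered quasi-isomorphism $(\Omega^\bullet_{\mathbb{P}^{n+1}}(\log X),F)\simeq(\Omega^\bullet_{\mathbb{P}^{n+1}}(*X),P)$ together with $E_1$-degeneration (note that $\Omega^\bullet_{\mathbb{P}^{n+1}}(*X)$ is the meromorphic complex, not the logarithmic one); combined with Bott vanishing, this shows that a form of pole order $\le k+1$ which is exact on $U$ is $d$ of a form of pole order $\le k$, so your first injectivity attempt is already complete and the Hirzebruch dimension count serves only as a consistency check.
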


In this section, we explain how to invert this isomorphism at the level of \v{C}ech cohomology: start with a \v{C}ech cocycle in $H^n(X, \Omega^n_X)$ and give its corresponding element in the Jacobian ring. This is an inversion of a result of Carlson and Griffiths who give the \v{C}ech cocycle for corresponding to any element of the Jacobian ring \cite[Page 7]{CG}. 
We begin by reviewing their result.

Take $dV = dx_0\wedge \dots \wedge dx_{n+1}$ and $E$ the Euler vector field. Note that $$\Omega = EdV.$$
Given $J=\{j_0, \dots, j_k\} \subseteq\{0, \dots, n+1\}$, write $\Omega_J := K_J \Omega $, where $K_J = K_{j_0} \cdots K_{j_k}$ and $K_{j}$ is the contraction by the vector field $\tfrac{\partial}{\partial x_j}$.
Consider the open cover $\widetilde{\mathcal U}$ of $X$ given in \eqref{eq: U tilde cover}.

\begin{proposition}[Carlson, Griffiths]\label{carlsongriffiths cech}
    Suppose $M \in \left(\mathbb{C}[x_0, \dots, x_{n+1}]/J(f)\right)_{(k+1)\deg f -(n+2)}$, then one can write $\op{res} \Omega^M$ as the \v{C}ech cocycle
    $$
    \op{res}\Omega^M = c_k \left(\frac{M\Omega_J}{Q_{j_0} \cdots Q_{j_k}}  \right)_{J=\{j_0, \dots, j_k\}}  \in H^k(X, \Omega_X^{n-k}),
    $$
where $c_k = \frac{(-1)^{n+k(k+1)/2}}{k}$.
\end{proposition}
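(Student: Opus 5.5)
To prove \Cref{carlsongriffiths cech}, I will use the standard Carlson--Griffiths inductive comparison. The idea is to follow the residue of $\Omega^M$ through the spectral sequence of the pole-order filtration and match it with a \v{C}ech representative on the cover $\widetilde{\mathcal U}$.

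\textbf{Step 1: the \v{C}ech--de Rham setup.} Write $\Omega^{\bullet}_{\mathbb P^{n+1}}(*X)$ for meromorphic forms with poles along $X$. The residue map sends $H^{n+1}(\mathbb P^{n+1}\setminus X)$ to $H^n_{\op{prim}}(X)$, and $H^{n+1}(\mathbb P^{n+1}\setminus X)$ is computed by the \v{C}ech--de Rham double complex on the cover $\widetilde{\mathcal U}$ restricted to the complement. The form $\Omega^M = M\Omega/Q^{k+1}$ is a global $(n+1)$-form, so it is a degree $(0,n+1)$ cocycle. I will then run the usual zig-zag: at each stage a form of pole order $p$ on $D(Q_{j_0}\cdots Q_{j_\ell})$ is written as $d$ of a form of pole order $p-1$, plus a \v{C}ech coboundary. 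This is done $k$ times, until the pole order is $1$ on $(k)$-fold intersections. At that point the residue lands in $\check H^k(\widetilde{\mathcal U},\Omega^{n-k}_X)$, which is exactly the Hodge component $H^{n-k,k}$.

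\textbf{Step 2: the key primitive identity.} On $D(Q_j)$ I need an explicit primitive. Because the contraction $K_j$ by $\partial/\partial x_j$ anticommutes appropriately with $dQ\wedge$, and $dQ\wedge\Omega_J$ can be expanded via Euler's identity $\sum x_iQ_i = mQ$, one gets identities of the form
\[
d\!\left(\frac{M\,\Omega_{J}}{Q_{j_0}\cdots Q_{j_\ell}\,Q^{p}}\right) = c\,\frac{(\cdots)\,\Omega_{J'}}{Q^{p+1}} + \text{(terms divisible by $Q$, hence of lower pole order)},
\]
with the \v{C}ech differential of $\bigl(M\Omega_J/(Q_{j_0}\cdots Q_{j_\ell}Q^p)\bigr)_J$ matching the previous stage. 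Concretely, the lower step
\[
\frac{M\Omega_{J}}{Q_{J}Q^{p}}\quad\text{versus}\quad \frac{M\Omega_{J\cup\{j\}}}{Q_{J}Q_jQ^{p-1}}
\]
is governed by $\frac{dQ}{Q}\wedge K_j(\cdot)+K_j\bigl(\frac{dQ}{Q}\wedge\cdot\bigr)=\frac{Q_j}{Q}(\cdot)$. I will verify this via Cartan's formula together with the homogeneity relation $L_E\Omega_J = (\text{degree})\,\Omega_J$. The correction terms (derivatives of $M$) have strictly smaller pole order. By Griffiths' pole reduction these terms contribute only to deeper filtration pieces, so they vanish in $\op{Gr}_F$ and may be discarded when computing the class in $H^k(X,\Omega^{n-k}_X)$.

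\textbf{Step 3: tracking constants.} Each zig-zag step introduces a factor $-1/p$ from $d(Q^{-p}) = -pQ^{-p-1}dQ$, together with a sign from reordering the contractions in $K_J$ and from the \v{C}ech sign conventions. The last step takes the residue, $\op{res}(\eta\wedge dQ/Q)=\eta|_X$. Accumulating these signs and factors should give $c_k=(-1)^{n+k(k+1)/2}/k$. I expect the main obstacle to be this constant bookkeeping, specifically reconciling the factorials against the single $1/k$. I would therefore first check the case $k=1$ directly, and then the induction step $k\to k+1$, fixing the normalization of $\op{res}$ used in \Cref{Griffiths Residue Theorem}.

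Alternatively, since the statement is exactly \cite[p.~7]{CG}, the argument may simply cite it after matching conventions. In that case the only work is confirming that their sign convention for $\Omega_J$ agrees with $K_{j_0}\cdots K_{j_k}\Omega$ here.
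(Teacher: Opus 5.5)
The paper gives no argument for this proposition; it simply quotes \cite[p.~7]{CG}. Your fallback of citing Carlson--Griffiths is therefore exactly what the paper does. Your zig-zag sketch is the argument behind that citation: on $D(Q_j)$ use $Q_j\Omega = dQ\wedge\Omega_j + mQ\,K_j\,dV$, lower the pole order, push lower-order terms into $F^{n-k+1}$ by Griffiths' theorem, and take the residue once the pole is simple.

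The genuine gap is in Step 3, and it cannot be closed in the direction you are aiming for. Your bookkeeping is itself correct. Lowering the pole order from $p+1$ to $p$ uses $dQ/Q^{p+1} = -\tfrac{1}{p}\,d(Q^{-p})$. On the associated graded of the pole-order filtration, $d$ acts $\mathcal O$-linearly, $\eta/Q^p\mapsto -p\,dQ\wedge\eta/Q^{p+1}$, so the discarded terms never feed back into the leading coefficient. Running $p=k,k-1,\dots,1$ therefore gives $\pm 1/k!$. No choice of conventions for $K_J$, the \v{C}ech--de Rham differential, or $\op{res}$ turns $1/k!$ into $1/k$.

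This is consistent with the constant in \cite{CG}, which is $(-1)^{n+k(k+1)/2}/k!$. The displayed $c_k$ agrees with it only for $k=1,2$, so the fourfold applications ($k=2$) are unaffected. At $k=0$ the displayed formula is undefined, whereas the identity $Q_j\Omega = dQ\wedge\Omega_j + mQ\,K_j\,dV$ shows directly that the Poincar\'e residue of $M\Omega/Q$ on $D(Q_j)$ is $\pm M\Omega_j/Q_j$, i.e.\ constant $\pm 1 = \pm 1/0!$.

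So the sentence ``accumulating these signs and factors should give $c_k=(-1)^{n+k(k+1)/2}/k$'' is the step that fails. Carried out honestly, your argument proves the proposition with $k!$ in place of $k$. You should state that the printed constant is off for $k\ge 3$ rather than try to reconcile it.

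Separately, the sign $(-1)^{n+k(k+1)/2}$ is asserted, not derived. It depends on the order in $K_J=K_{j_0}\cdots K_{j_k}$, the sign rule in the total differential, and whether $\op{res}(\tfrac{dQ}{Q}\wedge\eta)$ or $\op{res}(\eta\wedge\tfrac{dQ}{Q})$ equals $\eta|_X$. It has to be pinned down by the explicit $k=1$ computation and inductive step you propose.
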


For each $\ell$, the short exact sequence
$$
0 \longrightarrow \Omega_{X}^{n-\ell-1}(-\deg Q) \stackrel{dQ}{\longrightarrow} \Omega^{n-\ell}_{\mathbb P^{n+1}}|_X \longrightarrow \Omega_X^{n-\ell} \longrightarrow 0
$$
induces a connecting homomorphism
$$
\delta_{\ell}: H^{\ell}(X,\Omega_X^{n-\ell}) \longrightarrow H^{\ell+1}(X, \Omega_{X}^{n-\ell-1}(-\deg Q)).
$$
Fix $M \in \left(\mathbb{C}[x_0, \dots, x_{n+1}]/J(f)\right)_{(k+1)\deg f -(n+2)}$. Consider the cohomology class
$$
\Omega^M_{\ell} := c_k \left(\frac{M\Omega_J}{\partial_{j_0} Q \cdots \partial_{j_\ell}Q}  \right)_{J=\{j_0, \dots, j_\ell\}}  \in H^\ell(X, \Omega_X^{n-\ell}(-(\ell-k)\deg Q)).
$$

\begin{lemma}\label{lemma: connecting hom}
The connecting homomorphism applied to the \v{C}ech residue is precisely 
\[
\delta_{\ell} (\Omega^M_{\ell}) = \Omega^M_{\ell+1}.
\]
\end{lemma}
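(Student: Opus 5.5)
We compute $\delta_\ell$ directly with the standard \v{C}ech recipe for a connecting homomorphism, on the cover $\widetilde{\mathcal U}$. The recipe has three steps. First, lift each component of the cocycle $\Omega^M_\ell$ to a section of $\Omega^{n-\ell}_{\mathbb P^{n+1}}|_X$ (twisted appropriately). Second, apply the \v{C}ech differential. Third, observe that the result lies in the image of $dQ\wedge(-)$, and divide by $dQ$ to get a class in $H^{\ell+1}(X,\Omega_X^{n-\ell-1}(-\deg Q))$.

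For the lift, the natural choice is the same expression. On $D(Q_{j_0}\cdots Q_{j_\ell})$ the form $\frac{M\Omega_J}{Q_{j_0}\cdots Q_{j_\ell}}$ makes sense as a homogeneous ambient form. It descends to projective space because contracting with the Euler field $E$ kills it: $\Omega_J = K_J E\, dV$, and $K_E K_J K_E=0$ up to sign. It also has the correct total degree to be a section of the twisted sheaf. So we take the lift to be literally this formula and need not correct it.

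Next we apply $d_{\check C}$ over an index set $J'=\{j_0,\dots,j_{\ell+1}\}$, obtaining
\[
\sum_{s=0}^{\ell+1}(-1)^s \frac{M\,\Omega_{J'\setminus j_s}}{\prod_{r\neq s}Q_{j_r}}
=\frac{M}{Q_{j_0}\cdots Q_{j_{\ell+1}}}\sum_{s}(-1)^s Q_{j_s}\,K_{J'\setminus j_s}\Omega .
\]
The key identity is the Leibniz rule for contractions. Since $K_{J'}$ is an iterated contraction, for any $1$-form $\alpha$ we have $\iota\big(\tfrac{\partial}{\partial x_{j}}\big)$-derivations giving
\[
K_{J'}(dQ\wedge\Omega)=\sum_s (-1)^s Q_{j_s}K_{J'\setminus j_s}\Omega \pm dQ\wedge K_{J'}\Omega .
\]
Moreover, $dQ\wedge\Omega$ is an $(n+2)$-form: it equals $dQ\wedge K_E dV = \pm K_E(dQ)\,dV = \pm mQ\,dV$ by Euler's formula. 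Therefore the alternating sum equals $\pm dQ\wedge K_{J'}\Omega$ plus a term divisible by $Q$, and the latter vanishes on restriction to $X$.

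Modulo $Q$, the \v{C}ech coboundary is therefore $dQ\wedge \frac{M\Omega_{J'}}{Q_{j_0}\cdots Q_{j_{\ell+1}}}$. Dividing by $dQ$ gives the component $\frac{M\Omega_{J'}}{Q_{j_0}\cdots Q_{j_{\ell+1}}}$ of $\Omega^M_{\ell+1}$, and the constant $c_k$ simply carries through. The twist drops by $\deg Q$, which matches the extra factor $Q_{j_{\ell+1}}$ in the denominator.

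The main obstacle is the sign bookkeeping. We must check that the sign in the graded Leibniz rule for $K_{J'}$, combined with the \v{C}ech sign convention, gives exactly $+\,dQ\wedge K_{J'}\Omega$ rather than its negative. We also have to check that this agrees with the orientation conventions implicit in Proposition~\ref{carlsongriffiths cech}, since the statement asserts equality on the nose and not up to sign. To handle this, I would first verify the case $\ell=0$ explicitly, where only a single contraction appears, and then induct using $K_{J'}=K_{J}K_{j_{\ell+1}}$ together with the anticommutation $K_aK_b=-K_bK_a$.
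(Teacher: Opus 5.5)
Your proposal is correct and is essentially the paper's argument: you take the same lift, apply the same \v{C}ech differential, and use Euler's relation to discard a multiple of $Q$. The only difference is that your graded Leibniz rule for the contractions $K_j$ packages the explicit add-and-subtract coordinate computation the paper carries out. The sign you flag also comes out directly, with no induction needed: for $K_{J'}=K_{j_0}\cdots K_{j_{\ell+1}}$, the derivation rule together with $dQ\wedge\Omega = mQ\,dV$ gives exactly $\sum_s(-1)^sQ_{j_s}K_{J'\setminus\{j_s\}}\Omega = dQ\wedge K_{J'}\Omega + (-1)^{\ell+1}mQ\,K_{J'}dV$, so on $X$ the coboundary is $+\,dQ\wedge\frac{M\Omega_{J'}}{Q_{j_0}\cdots Q_{j_{\ell+1}}}$.
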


\begin{proof}
We first view $\Omega_{\ell}^M$ as an element in $H^{\ell}(X, \Omega_{\mathbb P^{n+1}}^{n-\ell}|_X(-(\ell-k)\deg Q))$. By the snake lemma, we know that 
$$
d_{\check{C}} (\Omega_{\ell}^M) = dQ \wedge \omega
$$
for some $\omega \in  H^{\ell+1}(X, \Omega_X^{n-\ell-1}(-(\ell+1-k)\deg Q))$ and this determines $\delta_\ell(\Omega_\ell^M)=\omega$. The claim then reduces to showing that $\omega = \Omega^M_{\ell+1}$. To condense notation, we write $dx_S:= dx_{s_0} \wedge \cdots \wedge dx_{s_k}$ if $S = \{s_0, \dots, s_k\}$ and $s_{i} < s_{i+1}$.  We compute
\begin{align*}
   (d_{\check{C}} (\Omega_{\ell}^M))_{S = \{s_0, \dots, s_k\}} &= \sum_{i=0}^k (-1)^i \frac{Q_{s_i} M K_{S \setminus\{s_i\}} E dV}{Q_{s_0} \cdots Q_{s_k}} \\
   &= \sum_{i=0} (-1)^{i+k} \frac{Q_{s_i} ME  K_{S \setminus\{s_i\}} dV}{Q_{s_0} \cdots Q_{s_k}} \\
   &= \sum_{i=0}^k (-1)^{i+ k- s_i + \sum_{j=0}^k s_j } \frac{Q_{s_i} ME  dx_{(S \setminus\{s_i\})^{c}}}{Q_{s_0} \cdots Q_{s_k}}\\
    &= \sum_{i=0}^k (-1)^{k + \sum_{j=0}^k s_j } \frac{Q_{s_i} ME dx_{s_i}\wedge dx_{S^{c}}}{Q_{s_0} \cdots Q_{s_k}}\\
\end{align*}
Apply the Euler vector field and then add and subtract an extra term to simplify.
\begin{equation}\begin{aligned}\label{residue dQ computation}
   (d_{\check{C}} &(\Omega_{\ell}^M))_{S = \{s_0, \dots, s_k\}} = \sum_{i=0}^k (-1)^{k+ \sum_{j=0}^k s_j} \frac{ x_{s_i} Q_{s_i} M dx_{S^c}}{Q_{s_0} \cdots Q_{s_k}} + \sum_{\ell \notin S} (-1)^{k+ \sum_{j=0}^k s_j} \frac{M Q_j dx_j \wedge Edx_{S^c}}{Q_{s_0} \cdots Q_{s_k}} \\ 
   &\quad -  \sum_{i=0}^k (-1)^{k+ \sum_{j=0}^k s_j} \frac{ MQ_{s_i} dx_{s_i} \wedge Edx_{S^c}}{Q_{s_0} \cdots Q_{s_k}} - \sum_{\ell \notin S} (-1)^{k+ \sum_{j=0}^k s_j} \frac{M Q_j dx_j \wedge Edx_{S^c}}{Q_{s_0} \cdots Q_{s_k}}.
\end{aligned}\end{equation}
Note that, for $j \notin S$, $dx_j \wedge E dx_{S^c} = x_j dx_{S^c}$. Thus we reduce ~\eqref{residue dQ computation} to
\begin{equation}\begin{aligned}
   (d_{\check{C}} (\Omega_{\ell}^M))_{S = \{s_0, \dots, s_k\}} &=  (-1)^{k+ \sum_{j=0}^k s_j} \frac{ \sum_{i=0}^nx_{i} Q_{i} M dx_{S^c}}{Q_{s_0} \cdots Q_{s_k}}   -  (-1)^{k+ \sum_{j=0}^k s_j} \frac{ \sum_{i=0}^n MQ_{i} dx_{i} \wedge Edx_{S^c}}{Q_{s_0} \cdots Q_{s_k}} \\
   &= (-1)^{k+ \sum_{j=0}^k s_j} \frac{ rQ M dx_{S^c}}{Q_{s_0} \cdots Q_{s_k}}   -  (-1)^{k+ \sum_{j=0}^k s_j} \frac{  M dQ \wedge Edx_{S^c}}{Q_{s_0} \cdots Q_{s_k}}
\end{aligned}\end{equation}
Since we are on $X$, the first summand vanishes and we then have

\begin{align*}
    (d_{\check{C}} (\Omega_{\ell}^M))_{S = \{s_0, \dots, s_k\}} &=  (-1)^{k+1+ \sum_{j=0}^k s_j} \frac{  M dQ \wedge Edx_{S^c}}{Q_{s_0} \cdots Q_{s_k}}\\
    &= (-1)^{k+1} \frac{  M dQ \wedge EK_SdV}{Q_{s_0} \cdots Q_{s_k}} \\
    &= \frac{M dQ \wedge K_S E dV}{Q_{s_0} \cdots Q_{s_k}} \\
    &= dQ \wedge \frac{M  K_S E dV}{Q_{s_0} \cdots Q_{s_k}},
\end{align*}
proving the claim.
\end{proof}

Now consider the short exact sequence
$$
0 \longrightarrow \O_{X}(-(k+1)m) \stackrel{Q}{\longrightarrow} \O_{\mathbb P^{n+1}}(-km) \longrightarrow \O_X(-km) \longrightarrow 0.
$$
This induces another connecting homomorphism
\begin{equation}\label{eq: conn hom for the projective}
\delta_{\mathbb P^{n+1}}: H^{n}(X,\O(-km)) \longrightarrow H^{n+1}(\mathbb P^{n+1}, \O_{\mathbb P^{n+1}}(-(k+1)m)).
\end{equation}

Consider the case where $X \subseteq \mathbb{P}^{2k+1}$. We denote the total composition of all of these connecting homomorphisms by
\begin{equation}\label{eq: bold delta}
\pmb{\delta}:=\delta_{\mathbb P^{2k+1}}\circ\delta_{2k-1}\circ \cdots \circ \delta_k: H^{k}(X,\Omega_X^{k}) \longrightarrow H^{2k+1}(\mathbb P^{2k+1}, \mathcal O_{X}(-(k+1)m)) 
\end{equation}

\begin{lemma}
\label{lem: total of connecting homs}
Suppose $X \subseteq \mathbb{P}^{2k+1}$. Then take $\op{res}\Omega^M \in H^k(X, \Omega_X^k)$. Then 
$$
\pmb{\delta}(\op{res}\Omega^M) =(-1)^k m c_k \left(\frac{M}{Q_0 \cdots Q_{n+1}}\right).
$$
\end{lemma}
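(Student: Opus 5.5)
We plan to apply the earlier results in sequence, with $n=2k$. By \Cref{carlsongriffiths cech}, $\op{res}\Omega^M$ is the \v{C}ech cocycle $\Omega^M_k = c_k\bigl(M\Omega_J/(Q_{j_0}\cdots Q_{j_k})\bigr)_J$ on the cover $\widetilde{\mathcal U}$. Applying \Cref{lemma: connecting hom} repeatedly, for $\ell=k,k+1,\dots,2k-1$, gives
\[
\delta_{2k-1}\circ\cdots\circ\delta_k(\op{res}\Omega^M)=\Omega^M_{2k}
= c_k\left(\frac{M\Omega_J}{Q_{j_0}\cdots Q_{j_{2k}}}\right)_{|J|=2k+1}\in H^{2k}(X,\Omega^0_X(-km)).
\]
The twist is $-(2k-k)m=-km$, which matches the source of $\delta_{\mathbb P^{2k+1}}$ in \eqref{eq: conn hom for the projective}.

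Next we identify $\Omega_J$ for $|J|=2k+1=n+1$. In that case $J=\{0,\dots,n+1\}\setminus\{p\}$ for a single index $p$. The form $\Omega_J=K_J(E\,dV)$ is a function, namely $\pm x_p$, where the sign depends on $p$ and on the contraction-order conventions. So $\Omega^M_{2k}$ has the component $\pm c_k M x_p/\prod_{j\neq p}Q_j$ on the $(2k+1)$-fold intersection missing $D(Q_p)$.

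Now compute $\delta_{\mathbb P^{2k+1}}$. Lift each component to the same rational expression on the corresponding open set of $\mathbb P^{2k+1}$; this is legitimate because $\widetilde{\mathcal U}$ also covers $\mathbb P^{n+1}$, since $X$ is smooth. Then take the \v{C}ech differential on the unique $(n+2)$-fold intersection $D(Q_0\cdots Q_{n+1})$ and divide by $Q$. Bringing everything to the common denominator $Q_0\cdots Q_{n+1}$, the alternating sum becomes $\sum_p (\pm)(-1)^p x_pQ_p M/(Q_0\cdots Q_{n+1})$. If the signs combine so that all terms carry the same sign, Euler's formula $\sum_p x_pQ_p = mQ$ turns this into $\pm mQ\,M/(Q_0\cdots Q_{n+1})$. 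Dividing by $Q$ then yields $\pm m c_k\, M/(Q_0\cdots Q_{n+1})$, which is the claimed form. The same Euler-identity mechanism was already used inside the proof of \Cref{lemma: connecting hom}.

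The main obstacle is the sign bookkeeping. We must verify that the contraction sign of $K_J E\,dV$ exactly cancels the \v{C}ech alternating sign $(-1)^p$, so that the Euler sum really appears. We must also check that the overall sign comes out as $(-1)^k$. I would try to get this from $K_J E = (-1)^{|J|}E K_J$-type commutations, with $|J|=2k+1$, together with the ordering of $K_{j_0}\cdots K_{j_{2k}}$. Concretely, I would fix $dV$, compute $K_{\{0..n+1\}\setminus p}\,dV=\pm dx_p$, and then contract with $E$ to get $x_p$. The remaining work is only to track the parity and check that it combines to $(-1)^k$ independently of $p$.
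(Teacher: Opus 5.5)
Your plan matches the paper's proof step for step: iterate \Cref{lemma: connecting hom} to reach $\Omega^M_{2k}$, lift to $\mathbb P^{2k+1}$ on the cover $\widetilde{\mathcal U}$, take the \v{C}ech differential over the common denominator $Q_0\cdots Q_{2k+1}$, apply Euler's identity $\sum_p x_pQ_p = mQ$, and divide by $Q$. The sign check you defer is exactly the content of the paper's displayed computation, and it goes through with the paper's contraction-order convention: moving $E$ past the $2k+1$ contractions gives $(-1)^{2k+1}$; $K_{\{0,\dots,\widehat p,\dots,2k+1\}}dV = (-1)^{\sum_j j - p}\,dx_p$ cancels the \v{C}ech sign $(-1)^p$; and $1+\sum_{j=0}^{2k+1} j = 1+(2k+1)(k+1) \equiv k \pmod 2$ gives the overall $(-1)^k$.
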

\begin{proof}
Iterating Lemma~\ref{lemma: connecting hom}, we obtain that
$$
\delta_{2k-1} \circ\cdots \circ\delta_{k} (\op{res}\Omega^M) = c_k \left(\frac{MK_J\Omega}{Q_0 \cdots \widehat Q_i \cdots Q_{n+1}}\right)_{J = \{0, \dots, \widehat{i}, \dots, 2k+1\}}.
$$
To compute the connecting homomorphism $\delta_{\mathbb P^{n+1}}$, we lift this element to $\O_{\mathbb{P}^{2k+1}}(-km)$, apply the \v{C}ech differential, and factor out $Q$.  We have
\begin{align*}
d_{\check{C}} \left(\delta_{2k-1} \circ\cdots \circ\delta_{k} (\op{res}\Omega^M)\right) &= c_k\sum_{i=0}^{2k+1}(-1)^i \frac{M K_{\{0, \dots, \widehat{i}, \dots, 2k+1\}} EdV}{Q_0 \cdots \widehat Q_i \cdots Q_{n+1} } \\ 
    &= c_k\sum_{i=0}^{2k+1}(-1)^{i+2k+1} \frac{Q_iM E K_{\{0, \dots, \widehat{i}, \dots, 2k+1\}} dV}{Q_0 \cdots Q_{n+1} } \\ 
    &= c_k\sum_{i=0}^{2k+1}(-1)^{i+2k+1 + \left(-i + \sum_{j=0}^{2k+1} j\right)} \frac{Q_iM E dx_i }{Q_0 \cdots Q_{n+1} }\\ 
    &= c_k\sum_{i=0}^{2k+1}(-1)^{i+2k+1 + \left(-i + \sum_{j=0}^{2k+1} j\right)} \frac{M x_iQ_i }{Q_0 \cdots Q_{n+1} }\\
    &= c_k\sum_{i=0}^{2k+1}(-1)^{ 1 + \left( \sum_{j=0}^{2k+1} j\right)} \frac{M x_iQ_i }{Q_0 \cdots Q_{n+1} }\\
    &= (-1)^{k} mc_k \frac{MQ}{Q_0 \cdots Q_{n+1}}
\end{align*}
as desired.
\end{proof}

\begin{corollary}
\label{cor: connecting composition}
The map $\pmb{\delta}$ in ~\eqref{eq: bold delta}
has $1$-dimensional kernel generated by the hyperplane class $H^k$ and image $(S/J(Q))_{(k+1)m-n-2}^*$ under Serre duality $H^{n+1}(\mathbb P^{n+1}, \mathcal O_{X}(-(k+1)m))  \cong H^0(\mathbb P^{n+1}, \mathcal O_{X}((k+1)m-n-2))^*$.
\end{corollary}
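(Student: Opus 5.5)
We prove the corollary by splitting $H^k(X,\Omega^k_X)$ into its primitive part and the span of the hyperplane power, and then reading off $\pmb{\delta}$ on each piece from results already established. Since $n=2k$, the Lefschetz decomposition gives
\[
H^k(X,\Omega_X^k)=H^{k,k}_{\op{prim}}(X)\oplus \mathbb C\cdot H^k .
\]
By the Griffiths Residue Theorem (\Cref{Griffiths Residue Theorem}), every primitive class has the form $\op{res}\Omega^M$ with $M\in (S/J(Q))_{(k+1)m-n-2}$, and this correspondence is an isomorphism.

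First, on the primitive summand, \Cref{lem: total of connecting homs} gives
\[
\pmb{\delta}(\op{res}\Omega^M)=(-1)^k m c_k\left(\tfrac{M}{Q_0\cdots Q_{n+1}}\right),
\]
a top \v{C}ech class for the cover $\widetilde{\mathcal U}$ of $\mathbb P^{n+1}$. Under Serre duality on $\mathbb P^{n+1}$, a class $\bigl(\tfrac{M}{Q_0\cdots Q_{n+1}}\bigr)$ pairs with $g\in S_{(k+1)m-n-2}$ via the Grothendieck residue
\[
g\mapsto \op{Res}\left[\frac{gM\,dV}{Q_0,\dots,Q_{n+1}}\right].
\]
Because $Q_0,\dots,Q_{n+1}$ is a regular sequence, local duality makes this residue pairing perfect on $S/J(Q)$, with socle in degree $(n+2)(m-2)$. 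The degrees are consistent: $\deg M+\deg g=2((k+1)m-n-2)=(n+2)(m-2)$ when $n=2k$. The pairing kills $J(Q)$ in $g$, so the functional factors through $(S/J(Q))_{(k+1)m-n-2}$. Perfectness then shows that $M\mapsto \pmb{\delta}(\op{res}\Omega^M)$ is injective on the primitive part, with image exactly $(S/J(Q))_{(k+1)m-n-2}^*$, viewed inside $H^0(\mathcal O((k+1)m-n-2))^*=S_{(k+1)m-n-2}^*$ as the annihilator of $J(Q)_{(k+1)m-n-2}$. Here $m c_k\neq 0$.

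Second, we show $\pmb{\delta}(H^k)=0$. We would argue via the factorization through the ambient space. The class $H$ is restricted from $\mathbb P^{n+1}$, so $H^k$ lies in the image of $H^k(\mathbb P^{n+1},\Omega^k_{\mathbb P^{n+1}})\to H^k(X,\Omega^k_{\mathbb P^{n+1}}|_X)\to H^k(X,\Omega^k_X)$. Its image under $\delta_k$ therefore vanishes, since a class lifting to $\Omega^k_{\mathbb P^{n+1}}|_X$ dies under the connecting map. This is exactly exactness of the long sequence at $H^k(X,\Omega^k_X)$. Hence $\pmb{\delta}(H^k)=0$. An alternative route is a degree count: $\pmb\delta(H^k)$ must be $\mathbb C^*$-invariant and primitive-orthogonal, but the first argument is cleaner.

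Combining the two steps, $\ker\pmb{\delta}=\mathbb C H^k$, which is $1$-dimensional since $H^k\neq 0$ (its self-intersection is $m$), and the image is the stated dual. The main obstacle is the Serre duality identification, namely justifying that the \v{C}ech class $\bigl(\tfrac{M}{\prod Q_i}\bigr)$ corresponds to the Grothendieck residue functional and that this functional is perfect modulo $J(Q)$. We would cite the standard local duality (socle) theorem for the complete intersection $J(Q)$ rather than recompute it.
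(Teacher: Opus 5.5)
Your proposal is correct and follows the paper's argument. The Griffiths residue theorem gives the basis $\{H^k\}\cup\{\op{res}\Omega^M\}$. The lemma on the composite of connecting homomorphisms, together with the Grothendieck-residue description of Serre duality on the cover $\widetilde{\mathcal U}$, handles the primitive part. Finally, $\pmb{\delta}(H^k)=0$ because $H$ lifts to $\Omega^1_{\mathbb P^{n+1}}|_X$; the paper phrases this as $d_{\check{C}}(H)=0$ for the explicit cocycle, which is your exactness argument written in \v{C}ech terms.

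Your explicit appeal to the perfectness of the residue pairing on the Artinian Gorenstein ring $S/J(Q)$ fills in a step the paper leaves implicit: injectivity on the primitive part and surjectivity onto $(S/J(Q))^*_{(k+1)m-n-2}$.
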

\begin{proof}
By the Griffiths residue theorem (see \Cref{Griffiths Residue Theorem}), the source of $\pmb{\delta}$ has a basis consisting of $H^k$ and elements of the form $\op{res}\Omega^M$.
By \Cref{lem: total of connecting homs}, elements of the form $\op{res}\Omega^M$ map to the \v{C}ech cocycle $(-1)^k m c_k \left(\frac{M}{Q_0 \cdots Q_{n+1}}\right)$.
It remains to show that under  Serre duality, this corresponds to the function 
\[
(S)_{(k+1)m - n-2} \to (S/J(Q) )_{(k+1)m - n-2} \xrightarrow{\langle M, - \rangle } k
\] and that $\pmb{\delta}(H^k)=0$.

Computing Serre duality using the Grothendieck residue symbol tells us generally that if $\{D(f_1), ..., D(f_t)\}$ is a cover of $\mathbb P^{n+1}$ and we take a top \v{C}ech cohomology class $\frac{M}{f_1\cdots f_t}$ of deg $\ell$, then the Serre dual is the function
\[
(S)_{\ell-n-2} \to (S/\langle f_1, ..., f_t \rangle)_{\ell-n-2} \xrightarrow{\langle M, - \rangle } k
\]
where $\langle -, - \rangle$ is the residue pairing.   The result follows by specializing to the cover $\widetilde{\mathcal U} = \{ D(Q_0), ..., D(Q_{n+1}) \}$.

Finally, using the explicit expression for $H$ from \Cref{ex: H},  $d_{\check{C}}(H) = 0$. Using the product structure, this implies that $d_{\check{C}}(H^k) = 0$, thus $\delta_k(H^k)=0$ and hence $\pmb{\delta}(H^k)=0$.
\end{proof}

\subsection{A polynomial formula for the primitive component of the Chern character}
In this section, we prove our the following. 

\begin{theorem}\label{thm: KL} 
Let $\mathcal{G}$ be a coherent sheaf on a smooth $2k$-dimensional projective hypersurface $X$ with resolution 
$$
\cdots \xrightarrow{A} \mathcal{E}_0(-m) \xrightarrow{B} \mathcal{E}_1 \xrightarrow{A} \mathcal{E}_0 \to \mathcal{V}_t \to \dots \to \mathcal{V}_0 \to \mathcal{G} \to 0.
$$
The projection $\op{ch}_k^{\op{prim}}(\mathcal G)$ of the $k$th Chern character to the primitive cohomology of $X$  viewed as an element of the Jacobian ring is given by the formula
$$
\op{ch}_k^{\op{prim}}(\mathcal G) = \frac{(-1)^kc_k}{m}\op{tr}( \partial_{0}A\partial_1B \cdots \partial_{2k}A\partial_{2k+1} B -  \partial_{0}B\partial_1A \cdots \partial_{2k}B\partial_{2k+1} A).
$$
\end{theorem}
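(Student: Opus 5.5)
The plan is to reduce to the ACM sheaf $\mathcal F = \coker A$, push its \v{C}ech representative from \Cref{thm: chern character} through the composite connecting map $\pmb{\delta}$ of \eqref{eq: bold delta}, and compare with \Cref{lem: total of connecting homs}.

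\emph{Reduction to $\coker A$.} By \eqref{intro eq: chern of G}, $\op{ch}(\mathcal G)$ differs from $(-1)^{t+1}\op{ch}(\coker A)$ by Chern characters of sums of line bundles. These are polynomials in $H$, so their primitive projections vanish. It therefore suffices to compute $\op{ch}_k^{\op{prim}}(\coker A)$, with the sign absorbed into the convention for which of $A,B$ is called which. This convention is harmless because the final expression is antisymmetric under swapping $A$ and $B$.

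\emph{Applying $\pmb{\delta}$.} By \Cref{cor: connecting composition}, $\pmb{\delta}$ kills exactly the span of $H^k$ and is injective on primitive classes. By \Cref{lem: total of connecting homs}, the primitive class $\op{res}\Omega^M$ maps to $(-1)^k m c_k\, M/(Q_0\cdots Q_{2k+1})$. Hence it suffices to show
\[
\pmb{\delta}\Bigl(\tfrac{1}{k!}\op{tr}((\Theta-\Xi)^k)\Bigr) = \Bigl(\frac{\op{tr}(\partial_0A\partial_1B\cdots\partial_{2k+1}B - \partial_0B\partial_1A\cdots\partial_{2k+1}A)}{Q_0\cdots Q_{2k+1}}\Bigr)
\]
in $H^{2k+1}(\mathbb P^{2k+1},\O(-(k+1)m))$ on the cover $\widetilde{\mathcal U}$. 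Setting the two images equal then gives $M = \frac{(-1)^k c_k}{m}\op{tr}(\cdots)$.

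Before doing so I would dispose of $\Theta$. The terms involving $\Lambda_{\mathbf d}\,dx_t/x_t$ come from the Atiyah class of $\E_0$, that is, from the hyperplane class. I expect every product containing a $\Theta$ factor to either be \v{C}ech-exact or to land in the $H$-part killed by $\pmb{\delta}$. If that is not clean at the cocycle level, I would instead compute with $\Xi$ alone, after checking that $\op{tr}(\Xi^k)$ and $\op{tr}((\Theta-\Xi)^k)$ differ by a class whose primitive part is zero. So the core computation is $\pmb{\delta}(\op{tr}(\Xi^k))$.

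\emph{The iterative step.} Working on $\widetilde{\mathcal U}$, the $k$-cochain $\op{tr}(\Xi^k)$ has components
\[
\op{tr}\Bigl(dA\,\partial_{i_0}B\,\partial_{i_1}A\,B\,dA\,\partial_{i_1}B\,\partial_{i_2}A\,B\cdots\Bigr)\big/(Q_{i_0}Q_{i_1}^2\cdots Q_{i_k}).
\]
Each $\delta_\ell$ is computed by lifting to $\Omega_{\mathbb P}$, applying $d_{\check C}$, and dividing by $dQ$. The identities $B\partial_iA + \partial_iB\,A = Q_i$ and $d_{\check C}\Xi$ from \eqref{d cech T2}, which produces a factor $dQ$, show that each application of $\delta$ trades one $dA$ or $dB$ for a new index. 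Inductively I would aim for a closed form at stage $\ell$ that looks like an alternating product $\partial_{i_0}A\,\partial_{i_1}B\cdots$ contracted against $\Omega_J$-type forms. At the last stage, $\delta_{\mathbb P}$ uses the Euler relation $\sum x_iQ_i = mQ$ as in \Cref{lem: total of connecting homs}. Antisymmetrization over the index set then yields the trace of the alternating products over all $2k+2$ indices, with the $B\!-\!A$ ordered term appearing with opposite sign.

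\emph{Main obstacle.} The hardest part is keeping the combinatorics and signs straight through the $k+1$ connecting homomorphisms. This includes showing that terms not of the alternating form cancel, modulo $J(Q)$ and cyclicity of the trace, and that the factorial $1/k!$ cancels against the $k!$ orderings. I would first verify the whole computation for $k=1$, where only two steps occur. I would then organise the general case by writing $\Xi = dA\cdot\beta$ with $\beta_{ij} = \partial_iB\,\partial_jA\,B/(Q_iQ_j)$ and tracking differential forms via contractions $K_J$, as in the proof of \Cref{lemma: connecting hom}.
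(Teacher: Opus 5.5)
Your outer framework is the paper's: reduce to $\F=\coker A$, push the cocycle of \Cref{thm: chern character} through $\pmb{\delta}$, and compare with \Cref{lem: total of connecting homs} via \Cref{cor: connecting composition}. The gap is that the middle of the argument, which is the real content of the proof, is only anticipated, and the mechanisms you anticipate are not the ones that work.

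\emph{Disposing of $\Theta$.} The missing idea is \eqref{d cech T1}. $\Theta$ is already \v{C}ech-closed as a cochain with values in $\Omega^1_{\mathbb P^{2k+1}}|_X$, so $\tilde\delta(\Theta)=0$ for $\tilde\delta=\id\otimes\delta$. By contrast, $\tilde\delta(\Xi)=\xi$ is a $2$-cochain with no form part. So when you lift $\Psi^k=(\Theta-\Xi)^k$ and apply $d_{\check C}$, the Leibniz rule only hits the $\Xi$-parts of the remaining $\Psi$-factors. Each $\delta_\ell$ turns one $\Psi$ into $\xi$, summed over positions, and after $k$ steps one gets exactly $k!\,\xi^k$ (\Cref{lem: chern to s to the k}). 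This is what removes both $\Theta$ and the $1/k!$. You never need to show that words containing $\Theta$ are exact. Nor do you need intermediate stages to resemble Carlson--Griffiths $\Omega_J$-cocycles, since the two sides are compared only in $H^{2k+1}(\mathbb P^{2k+1},\O(-(k+1)m))$. Your fallback is also off by a sign. Because $\tilde\delta(\Psi)=\tilde\delta(-\Xi)$, it is $(-1)^k\op{tr}(\Xi^k)$, not $\op{tr}(\Xi^k)$, whose primitive part agrees with that of $\op{tr}(\Psi^k)$.

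\emph{The last step.} The Euler relation $\sum x_iQ_i=mQ$ enters only on the Carlson--Griffiths side, which is where $m$ comes from. On the Chern side, $\delta_{\mathbb P^{2k+1}}(\op{tr}\,\xi^k)$ is computed as follows (\Cref{d cech s hat}). You lift $\xi$ and use \eqref{s tilde dcech}. You then telescope with the identity \eqref{KL identity 1}, which follows from $Q_i=\partial_iA\,B+A\,\partial_iB$. At each step you discard the terms whose denominator misses some $Q_q$, since these are coboundaries in top degree on $\widetilde{\mathcal U}$. This produces
\[
\bigl(B\partial^A_{0,\dots,2k+1}A-Q\,\partial^B_{0,\dots,2k+1}\bigr)/(Q_0\cdots Q_{2k+1})
\]
with the single ordering $0,1,\dots,2k+1$, so no antisymmetrization is involved. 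Dividing by $Q$ and using cyclicity of the trace with $AB=Q$ gives $\op{tr}(\partial^A_{0,\dots,2k+1}-\partial^B_{0,\dots,2k+1})$. The $\partial^B$-term with the opposite sign comes from here.

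\emph{The constant.} Equating $(-1)^k m c_k M$ with this trace gives $M=(-1)^k\op{tr}(\cdots)/(m c_k)$. This equals the stated $(-1)^k c_k\op{tr}(\cdots)/m$ only when $c_k^2=1$. The paper's final line contains the same slip. It is harmless for the Hodge-theoretic applications, which only use nonvanishing, but the displayed constant does not follow as you wrote it.
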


Let $\F := \coker A$.  By \eqref{intro eq: chern of G}, $\op{ch}_k^{\op{prim}}(\mathcal G) = \op{ch}_k^{\op{prim}}(\mathcal F)$.  Therefore to prove this theorem, we can compute $\pmb{\delta}(\op{ch}_k(\mathcal{F}))$ and apply ~\Cref{cor: connecting composition}. First, note that the trace map $\op{tr}: \mathcal{E}_0 \to \O_X$ induces a commutative diagram on cohomology
\begin{equation}\label{comm diagram d cech}
\begin{tikzcd}
H^{\ell}(X, \End(\mathcal{E}_0) \otimes \Omega_X^{n-\ell}) \ar[r, "\id \otimes \delta_\ell"]\ar[d, "\op{tr}"] & H^{\ell+1}(X, \End(\mathcal{E}_0) \otimes \Omega_X^{n-\ell-1}(-m)) \ar[d, "\op{tr}"] \\
H^{\ell}(X,  \Omega_X^{n-\ell})  \ar[r, "\delta_\ell"]  & H^{\ell+1}(X, \Omega_X^{n-\ell-1}(-m)).
\end{tikzcd}
\end{equation}

Consider the map
$$
\id \otimes \delta: H^1(X, \End(\mathcal{E}_0) \otimes \Omega^1) \to H^2(X, \End({\mathcal{E}_0})(-m))
$$
By~\eqref{d cech T1} and~\eqref{d cech T2}, 
\begin{equation}
    \id \otimes \delta(\Theta) = 0 \text{ and }\id \otimes \delta(\Xi) = \xi := \left( \frac{B\partial_{j_0}A\partial_{j_1}B \partial_{j_2}A}{Q_{j_0}Q_{j_1}Q_{j_2}} \right)_{\{j_0, j_1, j_2\}}.
\end{equation}

\begin{lemma}\label{lem: chern to s to the k}
        We have that $$
 \delta_{2k-1} \circ \cdots \circ \delta_k(\op{ch}_k(\F)) =  \op{tr}(\xi^k) \in H^{2k}(X, \O(-km)).$$
\end{lemma}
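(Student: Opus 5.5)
We will prove the identity by applying the connecting homomorphisms one at a time to the cocycle of \Cref{thm: chern character}, at the level of $\End(\mathcal E_0)$-valued \v{C}ech cochains, and only take the trace at the end. The diagram \eqref{comm diagram d cech} lets us do this. By \Cref{thm: chern character}, $\op{ch}_k(\F) = \frac{1}{k!}\op{tr}((\Theta-\Xi)^k)$. It therefore suffices to show
\[
(\id\otimes\delta_{2k-1})\circ\cdots\circ(\id\otimes\delta_k)\bigl(\tfrac{1}{k!}(\Theta-\Xi)^k\bigr) = \xi^k
\]
modulo coboundaries and modulo terms that vanish after taking the trace.

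The main tool is a Leibniz rule for the connecting homomorphisms. Each $\delta_\ell$ is computed by lifting a cocycle on $X$ to forms on $\mathbb P^{n+1}|_X$, applying $d_{\check C}$, and dividing by $dQ$. The \v{C}ech differential is a derivation for the Alexander--\v{C}ech--Whitney product. The cochains $\Theta$ and $\Xi$ lift canonically, since their formulas make sense ambiently. Hence
\[
d_{\check C}\bigl((\Theta-\Xi)^k\bigr)=\sum_{a}(\pm)(\Theta-\Xi)^{a}\, d_{\check C}(\Theta-\Xi)\,(\Theta-\Xi)^{k-1-a}.
\]
By \eqref{d cech T1} and \eqref{d cech T2}, $d_{\check C}(\Theta-\Xi) = -dQ\wedge \xi$. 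The one-forms anticommute with $dQ$, so $dQ$ can be factored out, giving $\delta_k$ of our class as $\sum_a \pm(\Theta-\Xi)^a\,\xi\,(\Theta-\Xi)^{k-1-a}$.

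After taking the trace, cyclic invariance (with Koszul signs for odd-degree forms and \v{C}ech degree) should collapse this sum to $k\,\op{tr}(\xi(\Theta-\Xi)^{k-1})$. The $1/k!$ then becomes $1/(k-1)!$. We iterate: since $\xi$ carries no form degree and $dQ\wedge dQ = 0$, we get $d_{\check C}(\xi)=0$ on $X$ after lifting. So at each subsequent step only the remaining $(\Theta-\Xi)$ factors get differentiated, each producing another $\xi$. After $k$ steps the form degree drops from $k$ to $0$, and we obtain $\frac{k!}{k!}\op{tr}(\xi^k)$ up to a global sign that we track.

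The main obstacle is justifying this inductive bookkeeping. First, the intermediate classes $\op{tr}(\xi^a(\Theta-\Xi)^{k-a})$ must genuinely be the $\delta$-images. This requires checking that products of the lifts are valid lifts: the lifts are cochains valued in ambient forms restricted to $X$, and the residual terms proportional to $Q$ vanish on $X$. Second, the signs from commuting $dQ$, the one-forms, and the \v{C}ech degrees through the cup product must cancel coherently so that the cyclic collapse gives exactly the factor $k-a$ at each stage rather than an alternating sum. We will handle this by working throughout with the total (form degree $+$ \v{C}ech degree) grading, under which $d_{\check C}$ is an odd derivation and the graded trace is cyclic. If instead the matrix factors fail to commute enough for cyclicity, the fallback is to verify the $k=1$ and $k=2$ cases by hand and argue inductively on $a$.
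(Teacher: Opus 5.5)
\paragraph{Verdict: same strategy as the paper, but the cyclic-collapse step is a genuine gap.}
Your overall strategy matches the paper's. You lift $\Psi:=\Theta-\Xi$ ambiently, apply the Leibniz rule for $d_{\check{C}}$ with $d_{\check{C}}(\Psi)=-\xi\,dQ$ and the closedness of $\xi$, and iterate $k$ times. Your final count $k!/k!$ is also right.

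What fails is how you organize the iteration, namely collapsing by cyclic invariance to a single word $\op{tr}(\xi^a\Psi^{k-a})$ with factor $k-a$ at each stage. At the first step this is fine, since every $\Psi^{a}\xi\Psi^{k-1-a}$ is a cyclic rotation of $\xi\Psi^{k-1}$. But applying the next connecting map to $\op{tr}(\xi\Psi^{k-1})$ gives $\sum_{b}\op{tr}(\xi\Psi^{b}\xi\Psi^{k-2-b})$. For $k\ge 4$ and $0<b<k-2$ these words are not cyclic rotations of $\xi^2\Psi^{k-2}$. Already for $k=4$ you get $\op{tr}(\xi\Psi\xi\Psi)$ alongside $\op{tr}(\xi^2\Psi^2)$, and nothing forces these to agree in $H^{6}(X,\Omega_X^{2}(-2m))$. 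So the claim you flag as the main obstacle, that the intermediate $\delta$-images are the classes $\op{tr}(\xi^a\Psi^{k-a})$ with factor $k-a$, cannot come from cyclicity and has no reason to hold. The count of terms is right; their identification with a single cyclic word is not.

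\paragraph{The repair is the paper's bookkeeping.}
Do not take traces until the very end. Work with $\End(\mathcal E_0)$-valued cochains and prove by induction the statement \eqref{eq: inductions step on tilde deltas}: after $\ell$ steps, the image of $\Psi^k$ is $\ell!$ times the sum, over $1\le i_1<\cdots<i_\ell\le k$, of the word with $\xi$ in positions $i_1,\dots,i_\ell$ and $\Psi$ elsewhere. The induction step is just the count that each $(\ell+1)$-subset of positions arises from exactly $\ell+1$ of the $\ell$-subsets. At $\ell=k$ the only word is $\xi^k$, with coefficient $k!$, and only then do you apply $\op{tr}$ using \eqref{comm diagram d cech}.

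\paragraph{Signs.} Your worry resolves as you anticipated. $\Psi$ has \v{C}ech degree $1$ and form degree $1$, so the Leibniz sign from passing $d_{\check{C}}$ through $\Psi^{i-1}$ cancels the sign from moving $dQ$ to the front. Every term therefore carries the same sign, and no alternating sum appears.

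\paragraph{Closedness of $\xi$.} The reason $d_{\check{C}}(\xi)=0$ on $X$ is not $dQ\wedge dQ=0$. It holds because $\xi$ is the output of a connecting map: $d_{\check{C}}(\xi)\,dQ=-d_{\check{C}}^2(\hat\Psi)=0$, and multiplication by $dQ$ is injective, being the first map of the conormal sequence.
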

\begin{proof}
For brevity, write $\Psi := \Theta - \Xi$ and $\tilde\delta_\ell = \id \otimes \delta_\ell$.  Using Theorem~\ref{thm: chern character} and ~\eqref{comm diagram d cech}, we see
\begin{equation}\label{first step for prop for sk}
 \delta_{2k-1} \circ \cdots \circ \delta_k(\op{ch}_k(\F)) = \op{tr}(\tilde \delta_{2k-1} \circ \cdots \circ \tilde\delta_k(\Psi^k)).
 \end{equation}
We claim  
\begin{equation}\label{eq: inductions step on tilde deltas}
\tilde \delta_{k+(\ell-1)}  \circ \cdots \circ \tilde\delta_k(\Psi^k) =  \ell! \sum_{1 \le i_1 < \dots < i_\ell \le k} \Psi^{i_1-1} \xi \Psi^{i_2-i_1-1} \xi \cdots \xi \Psi^{i_{\ell}-i_{\ell-1}-1} \xi \Psi^{k-i_\ell}
\end{equation}
and prove it by induction. The case where $\ell = 0$ is clear.  Using that $d_{\check{C}}(\Psi) = -sdQ$, we compute
\begin{equation}\begin{aligned}\label{base step dc}
d_{\check{C}}(\Psi^k) &= \sum_{1 \le i_1 \le k} (-1)^{i_1} \Psi^{i_1-1} d_{\check{C}}(\Psi) \Psi^{k-i_1} \\ 
    &= \sum_{1 \le i_1 \le k}(-1)^{i_1} \Psi^{i_1-1} (-\xi dQ) \Psi^{k-i_1} \\ 
    &= \sum_{1 \le i_1 \le k} dQ \Psi^{i_1-1} \xi \Psi^{k-i_1} \\  \tilde \delta_k(\Psi^k) &= \sum_{1 \le i_1 \le k}  \Psi^{i_1-1} \xi \Psi^{k-i_1},
\end{aligned}\end{equation}
proving the $\ell=1$ case.

Using \eqref{base step dc}, we prove the induction step.
\begin{equation}\begin{aligned}
   d_{\check{C}}&(\tilde \delta_{k+(\ell-1)}  \circ \cdots \circ \tilde\delta_k((\Theta-\Xi)^k)) \\&= \ell! \sum_{1 \le i_1 < \dots < i_\ell \le k}  d_{\check{C}}(\Psi^{i_1-1} \xi \Psi^{i_2-i_1-1} \xi \cdots \xi \Psi^{i_{\ell}-i_{\ell-1}-1} \xi \Psi^{k-i_\ell})\\
   &= \ell! \left[\sum_{1 \le i_1 < \dots < i_\ell \le k}\sum_{1 \le q \le i_1-1} dQ \Psi^{q-1}\xi \Psi^{i_1-1}\xi  \Psi^{i_2-i_1-1} \xi \cdots \xi \Psi^{i_{\ell}-i_{\ell-1}-1} \xi \Psi^{k-i_\ell}) +\right.\\
   & \qquad   \sum_{1 \le i_1 < \dots < i_\ell \le k} \sum_{p=1}^{\ell - 1} (-1)^{i_p-p} \Psi^{i_1-1} \xi \cdots s \left(\sum_{q = 1}^{i_{p+1}-i_{p}-1}dQ \Psi^{q-1} \xi \Psi^{i_{p+1} - i_{p} - q-1} \right) \xi \cdots \xi \Psi^{k-i_\ell} \\
    & \qquad  + \left.\sum_{1 \le i_1 < \dots < i_\ell \le k} (-1)^{i_k-k} \Psi^{i_1-1} \xi \Psi^{i_2-i_1-1} \xi \cdots \xi \Psi^{i_{\ell}-i_{\ell-1}-1} \xi \left(\sum_{q=1}^{k-i_\ell}  dQ\Psi^{q-1} \xi \Psi^{k-i_\ell-q}\right)\right] \\
    &= \ell!dQ \left[\sum_{1 \le i_1 < \dots < i_\ell \le k}\sum_{1 \le q \le i_1-1}  \Psi^{q-1} \xi \Psi^{i_1-1}s \Psi^{i_2-i_1-1} \xi \cdots \xi \Psi^{i_{\ell}-i_{\ell-1}-1} \xi \Psi^{k-i_\ell}) +\right.\\
   & \qquad   \sum_{1 \le i_1 < \dots < i_\ell \le k} \sum_{p=1}^{\ell - 1} \Psi^{i_1-1} \xi \cdots \xi \left(\sum_{q = 1}^{i_{p+1}-i_{p}-1}\Psi^{q-1} \xi \Psi^{i_{p+1} - i_{p} - q-1} \right) \xi \cdots \xi \Psi^{k-i_\ell} \\
    & \qquad  + \left.\sum_{1 \le i_1 < \dots < i_\ell \le k} \Psi^{i_1-1} \xi \Psi^{i_2-i_1-1} \xi \cdots s \Psi^{i_{\ell}-i_{\ell-1}-1} \xi \left(\sum_{q=1}^{k-i_\ell}  \Psi^{q-1} \xi \Psi^{k-i_\ell-q}\right)\right] \\
    &= (\ell+1)! dQ \sum_{1 \le i_1 < \dots < i_{\ell+1} \le k} \Psi^{i_1-1} \xi \Psi^{i_2-i_1-1} \xi \cdots \xi \Psi^{i_{\ell+1}-i_{\ell}-1} \xi \Psi^{k-i_{\ell+1}}.
\end{aligned}\end{equation}
The induction step for~\eqref{eq: inductions step on tilde deltas} then follows immediately. Taking $\ell = k$ and plugging into~\eqref{first step for prop for sk}, we conclude 
\begin{align*}
   \delta_{2k-1} \circ\cdots \circ\delta_{k} (\op{ch}_k(\F)) = \frac{1}{k!}\op{tr}(\tilde \delta_{2k-1} \circ \cdots \circ \tilde\delta_k(\Psi^k)) = \frac{1}{k!}\op{tr} (k! \xi^k) = \op{tr}(\xi^k).
\end{align*}
\end{proof}
 
We introduce the following notation for brevity.
Take $k$ to be a positive integer. We denote
\begin{align*}
\partial_{i_1,i_2, \dots, i_{2k}}^A &:= \partial_{i_1}A\partial_{i_2}B \cdots \partial_{i_{2k-1}}A\partial_{i_{2k}} B;  \qquad  \partial_{i_1,i_2, \dots, i_{2k}}^B := \partial_{i_1}B\partial_{i_2}A \cdots \partial_{i_{2k-1}}B\partial_{i_{2k}} A;  \\
\partial_{i_1,i_2, \dots, i_{2k+1}}^A &:= \partial_{i_1}A\partial_{i_2}B \cdots \partial_{i_{2k}}B\partial_{i_{2k+1}} A;  \qquad \partial_{i_1,i_2, \dots, i_{2k+1}}^B := \partial_{i_1}B\partial_{i_2}A \cdots \partial_{i_{2k-1}}A\partial_{i_{2k+1}} B.
\end{align*}

Recall our goal is to compute $\pmb{\delta}(\op{ch}_k,\mathcal F)$. By Lemma~\ref{lem: chern to s to the k}, we have $\pmb{\delta}(\op{ch}_k,\mathcal F) =\op{tr}(\op{id}\otimes \delta_{\mathbb{P}^{2k+1}}(\xi^k))$.
This final connecting homomorphism $\delta_{\mathbb{P}^{2k+1}}$  lifts $\xi^k$ to  $\hat \xi^k$ on $\mathbb{P}^{2k+1}$, takes the \v{C}ech differential, and divides by $Q$. We start this computation by lifting $\xi$ to $\hat \xi$ and compute that
\begin{equation}\label{s tilde dcech}
d_{\check{C}} (\hat \xi) = \left(\frac{B\partial_{i_0, i_1, i_2, i_3}^A A - Q \partial_{i_0, i_1, i_2, i_3}^B}{Q_{i_0} Q_{i_1} Q_{i_2}Q_{i_3}}\right)_{ \{i_0, i_1, i_2, i_3\}}.
\end{equation}

\begin{lemma}\label{d cech s hat}
We have
$$ d_{\check{C}}( \hat \xi^k) =  \frac{B\partial^A_{0, \dots, 2k+1}A - Q\partial^B_{0, \dots, 2k+1}}{Q_0 \cdots Q_{2k+1}}.
$$
\end{lemma}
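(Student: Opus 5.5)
The plan is to prove a slightly stronger, cochain-level identity by induction on $k$, valid on every $(2k+1)$-fold intersection of the cover $\widetilde{\mathcal U}$ of $\mathbb P^{n+1}$, not only the top one. For an ordered tuple $I=(i_0,\dots,i_{2k+1})$ of indices, the claim is
\begin{equation}\label{eq: general dcech xi hat}
d_{\check{C}}(\hat\xi^k)_{I} = \frac{B\,\partial^A_{i_0,\dots,i_{2k+1}}A - Q\,\partial^B_{i_0,\dots,i_{2k+1}}}{Q_{i_0}\cdots Q_{i_{2k+1}}}.
\end{equation}
The statement of the lemma is then the special case $I=(0,1,\dots,2k+1)$, the only nonempty top-degree intersection when $n=2k$. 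The base case $k=1$ is exactly \eqref{s tilde dcech}.

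\textbf{Step 1: a closed form for $\hat\xi^k$.} I first write $\hat\xi^k$ explicitly via the Alexander--\v{C}ech--Whitney product. On $(j_0,\dots,j_{2k})$ it is the product of the factors $\hat\xi_{j_{2p},j_{2p+1},j_{2p+2}}$, $p=0,\dots,k-1$. Consecutive factors share the index $j_{2p+2}$, which produces the string $\partial_{j}A\,B\,\partial_{j}A$ and an extra $Q_j$ in the denominator. The basic identity
\[
B\partial_jA+\partial_jB\,A=Q_j,
\]
obtained by differentiating $BA=Q$, gives $\partial_jA\,B\,\partial_jA = Q_j\,\partial_jA-\partial_jA\,\partial_jB\,A$. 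So each junction contributes the expected $\partial_jA$, with the spurious $Q_j$ cancelling, plus a correction term containing $A$.

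\textbf{Step 2: the inductive step.} Since $\hat\xi$ has even \v{C}ech degree $2$, the Leibniz rule reads
\[
d_{\check{C}}(\hat\xi^k)=d_{\check{C}}(\hat\xi)\,\hat\xi^{k-1}+\hat\xi\, d_{\check{C}}(\hat\xi^{k-1}).
\]
I substitute \eqref{s tilde dcech} into the first term and the inductive hypothesis \eqref{eq: general dcech xi hat} for $k-1$ into the second, then evaluate both on $I$. The two products again meet at a shared index. At that junction I apply the identity $B\partial_jA+\partial_jB\,A=Q_j$ (and its companion $\partial_jA\,B+A\,\partial_jB=Q_j$) to merge $\partial_j A\,B\,\partial_j A$, or $A\,B$, into a single factor. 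The relation $AB=BA=Q$ is used to convert the terms in which $A$ meets $B$ directly into $Q$ times something.

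After this, the pieces should reorganise into three groups:
\begin{itemize}
\item the leading term $B\partial^A_IA$;
\item the term $-Q\,\partial^B_I$;
\item leftover terms, which I expect to cancel pairwise between the two Leibniz summands, because the correction coming from one junction in the first summand matches, with opposite sign, the one from the second.
\end{itemize}

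\textbf{Main obstacle.} The hard part is this cancellation bookkeeping in Step 2: tracking the signs from the \v{C}ech differential and the product, and checking that every $A$ sitting in the interior of a word is absorbed either via $BA=Q$ or by a matching term. If the pairwise cancellation fails on the nose, my fallback is weaker. I would prove \eqref{eq: general dcech xi hat} only modulo cochains that are $Q$ times a \v{C}ech cocycle, or that vanish after the trace and division by $Q$. That weaker form suffices for the application to $\delta_{\mathbb P^{2k+1}}$.
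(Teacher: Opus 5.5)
\textbf{There is a genuine gap: the on-the-nose identity in Step 2 is false, and the key mechanism is missing.}

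\paragraph{The cochain-level identity fails for $k=2$.} Your inductive step uses nothing beyond $AB=BA=Q$ and its derivatives. It therefore applies verbatim to a rank-one factorization $A=a$, $B=b$, $Q=ab$. Write $a_i=\partial_i a$ and $b_i=\partial_i b$, so everything commutes and $Q_i=a_ib+ab_i$. Compute $d_{\check{C}}(\hat\xi^2)_{0\cdots 5}=(d_{\check{C}}\hat\xi)_{0123}\,\hat\xi_{345}+\hat\xi_{012}\,(d_{\check{C}}\hat\xi)_{2345}$ from \eqref{s tilde dcech}. One finds
\begin{multline*}
d_{\check{C}}(\hat\xi^2)_{0\cdots 5}-\frac{Q\,(a_0b_1a_2b_3a_4b_5-b_0a_1b_2a_3b_4a_5)}{Q_0\cdots Q_5}\\
=Q\Bigl(\frac{b_1a_2a_3b_3b_4a_5}{Q_1Q_2Q_3^2Q_4Q_5}-\frac{b_0a_2a_3b_3b_4a_5}{Q_0Q_2Q_3^2Q_4Q_5}+\frac{b_0a_1a_3b_3b_4a_5}{Q_0Q_1Q_3^2Q_4Q_5}\\
-\frac{a_0b_1a_2b_2a_4b_5}{Q_0Q_1Q_2^2Q_4Q_5}+\frac{a_0b_1a_2b_2a_3b_5}{Q_0Q_1Q_2^2Q_3Q_5}-\frac{a_0b_1a_2b_2a_3b_4}{Q_0Q_1Q_2^2Q_3Q_4}\Bigr).
\end{multline*}
This is not zero: it equals $1/32$ for $a=b=a_i=1$ and $(b_0,\dots,b_5)=(0,1,0,1,1,1)$. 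So the leftover terms do not cancel between the two Leibniz summands, however carefully the signs are tracked.

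Notice, though, that each leftover term omits one $Q_q$ from its denominator. On the top intersection of the $(2k+2)$-set cover $\widetilde{\mathcal U}$ of $\mathbb P^{2k+1}$, such a cochain is a coboundary. It also carries the factor $Q$, so it is invisible to $\delta_{\mathbb P^{2k+1}}$. The lemma holds only in this sense: modulo $Q$ times coboundaries, after taking the trace.

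\paragraph{Your fallback names the wrong equivalence.} In top degree every cochain is a cocycle. So working ``modulo $Q$ times a cocycle'' kills both sides, which are divisible by $Q$ after the trace, and proves nothing. The leftover terms also do not ``vanish after trace''; they are coboundaries, not zero.

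\paragraph{The missing idea is the mechanism of the paper's proof.}
\begin{enumerate}
\item Fix $k$ and expand $d_{\check{C}}(\hat\xi^k)=\sum_\ell\hat\xi^{\ell-1}\,d_{\check{C}}(\hat\xi)\,\hat\xi^{k-\ell}$ directly on $(0,\dots,2k+1)$.
\item Rewrite each bracket by pushing $B$ through the word, using $B\partial_jA=Q_j-\partial_jB\,A$ and $A\partial_jB=Q_j-\partial_jA\,B$:
\[
B\partial^A_{i_1,\dots,i_{2s}}A-Q\,\partial^B_{i_1,\dots,i_{2s}}=\sum_{q=1}^{2s}(-1)^{q-1}Q_{i_q}\,\partial^B_{i_1,\dots,\widehat{i_q},\dots,i_{2s}}A .
\]
\item For an interior index, $Q_{i_q}$ cancels a denominator factor that occurs only once, so the term is a top-degree coboundary. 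Its trailing $A$ meets a $B$, either directly or cyclically under the trace. Hence the term is $Q$ times a coboundary and may be dropped.
\item The endpoint indices are the junction indices shared with neighbouring $\hat\xi$ factors, and they appear squared in the denominator. At the two ends of $(0,\dots,2k+1)$ there is no neighbour, and those terms are dropped as well.
\item The left-endpoint term of one summand and the right-endpoint term of the adjacent summand merge with the neighbouring $B\partial^A$ factor. They produce $B\partial^AA$ and $-Q\partial^B$ on a longer index string.
\item Induct on the length of this bracket, as in the paper's induction on $r$.
\end{enumerate}

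This also shows why inducting on $k$ does not work. The step from $k=1$ to $k=2$ already fails on the nose. Moreover, discarding coboundaries is legitimate only in top degree $2k+1$, so it is unavailable for $\hat\xi^{k-1}$.
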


\begin{proof}
To prove the lemma, we use induction on $r$ to show that, for $0 \le r < k$, we have that $\delta_{\mathbb{P}^{2k+1}}( \hat \xi^k)$ equals 
\begin{equation}\label{KL Claim 1}
    \sum_{\ell=1}^{k-r} \frac{\left(\prod_{1 \le p < \ell} B\partial^A_{2p, 2p-1, 2p}\right) \left[B \partial_{2\ell-2, \dots, 2\ell+2r+1}^A A - Q\partial_{2\ell-2, \dots, 2\ell+2r+1}^B \right] \left(\prod_{\ell+r < p \le k} B \partial^A_{2p-1, 2p, 2p+1}\right)}{\left(\prod_{1 \le p < \ell} Q_{2m}\right) \left(\prod_{\ell+r\le m < k} Q_{2m+1}\right) Q_0 \cdots Q_{2k+1}}.
\end{equation}
The statement of the lemma is just the case where $r = k-1$.

First, we use ~\eqref{s tilde dcech} to compute
\begin{align*}
   d_{\check{C}}( \hat \xi^k) &= \sum_{\ell = 1}^k \hat \xi^{\ell-1}  d_{\check{C}}( \hat \xi) \hat \xi^{k-\ell} \\
   &= \sum_{\ell=1}^k \frac{\left(\prod_{1 \le p < \ell} B\partial^A_{2p, 2p-1, 2p}\right) \left[B \partial_{2\ell-2, 2\ell-1, 2\ell, 2\ell+1}^A A - Q\partial_{2\ell-2, 2\ell-1, 2\ell, 2\ell+1}^B \right] \left(\prod_{\ell < p \le k} B \partial^A_{2p-1, 2p, 2p+1}\right)}{\left(\prod_{1 \le p < \ell} Q_{2m}\right) \left(\prod_{\ell\le m < k} Q_{2m+1}\right) Q_0 \cdots Q_{2k+1}},
\end{align*}
thus the $r=0$ case of the claim in ~\eqref{KL Claim 1} holds. Note there are no signs in the above equation as each \v{C}ech cocycle $\hat \xi$ is even. We next perform induction. 

Before doing so, we note that one can use induction and the identity $Q_i = \partial_iAB + A\partial_iB$ to show that when $\ell$ is even the following identity holds
\begin{equation}\label{KL identity 1}
B\partial_{i_1, \dots, i_\ell}^AA -Q\partial_{i_1, \dots, i_\ell}^B= \sum_{k=1}^\ell (-1)^{k-1} Q_{i_k} \partial_{i_1, \dots, \hat i_k, \dots, i_\ell}^BA.
\end{equation}
We suppose that ~\eqref{KL Claim 1} is true for a fixed $r$ less than $k-1$. We use~\eqref{KL identity 1} to reduce ~\eqref{KL Claim 1} to

\begin{equation}\begin{aligned}\label{eq: KL proof 1}
    \sum_{\ell=1}^{k-r}& \frac{\left(\prod_{1 \le p < \ell} B\partial^A_{2p, 2p-1, 2p}\right) \left[B \partial_{2\ell-2, \dots, 2\ell+2r+1}^A A - Q\partial_{2\ell-2, \dots, 2\ell+2r+1}^B \right] \left(\prod_{\ell+r < p \le k} B \partial^A_{2p-1, 2p, 2p+1}\right)}{\left(\prod_{1 \le p < \ell} Q_{2m}\right) \left(\prod_{\ell+r\le m < k} Q_{2m+1}\right) Q_0 \cdots Q_{2k+1}}.\\
 &=    \sum_{\ell=1}^{k-r} \frac{\left(\prod_{1 \le p < \ell} B\partial^A_{2p, 2p-1, 2p}\right) \left[ \sum_{q=2\ell-2}^{2\ell+2r+1} (-1)^q Q_q \partial^B_{2\ell-2, \dots, \hat q, \dots, 2\ell+2r+1}A \right] \left(\prod_{\ell+r < p \le k} B \partial^A_{2p-1, 2p, 2p+1}\right)}{\left(\prod_{1 \le p < \ell} Q_{2m}\right) \left(\prod_{\ell+r\le m < k} Q_{2m+1}\right) Q_0 \cdots Q_{2k+1}} .
\end{aligned}\end{equation}

Now consider the quantity

$$ \frac{\left(\prod_{1 \le p < \ell} B\partial^A_{2p, 2p-1, 2p}\right) \left[  (-1)^q Q_q \partial^B_{2\ell-2, \dots, \hat q, \dots, 2\ell+2r+1}A \right] \left(\prod_{\ell+r < p \le k} B \partial^A_{2p-1, 2p, 2p+1}\right)}{\left(\prod_{1 \le p < \ell} Q_{2m}\right) \left(\prod_{\ell+r\le m < k} Q_{2m+1}\right) Q_0 \cdots Q_{2k+1}}$$
when $q \ne 2\ell-2, 2\ell+2r+1$, which equals

$$
 \frac{\left(\prod_{1 \le p < \ell} B\partial^A_{2p, 2p-1, 2p}\right) \left[  (-1)^q  \partial^B_{2\ell-2, \dots, \hat q, \dots, 2\ell+2r+1}A \right] \left(\prod_{\ell+r < p \le k} B \partial^A_{2p-1, 2p, 2p+1}\right)}{\left(\prod_{1 \le p < \ell} Q_{2m}\right) \left(\prod_{\ell+r\le m < k} Q_{2m+1}\right) Q_0 \cdots \widehat{Q_q} \cdots Q_{2k+1}}.
$$
The denominator has no $Q_q$ term, hence it is a coboundary and vanishes in cohomology. Thus~\eqref{eq: KL proof 1} reduces to
 
 \begin{equation}\begin{aligned}
 & \sum_{\ell=2}^{k-r} \frac{\left(\prod_{1 \le p < \ell} B\partial^A_{2p, 2p-1, 2p}\right) \left[   Q_{2\ell+2} \partial^B_{2\ell-1, \dots, 2\ell+2r+1}A \right] \left(\prod_{\ell+r < p \le k} B \partial^A_{2p-1, 2p, 2p+1}\right)}{\left(\prod_{1 \le p < \ell} Q_{2m}\right) \left(\prod_{\ell+r\le m < k} Q_{2m+1}\right) Q_0 \cdots Q_{2k+1}} \\
 & \quad - \sum_{\ell=1}^{k-r-1} \frac{\left(\prod_{1 \le p < \ell} B\partial^A_{2p, 2p-1, 2p}\right) \left[   Q_{2\ell+2r+1} \partial^B_{2\ell-2, \dots, 2\ell+2r}A \right] \left(\prod_{\ell+r < p \le k} B \partial^A_{2p-1, 2p, 2p+1}\right)}{\left(\prod_{1 \le p < \ell} Q_{2m}\right) \left(\prod_{\ell+r\le m < k} Q_{2m+1}\right) Q_0 \cdots Q_{2k+1}} \\
\end{aligned} \end{equation}
Continuing, the quantity above equals
\begin{equation}\begin{aligned}
 & \sum_{\ell=2}^{k-r} \frac{\left(\prod_{1 \le p < \ell-1} B\partial^A_{2p, 2p-1, 2p}\right) \left[  B\partial^B_{2\ell-4, \dots, 2\ell+2r+1}A \right] \left(\prod_{\ell+r < p \le k} B \partial^A_{2p-1, 2p, 2p+1}\right)}{\left(\prod_{1 \le p < \ell-1} Q_{2m}\right) \left(\prod_{\ell+r\le m < k} Q_{2m+1}\right) Q_0 \cdots Q_{2k+1}} \\
 & \quad - \sum_{\ell=1}^{k-r-1} \frac{\left(\prod_{1 \le p < \ell} B\partial^A_{2p, 2p-1, 2p}\right) \left[    \partial^B_{2\ell-2, \dots, 2\ell+2r}AB\partial^{A}_{2\ell+2r+1,\dots, 2\ell+2r+3} \right] \left(\prod_{\ell+r+1 < p \le k} B \partial^A_{2p-1, 2p, 2p+1}\right)}{\left(\prod_{1 \le p < \ell} Q_{2m}\right) \left(\prod_{\ell+r+1\le m < k} Q_{2m+1}\right) Q_0 \cdots Q_{2k+1}} \\
  &\qquad = \sum_{\ell=1}^{k-r-1} \frac{\left(\prod_{1 \le p < \ell} B\partial^A_{2p, 2p-1, 2p}\right) \left[  B\partial^B_{2\ell-2, \dots, 2\ell+2r+3}A \right] \left(\prod_{\ell+r+1 < p \le k} B \partial^A_{2p-1, 2p, 2p+1}\right)}{\left(\prod_{1 \le p < \ell} Q_{2m}\right) \left(\prod_{\ell+r+1\le m < k} Q_{2m+1}\right) Q_0 \cdots Q_{2k+1}} \\
 &\qquad \quad - \sum_{\ell=1}^{k-r-1} \frac{\left(\prod_{1 \le p < \ell} B\partial^A_{2p, 2p-1, 2p}\right) \left[   Q \partial^B_{2\ell-2, \dots, 2\ell+2r+3} \right] \left(\prod_{\ell+r+1 < p \le k} B \partial^A_{2p-1, 2p, 2p+1}\right)}{\left(\prod_{1 \le p < \ell} Q_{2m}\right) \left(\prod_{\ell+r+1\le m < k} Q_{2m+1}\right) Q_0 \cdots Q_{2k+1}} \\
\end{aligned}\end{equation}
After combining the two sums in the last line, we get ~\eqref{KL Claim 1} with $r+1$ instead of $r$. This proves the claim and hence the lemma.
\end{proof}
We now prove Theorem~\ref{thm: KL}.

\begin{proof}[Proof of Theorem~\ref{thm: KL}]
Combining our previous results, we have
\begin{align*}
\pmb{\delta}(\op{ch}_k(\F)) & = 
\delta_{\mathbb P^{n+1}}(\op{tr}(\xi^k)) & \text{by \Cref{lem: chern to s to the k};} \\
& = \op{tr}\left(\frac{B\partial^A_{0, \dots, 2k+1}A - Q\partial^B_{0, \dots, 2k+1}}{Q}\right) & \text{by \Cref{d cech s hat};  } \\
& = \op{tr}\left(\frac{AB\partial^A_{0, \dots, 2k+1} - Q\partial^B_{0, \dots, 2k+1}}{Q}\right) & \text{by cyclic invariance of trace;} \\
& = \op{tr}(\partial^A_{0, \dots, 2k+1} - \partial^B_{0, \dots, 2k+1}) & \text{since $AB=Q$;} \\
& = \pmb{\delta} \left(\frac{(-1)^kc_k}{m}\op{res}\Omega^{\op{tr}(\partial^A_{0, \dots, 2k+1} - \partial^B_{0, \dots, 2k+1})}\right) & \text{ by \Cref{lem: total of connecting homs}}.
\end{align*}
Therefore, by \Cref{cor: connecting composition}, 
\[
\op{ch}_k^{\op{prim}}(\F) = \frac{(-1)^kc_k}{m}\op{res}\Omega^{\op{tr}(\partial^A_{0, \dots, 2k+1} - \partial^B_{0, \dots, 2k+1})}
\]
and the result follows from the Griffiths residue theorem (see \Cref{Griffiths Residue Theorem}).
\end{proof}

\subsection{Chern characters for complete intersections}

The most basic matrix factorization we can construct is called the \newterm{Koszul Factorization} which we now explain.  To begin, we setup some notation and observations about basic operators on the exterior algebra $\Lambda^\bullet V$ on $V = k^{r}$  with basis $e_1, ..., e_r$.  For $1 \leq i \leq r$, denote the wedge products by
\begin{align*}
\theta_i^\ell : \Lambda^\ell V & \to \Lambda^{\ell+1} V \\
v & \mapsto v \wedge e_i
\end{align*}
Identifying $V = V^*$ using the basis $e_1, ..., e_r$, we also consider the contraction maps and index them with the continued enumeration
\begin{align*}
\theta^\ell_{p+r} : \Lambda^\ell V & \to \Lambda^{\ell-1} V \\
e_{i_1} \wedge ... \wedge e_{i_\ell} & \mapsto \begin{cases}
(-1)^{\ell-p} e_{i_1} \wedge ... \wedge \widehat{e_{i_p}} \wedge ... \wedge e_{i_\ell} & \text{ if } i_p \in \{i_1, ..., i_\ell \} \\
0 & \text{ if } i_p \notin \{i_1, ..., i_\ell \} 
\end{cases}
\end{align*}
where $i_1 < ... < i_\ell$.  
By summing these operators, we obtain new operators
\begin{align*}
\theta_i^0 &:= \sum_{\ell \equiv 0 \mod 2} \theta_i^\ell: \Lambda^{\text{even}} V \to \Lambda^{\text{odd}} V,\\
\theta_i^1 &:= \sum_{\ell \equiv 1 \mod 2} \theta_i^\ell: \Lambda^{\text{odd}} V \to \Lambda^{\text{even}} V,
\end{align*}
and
\[
\theta_i := \sum_{\ell} \theta_i^\ell = \theta_i^0+\theta_i^1: \Lambda^{\bullet} V \to \Lambda^{\bullet} V.
\]
For a composition $\theta_{i_1} \cdots \theta_{i_{2p}}$, we denote by 
\[
\op{str}(\theta_{i_1} \cdots \theta_{i_{2p}}) = \op{tr}(\theta_{i_1}^1 \cdots \theta_{i_{2p}}^0) - \op{tr}(\theta_{i_1}^0 \cdots \theta_{i_{2p}}^1) 
\]
the supertrace of the operator $\theta_{i_1} \cdots \theta_{i_{2p}}$.  We first prove the following basic result about this supertrace.
\begin{lemma}
\label{lem: trace of exterior operators}
We have
\[
\op{str}(\theta_{i_1} \cdots \theta_{i_{2p}}) 
= \begin{cases}
(-1)^r\op{sgn}(\sigma) & \text{ if $p=r$ and $i_1 \neq \dots \neq i_{2p}$}  \\
0 & \text{ if $p < r$} 
\end{cases}
\]
where $\op{sgn}(\sigma)$ is the sign of the permutation $\sigma$ sending $\{i_1, \dots, i_\ell\}$ to $\{1, \dots, 2r\}$. 
\end{lemma}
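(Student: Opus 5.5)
The plan is to recognize the operators $\theta_1,\dots,\theta_{2r}$ as Clifford generators on $\Lambda^\bullet V$ and compute the supertrace through the standard tensor-product description of $\Lambda^\bullet V$.

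\textbf{Tensor decomposition.} Write $\Lambda^\bullet V\cong \bigotimes_{a=1}^r \Lambda^\bullet(k e_a)$, where each factor $\Lambda^\bullet(ke_a)=k\oplus ke_a$ is $2$-dimensional and carries the $\Z/2$-grading. Under this identification (with the Koszul sign rule) we have:
\begin{itemize}
\item $\theta_a$ (wedge with $e_a$) acts as $\epsilon\otimes\cdots\otimes\epsilon\otimes \iota_a\otimes 1\otimes\cdots\otimes 1$, with $\iota_a$ in slot $a$;
\item $\theta_{a+r}$ (contraction) acts the same way with $\iota_a$ replaced by $\iota_a^*$.
\end{itemize}
Here $\epsilon$ is the grading operator, $\iota_a$ is the raising map $1\mapsto e_a$, and $\iota_a^*$ is the lowering map $e_a\mapsto \pm 1$. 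The sign convention $(-1)^{\ell-p}$ in the definition should produce exactly this (possibly up to a global sign that I will track). The supertrace $\op{str}(T)=\op{tr}(\epsilon T)$ is multiplicative over tensor products.

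\textbf{Vanishing.} If some $a\le r$ has neither $a$ nor $a+r$ among $i_1,\dots,i_{2p}$, then the $a$-th factor of $\epsilon\,\theta_{i_1}\cdots\theta_{i_{2p}}$ is a power of $\epsilon$ times $\epsilon$. Since the total product is even, this factor is $\epsilon^{\text{odd}}$ or $\epsilon^{\text{even}}$, and I need to check which. An easier route is to use the standard fact that the supertrace of a product of fewer than $2r$ Clifford generators, or of any product that is not a permutation of all $2r$ generators up to lower terms, vanishes.

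Concretely, use the relations $\theta_a^2=\theta_{a+r}^2=0$ and $\theta_a\theta_{a+r}+\theta_{a+r}\theta_a=\pm\id$, and anticommutation otherwise. These reduce any word to a signed sum of ordered monomials in distinct generators. A monomial omitting index $a$ or $a+r$ changes the $e_a$-occupation or commutes with the projection onto $e_a$-occupation. In the first case it has zero diagonal. In the second, pairing basis vectors with and without $e_a$ gives opposite parity and equal diagonal entries, so the supertrace is $0$.

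For $p<r$, any word of length $2p$ reduces to monomials of length at most $2p<2r$, so some index pair is missing from every monomial, and hence $\op{str}=0$. The same argument gives $0$ when $p=r$ but the indices are not all distinct: a repeat reduces the word to shorter monomials.

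\textbf{Top case.} For $p=r$ with distinct indices, reorder $\theta_{i_1}\cdots\theta_{i_{2r}}$ to $\theta_1\theta_{1+r}\theta_2\theta_{2+r}\cdots\theta_r\theta_{2r}$. Anticommutation of distinct generators gives the factor $\op{sgn}(\sigma)$, up to the fixed sign of the permutation relating this ordering to $(1,\dots,2r)$; I will absorb that sign by checking conventions.

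Each product $\theta_a\theta_{a+r}$ is, up to sign, the projection onto vectors not containing $e_a$ or onto vectors containing it. The product over all $a$ is then the projection onto a single line, $\Lambda^0$ or $\Lambda^r$, with a computable sign. Its supertrace is $\pm1$ with parity sign $(-1)^r$ when the line is $\Lambda^r$. So the answer is $(-1)^r\op{sgn}(\sigma)$ once the signs are fixed, which I would verify directly for $r=1$ and then induct via the tensor factorization.

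\textbf{Main obstacle.} I expect the real difficulty to be the sign bookkeeping. The given contraction convention $(-1)^{\ell-p}$, together with the supertrace convention (odd minus even as written), must combine to yield exactly $(-1)^r\op{sgn}(\sigma)$. I would settle this by computing the $r=1$ case explicitly on the $2$-dimensional space and then using multiplicativity.
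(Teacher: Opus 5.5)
The genuine gap is the sign step, and it cannot be closed the way you plan. You intend to ``absorb'' the sign of the permutation taking $(1,1+r,2,2+r,\dots,r,2r)$ to $(1,\dots,2r)$. That sign is $(-1)^{r(r-1)/2}$, and no choice of conventions removes it:

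\begin{itemize}
\item The relations $\theta_a^2=0$ and $\theta_a\theta_{a+r}+\theta_{a+r}\theta_a=\id$ force $\theta_a\theta_{a+r}$ to be the coordinate projection onto the span of the basis vectors containing $e_a$.
\item Hence $\prod_a\theta_a\theta_{a+r}$ is exactly the projection onto $\Lambda^rV$, with supertrace $(-1)^r$.
\item Also $\theta_1\cdots\theta_{2r}=(-1)^{r(r-1)/2}\,\theta_1\theta_{1+r}\theta_2\theta_{2+r}\cdots\theta_r\theta_{2r}$ on the nose, because the reordering never swaps a pair $\theta_a,\theta_{a+r}$.
\end{itemize}

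Completing your bookkeeping therefore gives
\[
\op{str}(\theta_{i_1}\cdots\theta_{i_{2r}})=(-1)^{r(r+1)/2}\op{sgn}(\sigma),
\]
which disagrees with the stated $(-1)^r\op{sgn}(\sigma)$ whenever $r\equiv 2,3\pmod 4$. Concretely, for $r=2$ the operator $\theta_1\theta_3\theta_2\theta_4$ fixes $e_1\wedge e_2$ and kills $1,e_1,e_2$, so its supertrace is $+1$. The formula predicts $(-1)^2\cdot(-1)=-1$. Your proposed check at $r=1$ cannot detect this.

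The paper's own proof reaches $(-1)^r$ only through the assertion $\theta_1\cdots\theta_{2r}(e_1\wedge\cdots\wedge e_r)=e_1\wedge\cdots\wedge e_r$. That assertion is off by the same factor: for $r=2$ the wedges produce $e_2\wedge e_1$. So your route, done carefully, proves the vanishing claims and the top case with the corrected universal sign $(-1)^{r(r+1)/2}$. \Cref{thm: koszul det chprim} only uses that the answer is a fixed nonzero multiple of $\op{sgn}(\sigma)$, so the discrepancy changes only the overall sign there.

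Apart from this, your outline is the paper's: kill every word that does not use all $2r$ generators via the tensor structure, then reorder to a top monomial and evaluate on $\Lambda^rV$. It needs three smaller repairs.

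\begin{itemize}
\item You claim the diagonal entries at $e_S$ and $e_{S\cup\{a\}}$ agree. This is exactly the $\epsilon^{\mathrm{odd}}$ versus $\epsilon^{\mathrm{even}}$ issue you raised and then set aside. The two entries differ by $(-1)^N$, where $N$ is the number of letters coming from pairs on one side of $a$. When $N$ is odd, some such pair contributes a single letter, so all diagonal entries vanish anyway; say this.
\item In the top case, $\theta_a$ and $\theta_{a+r}$ do not anticommute. Reordering a general word into your interleaved order therefore produces shorter words, and you must discard them using the vanishing part (the paper's ``lower order terms'').
\item $\op{tr}(\theta^1_{i_1}\cdots\theta^0_{i_{2p}})$ is a trace over $\Lambda^{\mathrm{even}}V$, since the rightmost factor acts on even degrees. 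So the paper's supertrace is even minus odd, i.e.\ your $\op{tr}(\epsilon T)$, not ``odd minus even'' as you read it.
\end{itemize}
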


\begin{proof}
Under the identification 
\[
\Lambda^\bullet V \cong (\Lambda^\bullet k)^{\otimes r} 
\]
an operator which does not use every index $i$, will either be an odd endomorphism, or the identity on some tensor summand.  Either way, the supertrace is zero.  Hence, to get something non-zero we must use every index.  This handles the case where $p < r$.

We now focus on the $p =r$ case. First, observe that
\begin{equation}\label{eq: koszul relations}
\theta_i\theta_j+\theta_j\theta_i = c_{ij}\id
\text{ and } \theta_i\theta_j+\theta_j\theta_i = c_{ij}\id,
\end{equation}
where 
\[
c_{ij} = \begin{cases}
   1 & \text{ if $|i-j| = r$},\\
   0 & \text{ otherwise}.
\end{cases}
\]
This allows us to reorder our operator with each permutation introducing a sign, so that 
\[
\theta_{i_1} \cdots \theta_{i_{2r}} = \op{sgn}(\sigma) \theta_{1} \cdots\theta_{2r} + \text{(lower order terms)}.
\]
Next, note that 
$$
\theta_{1} \cdots\theta_{2r}(e_1 \wedge\cdots \wedge e_r) = e_1 \wedge\cdots \wedge e_r; \quad \theta_{1} \cdots\theta_{2r}(e_{i_1} \wedge\cdots \wedge e_{i_\ell}) = 0 \text{ if $l<r$}, 
$$
hence $\op{str}(\theta_{1} \cdots\theta_{2r}) = (-1)^r$.
Applying supertrace and using the case where $p<r$, we get
\begin{align*}
\op{str}(\theta_{i_1} \cdots \theta_{i_{2r}}) & = \op{sgn}(\sigma)\op{str}(\theta_{1} \cdots\theta_{2r}) \\
& = (-1)^r\op{sgn}(\sigma),
\end{align*}
as desired.
\end{proof}

Suppose
\[
Q = \sum_{i = 1}^r a_ib_i.
\]
That is, let $t = \lfloor \frac{r}{2} \rfloor$ and
\[
E_0 :=  \bigoplus_{j=0}^{t} \Lambda^{2j} V \otimes_k R  \ \text{ and } E_1 :=  \bigoplus_{j=0}^{t} \Lambda^{2j+1} V \otimes_k R.
\]

The differential on the Koszul factorization is then given explicitly as
\[
\cdots \xrightarrow{A} E_0(-m) \xrightarrow{B} E_1 \xrightarrow{A} E_0 
\]
where 
\begin{equation}
\label{eq: Kos diff}
A = \sum_{\substack{1 \leq i \leq r,\\ \ell \equiv 1 \pmod 2}} a_i \theta^\ell_i + b_i \theta^\ell_{i+r} \text{ and } B = \sum_{\substack{1 \leq i \leq r, \\ \ell \equiv  0 \pmod 2}} a_i \theta^\ell_i + b_i \theta^\ell_{i+r}.
\end{equation}
\begin{remark}
 Let $\mathbf a := (a_1, .., a_r)$ and $\mathbf b := (b_1,..., b_r)$.  Then the matrices $A,B$ in the Koszul factorization can be written in the condensed notation
\[
A = B = \cdot \wedge \mathbf a + \lrcorner \mathbf{b}.
\]
\end{remark}
When $a_1, ..., a_r$ form a regular sequence on $S$, the Koszul factorization is the resolution of the top syzygy of $R/\langle a_1, ..., a_r \rangle$ (see \cite{Tate, Eisenbud}).  Geometrically, this means that the associated sheaves
\[
\cdots \xrightarrow{A} \mathcal E_0(-m) \xrightarrow{B} \mathcal E_1 \xrightarrow{A} \mathcal E_0 
\]
resolve the ACM sheaf associated to the structure sheaf $\mathcal O_Z$ of the complete intersection $Z := Z(a_1, .., a_r) \subseteq \mathbb P^{n+1}$.

\begin{theorem}\label{thm: koszul det chprim}
Suppose that $Q = \sum_{i=0}^k a_ib_i$ and that $Z = Z(a_0, ..., a_{k})$ is a complete intersection in $\mathbb P^{2k+1}$.  Consider the matrix
\[
M_Z := \begin{bmatrix}
\partial_0 a_0 & \cdots& \partial_0 a_k & \partial_0 b_0 & \cdots & \partial_0 b_k  \\
\vdots & &\vdots & \vdots & & \vdots \\
\partial_{2k+1} a_0 & \cdots & \partial_{2k+1} a_k &  \partial_{2k+1} b_0 &  \cdots & \partial_{2k+1} b_k  \\
\end{bmatrix}
\]
Then
\[
\op{ch}^{\op{prim}}_k(\mathcal O_Z) = (-1)^{k+1}\op{det}(M_Z)
\]
Furthermore for a Koszul factorization of rank greater than $k+1$, we have $\op{ch}^{\op{prim}}_k(\mathcal O_Z) = 0$. Moreover, any Koszul factorization of $Q$ must have rank at least $k+1$ since $X = Z(Q)$ is smooth.
\end{theorem}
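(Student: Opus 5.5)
The plan is to apply \Cref{thm: KL} directly to the Koszul factorization. For a regular sequence $a_0,\dots,a_k$, this factorization resolves the ACM sheaf attached to $\mathcal O_Z$, and the bounded part of the resolution consists of direct sums of line bundles, which contribute nothing to the primitive part. Here $r=k+1$. Differentiating \eqref{eq: Kos diff} entrywise gives
\[
\partial_j A = \sum_{i} (\partial_j a_i)\theta_i^{\mathrm{odd}} + (\partial_j b_i)\theta^{\mathrm{odd}}_{i+r},
\]
and similarly for $\partial_j B$ with the even-degree pieces. Write $\Theta_j := \sum_{p=1}^{2r} (M_Z)_{j,p}\,\theta_p$, so that $\partial_j A$ and $\partial_j B$ are its odd and even components. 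The alternating product $\partial_0A\partial_1B\cdots\partial_{2k+1}B$ is then the restriction of $\Theta_0\Theta_1\cdots\Theta_{2k+1}$ to one parity, and likewise for the other product. So the trace expression in \Cref{thm: KL} is, up to the sign convention for which parity is $E_0$, the supertrace $\op{str}(\Theta_0\cdots\Theta_{2k+1})$.

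Next I expand this multilinearly:
\[
\op{str}(\Theta_0\cdots\Theta_{2k+1}) = \sum_{p_0,\dots,p_{2k+1}} \prod_j (M_Z)_{j,p_j}\,\op{str}(\theta_{p_0}\cdots\theta_{p_{2k+1}}).
\]
There are $2r=2k+2$ factors, so \Cref{lem: trace of exterior operators} with $p=r$ applies. Only terms with all $p_j$ distinct survive, and each contributes $(-1)^r\op{sgn}(\sigma)$. Summing over permutations gives $(-1)^{k+1}\det(M_Z)$, up to the overall sign from the str convention. Multiplying by the constant $\frac{(-1)^kc_k}{m}$ from \Cref{thm: KL} then gives the formula up to a scalar.

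The main obstacle is matching this scalar and sign with the stated $(-1)^{k+1}$. I would first recheck the lemma's assertion that only all-distinct index tuples survive: repeated indices can produce lower-order terms through the relations \eqref{eq: koszul relations}, and these reduce to products of fewer than $2r$ operators. Those have zero supertrace by the $p<r$ case, so the claim holds. I would then track carefully the normalizations $c_k$ and $m$, together with the paper's convention for identifying classes with Jacobian ring elements. I expect these constants to be absorbed into that identification; if they are not, I would state the result up to the nonzero constant $\frac{(-1)^kc_k}{m}$. A helpful observation is that $\sum_j x_j\partial_j$ applied to $Q=\sum a_ib_i$, together with Euler's relation, relates $\det M_Z$ to $m$. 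This could be used to check the normalization on a simple example, such as a linear subspace on a Fermat quadric.

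For the remaining claims, suppose the Koszul factorization has rank $r>k+1$. Then there are only $2k+2<2r$ operator factors, so every term falls under the $p<r$ case of \Cref{lem: trace of exterior operators}, and the supertrace vanishes, giving $\op{ch}_k^{\op{prim}}=0$. For the lower bound on the rank, suppose $r\le k$. The hypersurface $Z(a_1,\dots,a_r,b_1,\dots,b_r)\subseteq\mathbb P^{2k+1}$ has dimension at least $2k+1-2r\ge 1$, so it is nonempty. At any point of it, $dQ=\sum_i (a_i\,db_i+b_i\,da_i)=0$ and $Q=0$. Such a point is a singular point of $X$, contradicting smoothness.
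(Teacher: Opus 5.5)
Your argument is the paper's proof: you read the trace in \Cref{thm: KL} as the supertrace of $\Theta_0\cdots\Theta_{2k+1}$, expand it multilinearly, and use \Cref{lem: trace of exterior operators} with $p=r=k+1$ to obtain a signed $\det(M_Z)$; you then use the $p<r$ case for rank $r>k+1$, and the inclusion $Z(a_1,\dots,a_r,b_1,\dots,b_r)\subseteq X_{\op{sing}}$ (tacitly assuming all $a_i,b_i$ have positive degree, as the paper does) for the rank bound.

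Your hesitation about the constant is warranted and is not a gap on your side. The paper's proof also passes from \Cref{thm: KL} to the bare trace by silently dropping the factor $\frac{(-1)^kc_k}{m}$. Even the sign is convention-dependent: with $\theta_i$ defined as $v\mapsto v\wedge e_i$ one gets $\theta_1\cdots\theta_r(1)=e_r\wedge\cdots\wedge e_1$, so $\op{str}(\theta_1\cdots\theta_{2r})=(-1)^{r(r+1)/2}$ rather than $(-1)^r$. What is really established, by you and by the paper, is $\op{ch}^{\op{prim}}_k(\mathcal O_Z)=\lambda\det(M_Z)$ for some nonzero constant $\lambda$ depending only on $k$ and $m$. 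That is all the later Fermat applications use.
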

\begin{proof}
Renumerate 
\[
c_i := \begin{cases} a_i & \text{ if $0 \leq i \leq k$}\\
b_{i-k-1} & \text{ if $k+1 \leq i \leq 2k+1$}.
\end{cases}
\]
Then, using \Cref{thm: KL} and \Cref{lem: trace of exterior operators}, we have
\begin{align*}
\op{ch}^{\op{prim}}_k(\mathcal O_Z)  & = \op{tr}(\partial_0 A \partial_1 B  \cdots   \partial_{2k+1} A \partial_{2k+1} B - \partial_0 B \partial_1 A  \cdots    \partial_{2k+1} B \partial_{2k+1} A) \\
& =  \sum_{0 \leq i_0, ...,i_{2k+1} \leq k} 
\partial_0c_{i_0} \cdots \partial_{2k+1}c_{i_{2k+1}} \op{tr}(\theta^1_{i_0} \cdots \theta^0_{i_{2k+1}})- \partial_0c_{i_0} \cdots \partial_{2k+1}c_{i_{2k+1}} \op{tr}(\theta^0_{i_0} \cdots \theta^1_{i_{2k+1}}) \\
& =  \sum_{0 \leq i_0, ...,i_{2k+1} \leq k} 
\partial_0c_{i_0} \cdots \partial_{2k+1}c_{i_{2k+1}} \op{str}(\theta_{i_0} \cdots \theta_{i_{2k+1}})\\
& = \sum_{0 \leq i_0 \neq ... \neq i_{2k+1} \leq k}  (-1)^{k+1} \op{sgn}(\sigma) \partial_0c_{i_0} \cdots \partial_{2k+1}c_{i_{2k+1}} \\
& = (-1)^{k+1}\op{det}(M_Z) &
\end{align*}

The case where $r > k+1$ follows easily from the same computation using \Cref{lem: trace of exterior operators}. If $r < k+1$, this means $Q = \sum_{i=1}^r a_ib_i$.  Then,
\[
\partial_jQ = \sum_{i=1}^r(\partial_ja_i)b_i +a_i(\partial_jb_i) \in \langle a_1,..., a_r,b_1, ..., b_r \rangle \text{ for all }j.
\]
Hence $X_{\op{sing}} = Z(Q_0,...,Q_{2k+1}) \subseteq Z(a_1, ..., a_r,b_1,...,b_r)$ and $\op{dim} Z(a_1, ..., a_r,b_1,...,b_r) > 0$ since $r<k+1$.  This contradicts the assumption that $X$ is smooth.
\end{proof}

\section{Algebraic cycles for Fermat hypersurfaces}

In this section, we restrict ourselves to the special case where 
$$
Q = \sum_{i=0}^{n+1} x_i^{m}
$$
which defines a Fermat hypersurface $X_m^n$. In certain cases, the Hodge conjecture has been proven for $X_m^n$. 
\begin{theorem} \label{thm: known Fermat Hodge}
The Hodge conjecture is true for $X_m^n$ when $n$ is even in the following cases:
\begin{enumerate}
    \item[(i)] $m$ is prime or a power of a prime.
    \item[(ii)] $m \le 21$ or $m = 27$.
    \item[(iii)] $n=4$ and $m$ is coprime to 6
\end{enumerate}
\end{theorem}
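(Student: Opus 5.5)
The plan is not to prove anything new here. The statement collects known results, so the proof will reduce each case to the literature through Shioda's combinatorial reformulation. First I would recall the structure of $H^{k,k}_{\op{prim}}(X^{2k}_m)$. It decomposes under the action of $\mu_m^{2k+2}/\mu_m$ into one-dimensional character spaces indexed by tuples $\alpha=(a_0,\dots,a_{2k+1})$ with $a_i\in\{1,\dots,m-1\}$ and $\sum a_i\equiv 0 \pmod m$. In Griffiths' residue description (\Cref{Griffiths Residue Theorem}) these correspond to the monomials $x^{\mathbf d}$ with $a_i=d_i+1$. By Katz and Ogus (see \cite[Theorem I]{ShiodaHodge}), the rational Hodge classes are spanned, after tensoring with $\C$, by the characters in $\mathcal B^{2k}_m$, that is, those with $\sum\langle t a_i/m\rangle = k+1$ for every $t\in(\Z/m\Z)^\times$. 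So the Hodge conjecture for $X^{2k}_m$ amounts to the following: every $\alpha\in\mathcal B^{2k}_m$ occurs in the span of classes of algebraic cycles.

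The second step is to use Shioda's reduction. Call $\alpha$ \emph{decomposable} if it can be partitioned into pairs $\{a_i,a_j\}$ with $a_i+a_j\equiv 0\pmod m$. Such $\alpha$ are realized by the $\mu_m$-translates of the linear subspaces $x_{i}=\zeta x_{j}$ lying on $X$. More generally, $\alpha$ can be split into blocks that are themselves elements of $\mathcal B$ for lower-dimensional Fermat varieties. For these, the Shioda--Katz inductive structure, namely the rational map $X^{r}_m\times X^{s}_m\dashrightarrow X^{r+s}_m$, transports algebraic cycles from the factors. It therefore suffices to show that every element of $\mathcal B^{2k}_m$ is \emph{inductively decomposable} in this sense, or to exhibit explicit cycles for the remaining indecomposable ones.

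The third step is the case analysis, citing the literature for each case. For (i), Shioda (prime $m$) and Ran and Shioda (prime powers) show that every $\alpha\in\mathcal B^{2k}_m$ is decomposable \cite{ShiodaHodge, Ran}. For (iii), with $n=4$ and $\gcd(m,6)=1$, Shioda's classification of the $6$-term elements of $\mathcal B^4_m$ shows that the indecomposable ones can only occur when $2\mid m$ or $3\mid m$, so the decomposable classes already suffice. For (ii), the remaining $m\le 21$ and $m=27$ are handled by Aoki's enumeration of the indecomposable elements of $\mathcal B_m$, together with the extra cycles constructed by Aoki and Shioda and, in the recent cases, by da Silva \cite{Aoki, daSilva}. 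Each of these cycles can be checked using \Cref{thm: koszul det chprim}.

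The main obstacle is entirely in (ii) and (iii): controlling the exceptional, indecomposable elements of $\mathcal B^{n}_m$. This is a delicate number-theoretic classification that I would not reprove. I would cite it, and note only that once the list of exceptional characters is fixed, verifying that a candidate cycle hits a given character is a finite computation of the kind provided by \Cref{thm: KL}.
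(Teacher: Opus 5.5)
Your strategy of deferring to the literature is the paper's. The paper's proof of this statement consists only of attributions: Shioda and Ran for $m$ prime \cite{ShiodaHodge, Ran}, Shioda for $m\le 20$ \cite{ShiodaHodge}, Aoki for prime powers \cite{Aoki}, and da Silva for $m=21,27$ and for case (iii) \cite{daSilva}. For a statement of this kind, what you assert each source proves \emph{is} the proof, and your case (i) asserts something false. For prime powers it is not true that every $\alpha\in\mathcal B^{2k}_m$ splits into pairs summing to $0 \bmod m$. Take $m=8$ and $\alpha=(1,5,4,6)$. Multiplying by $t=3,5,7$ gives $(3,7,4,2)$, $(5,1,4,6)$ and $(7,3,4,2)$, each with entries summing to $16=2m$, so $\alpha\in\mathcal B^2_8$. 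But the entry $5$ has no partner $3$, so $\alpha$ is not decomposable. This is exactly the family $(k,d+k,d,m-2k)$, with $d=4$ and $k=1$, which \S\ref{sec: Hodge examples} realizes by the Aoki--Shioda curves of \cite{AokiShioda}, not by lines. Appending $k-1$ pairs $(1,7)$ gives non-decomposable elements of $\mathcal B^{2k}_8$ for every $k$. Similarly, $(1,4,7,6)\in\mathcal B^2_9$ is not decomposable. So linear subspaces suffice for $m$ prime (Shioda, Ran) but not for prime powers. That case is Aoki's theorem, not Ran's or Shioda's, and its proof has to account for these extra characters.

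Your case (iii) has a similar defect. You attribute to Shioda a classification of the $6$-term elements of $\mathcal B^4_m$ showing that only decomposable characters occur when $\gcd(m,6)=1$. The paper instead credits this case to da Silva \cite{daSilva}, following Shioda's program, and the control of $\mathcal B^4_m$ it needs is the substance of that work. As written, your step rests on an unsourced assertion. In (ii), the range $m\le 20$ is Shioda's, not Aoki's. Finally, your aside that every cycle involved can be checked with \Cref{thm: koszul det chprim} is too strong. That theorem only treats complete intersections $Z(a_0,\dots,a_k)$ with $Q=\sum_{i=0}^k a_ib_i$. Cycles transported through the inductive structure are generally not of that form; compare \Cref{rmk: no Koszul}.
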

The case where $d$ is prime is given independently by Shioda and Ran \cite{Ran, ShiodaHodge}. When $m < 21$ this is treated in \cite{ShiodaHodge}. When $m$ is a power of a prime, this is proven by Aoki \cite{Aoki}. Case (iii) and when $m = 21, 27$ is proven by da Silva following Shioda's program \cite{daSilva}.

\subsection{Hodge classes on Fermat hypersurfaces}

Let $\mu_m$ be the group of $m$th root of unity and set $G_m^n = (\mu_m)^{n+2}/ \Delta$ where $\Delta$ is the diagonal. The group $G_m^n$ acts on $X$ naturally. Its character group $\widehat G_m^n$ can be identified with the following 
$$
\left\{ (a_0, \dots, a_{n+1})\in (\Z/m\Z)^{n+2} \ \mid \ \sum_{i=0}^{n+1} a_i = 0\right\}
$$
by taking $\alpha \in \widehat G_m^n$ to $(a_0, \dots, a_{n+1})$  if $\alpha (g) = \zeta_0^{a_0} \cdots \zeta_{n+1}^{a_{n+1}}$ when $g = (\zeta_0, \dots, \zeta_{n+1}) \in G_m^n$. 

Given $\alpha \in \widehat G_m^n$, let 
$$
V(\alpha) := \{ \xi \in H^n(X, \C) \ | \ g^*\xi = \alpha(g)\xi \text{ for all $g \in G_m^n$}\}
$$
Moreover, write
$$
\mathfrak{A}_m^n = \{(a_0, \dots, a_{n+1} \in \widehat G_m^n \ | \ a_i \ne 0 \text{ for all $i$}\}
$$
We have by \cite{ShiodaHodge, Ran} that
$$
H^n_{\op{prim}}(X, \C) = \bigoplus_{\alpha \in \mathfrak{A}_m^n} V(\alpha), \quad \dim V(\alpha) = 1.
$$

\begin{proposition}\label{prop: shift alpha and monomial}
    Let $\alpha= (a_0, \dots, a_{n+1}) \in \mathfrak{A}_m^n$ and assume that $0 < a_i <m$. Take $M(\alpha) := x_0^{a_0-1} \cdots x_{n+1}^{a_{n+1} - 1}$. Then 
    $$
    \op{res} \Omega^{M(\alpha)} \in V(\alpha).
    $$
\end{proposition}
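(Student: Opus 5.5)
We will show that the residue map of \Cref{Griffiths Residue Theorem} is $G_m^n$-equivariant. For $g=(\zeta_0,\dots,\zeta_{n+1})$ acting by $x_i\mapsto \zeta_i x_i$, the map $g^*\op{res}\Omega^{M}$ then equals $\op{res}\,g^*\Omega^{M}$. Granting this, it suffices to compute $g^*\Omega^{M(\alpha)}$ directly on $\mathbb P^{n+1}\setminus X$ and check that it is $\alpha(g)\,\Omega^{M(\alpha)}$.

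\textbf{Step 1 (equivariance).} Since $Q=\sum x_i^m$ is invariant under $\mu_m^{n+2}$, each $g$ preserves $X$ and $\mathbb P^{n+1}\setminus X$. The residue map $H^{n+1}(\mathbb P^{n+1}\setminus X)\to H^n(X)$ is the adjoint of the tube map $H_n(X)\to H_{n+1}(\mathbb P^{n+1}\setminus X)$, and this construction is natural under automorphisms of the pair $(\mathbb P^{n+1},X)$. Hence $g^*\circ\op{res}=\op{res}\circ g^*$. Alternatively, one can argue through the explicit \v{C}ech representative of \Cref{carlsongriffiths cech}. There $g$ permutes nothing, since it preserves each open set $D(Q_j)$ because $Q_j=m x_j^{m-1}$. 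So $g^*$ acts termwise on $\frac{M\Omega_J}{Q_{j_0}\cdots Q_{j_k}}$, and the computation in Step 2 applies to each term.

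\textbf{Step 2 (the character computation).} We substitute $x_i\mapsto\zeta_i x_i$. The form $\Omega=\sum(-1)^jx_j\,dx_0\wedge\cdots\widehat{dx_j}\cdots\wedge dx_{n+1}$ picks up the factor $\prod_i\zeta_i$, $Q$ is invariant, and $M(\alpha)$ picks up $\prod_i\zeta_i^{a_i-1}$. Therefore
\[
g^*\Omega^{M(\alpha)}=\prod_i\zeta_i^{a_i-1}\cdot\prod_i\zeta_i\cdot\Omega^{M(\alpha)}=\prod_i\zeta_i^{a_i}\,\Omega^{M(\alpha)}=\alpha(g)\,\Omega^{M(\alpha)}.
\]
The \v{C}ech version works the same way. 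Each $\Omega_J=K_J\Omega$ scales by $\prod_i\zeta_i\cdot\prod_{j\in J}\zeta_j^{-1}$, and each $Q_j$ scales by $\zeta_j^{m-1}=\zeta_j^{-1}$. The extra factors cancel, giving the same character $\alpha(g)$.

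\textbf{Step 3 (well-definedness on the quotient).} We also check that the character is well defined on $G_m^n=\mu_m^{n+2}/\Delta$. A diagonal element $\zeta\cdot\id$ acts trivially on $\mathbb P^{n+1}$, and it gives the factor $\zeta^{\sum a_i}=1$ because $\sum a_i\equiv0$. This matches the fact that the form is projectively well defined. Its degree is $\deg M+n+2-(k+1)m=0$, using $\sum a_i=(k+1)m$ for the relevant $k$. The hypothesis $0<a_i<m$ ensures that $M(\alpha)$ is an honest monomial with exponents in $[0,m-2]$. Such a monomial is nonzero in the Jacobian ring $\C[x]/(x_i^{m-1})$, so the residue is nonzero and therefore spans the one-dimensional $V(\alpha)$.

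The main obstacle is the sign and convention bookkeeping in Step 1, namely making sure that the pullback convention $g^*$ matches the definition of $\alpha$ so that we get $\alpha(g)$ and not $\alpha(g)^{-1}$. If the conventions produce the inverse, we would adjust by noting that $V(\alpha)$ is defined via $g^*$ acting on classes, which is the same pullback used in Step 2.
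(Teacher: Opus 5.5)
Your proposal is correct, and your \v{C}ech variant is exactly the ``straightforward computation'' with the Carlson--Griffiths cocycle that the paper cites: each $\Omega_J$ scales by $\prod_i\zeta_i\prod_{j\in J}\zeta_j^{-1}$ and each $Q_j=mx_j^{m-1}$ by $\zeta_j^{-1}$, which leaves the total factor $\alpha(g)$. Your primary route (pulling back the meromorphic form $\Omega^{M(\alpha)}$ on $\mathbb P^{n+1}\setminus X$ and using naturality of the residue) is an equally valid, slightly stronger variant, since it shows the full residue class is an $\alpha$-eigenvector and not just its image in $H^k(X,\Omega_X^{n-k})$; your closing worry about $\alpha(g)$ versus $\alpha(g)^{-1}$ is unfounded because you use the same pullback $g^*$ throughout.
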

\begin{proof}
    This follows from a straightforward computation using the \v{C}ech cocycle representation given in Proposition~\ref{carlsongriffiths cech}.
\end{proof}

Alternatively, given $M= x_0^{m_0}\cdots x_{2k+1}^{m_{2k+1}}$, we define $\alpha(M):= (m_0+1, \dots, m_{2k+1}+1) \in \mathfrak{A}_r^{n}$.  The following is then an easy consequence of \Cref{thm: KL}.
\begin{proposition} \label{prop: g action on MF}
Given $g \in G_r^n$, if
\[
\op{ch}^{\op{prim}}_k (\F) = \sum_{M =(m_0, \dots, m_{2k+1})} c_M x^M\in \op{Jac}(Q),
\]
then
\[
\op{ch}^{\op{prim}}_k (g^*\F) = \sum_{M =(m_0, \dots, m_{2k+1})} \alpha(M)(g)c_M x^M\in \op{Jac}(Q)
\]
\end{proposition}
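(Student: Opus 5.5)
The plan is to pull back the matrix factorization along $g$ and then read off the effect on the Kapustin--Li-type formula of \Cref{thm: KL}. Write $g = (\zeta_0, \dots, \zeta_{2k+1})$, acting by $x_i \mapsto \zeta_i x_i$. Since $Q = \sum x_i^m$ is invariant under this substitution, $g^*$ acts on $S$ and on $R$. If $\F = \coker A$ with $AB = BA = Q\cdot\id$, then $g^*\F$ is the cokernel of $A^g$, where $A^g(x) := A(\zeta_0 x_0, \dots, \zeta_{2k+1}x_{2k+1})$ and similarly for $B^g$. The identities $A^gB^g = B^gA^g = Q\cdot\id$ persist, so $(A^g,B^g)$ is again a matrix factorization, and it resolves $g^*\F$. (If $\F$ comes from a general coherent sheaf, we first reduce to its ACM part via \eqref{intro eq: chern of G}; pulling back the resolution by $g$ gives a resolution of the same shape.)

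Next I apply \Cref{thm: KL} to $(A^g,B^g)$. By the chain rule, $\partial_i(A^g) = \zeta_i (\partial_i A)^g$. Hence
\[
\op{tr}(\partial_0A^g\partial_1B^g\cdots - \cdots) = \Big(\prod_{i=0}^{2k+1}\zeta_i\Big)\, \big(\op{tr}(\partial_0A\partial_1B\cdots - \cdots)\big)^g .
\]
Writing the polynomial $\op{tr}(\cdots) = \frac{m}{(-1)^kc_k}\sum c_M x^M$, substituting $x_i \mapsto \zeta_i x_i$ multiplies each monomial $x^M$ by $\prod \zeta_i^{m_i}$. Combined with the prefactor $\prod \zeta_i$, this gives $\prod \zeta_i^{m_i+1} = \alpha(M)(g)$, which is the claimed coefficient.

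Two points need care. First, the Jacobian ideal $J(Q) = (x_i^{m-1})$ is monomial and $g$-stable, so the computation is well defined in $\op{Jac}(Q)$: representatives modulo $J(Q)$ transform compatibly. Second, $\alpha(M)(g)$ must be well defined on $G$, that is, independent of the diagonal representative. This holds because for monomials of degree $(k+1)m - (2k+2)$ the sum $\sum(m_i+1) = (k+1)m$ is divisible by $m$, so a diagonal $(\zeta,\dots,\zeta)$ contributes $\zeta^{(k+1)m} = 1$.

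The main obstacle is a convention check rather than a real difficulty: whether $g^*$ corresponds to substituting $\zeta_i x_i$ or $\zeta_i^{-1}x_i$. With the other convention one gets $\alpha(M)(g)^{-1}$, that is, $\alpha(M)(g^{-1})$. I would fix the convention to match $V(\alpha)$ and \Cref{prop: shift alpha and monomial}, checking directly that $\op{res}\Omega^{M(\alpha)}$ pulls back by $\alpha(g)$ via \Cref{carlsongriffiths cech}. This also confirms that the Griffiths identification is equivariant, since $\Omega$ scales by $\prod\zeta_i$, exactly matching the chain-rule factor.
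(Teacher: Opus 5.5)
Your proposal is correct and follows the paper's proof: pull back the factorization to $(A^g,B^g)$, apply \Cref{thm: KL}, and combine the chain-rule factor $\prod_i \zeta_i$ with the substitution factor $\prod_i \zeta_i^{m_i}$ to get $\alpha(M)(g)$. Your bookkeeping of the constant $\frac{(-1)^kc_k}{m}$ is cleaner than the paper's displayed computation, which conflates the $c_M$ with the coefficients of the trace polynomial. Your well-definedness checks are extra care that the paper leaves implicit.
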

\begin{proof}
If $A,B$ is the matrix factorization corresponding to the 2-periodic tail of $\mathcal F$ then $g^*A, g^*B$ is the pullback under $x_i \mapsto g_i \cdot x_i$.  Using  \Cref{thm: KL} and the chain rule, 
\begin{align*}
\op{ch}_k^{\op{prim}}(g^*\F) & = \frac{(-1)^kc_k}{m}\op{tr}( \partial_{0}g^*A\partial_1g^*B \cdots \partial_{2k}g^*A\partial_{2k+1}g^*B -  \partial_{0}B\partial_1g^*A \cdots \partial_{2k}g^*B\partial_{2k+1} A) \\
& = g_0\cdots g_{2k+1}\frac{(-1)^kc_k}{m}g^*\op{tr}( \partial_{0}A\partial_1B \cdots \partial_{2k}A\partial_{2k+1}B -  \partial_{0}B\partial_1A \cdots \partial_{2k}B\partial_{2k+1} A) \\
& = g_0\cdots g_{2k+1}\frac{(-1)^kc_k}{m}g^*\left(\sum_{M =(m_0, \dots, m_{2k+1})}c_M x^M\right) \\
& = \frac{(-1)^kc_k}{m}\left(\sum_{M =(m_0, \dots, m_{2k+1})} c_M g_0\cdots g_{2k+1}g^*x^M\right) \\
& = \frac{(-1)^kc_k}{m}\left(\sum_{M =(m_0, \dots, m_{2k+1})} c_M \alpha(M)(g)x^M\right).
\end{align*}
\end{proof}

The following observation was used in the work of \cite{Aoki}.
\begin{proposition}\label{prop: isolate algebraics fermat}
    Let $\F$ be a coherent sheaf on $X_r^{2k}$. Suppose 
    $$
    \op{ch}^{\op{prim}}_k (\F) = \sum_{M =(m_0, \dots, m_{2k+1})} c_M x^M\in \op{Jac}(Q)
    $$
    where we sum over all $M=(m_0, \dots, m_{2k+1})$ with $0 \le m_i \le r-2$. If $c_M \ne 0$, then $\op{res} \Omega^{M} \in V(\alpha(M)) $ lies in the complexification of the image of the Chern character.
\end{proposition}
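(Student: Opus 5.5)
Consider the $\mathbb{Q}$-span (then $\mathbb{C}$-span) of primitive Chern characters of the sheaves $g^*\F$ for $g \in G_r^{n}$, and isolate the $V(\alpha(M))$-isotypic component by averaging over the group. Since $g$ is an automorphism of $X$, each $g^*\F$ is a coherent sheaf on $X$. Its Chern character lies in the image of the Chern character map, and so does its primitive part, because the hyperplane class is $G$-invariant and the projection to primitive cohomology commutes with $g^*$.

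First, by \Cref{prop: g action on MF}, $\op{ch}^{\op{prim}}_k(g^*\F) = \sum_M \alpha(M)(g) c_M x^M$. The monomials $x^M$ with $0\le m_i\le r-2$ form a basis of the Jacobian ring of the Fermat polynomial, and by \Cref{prop: shift alpha and monomial} $\op{res}\Omega^{x^M}$ spans $V(\alpha(M))$. Distinct admissible $M$ of the relevant degree give distinct characters $\alpha(M)$, since $m_i+1 \in \{1,\dots,r-1\}$ is determined by its residue mod $r$.

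Next, fix $M_0$ with $c_{M_0}\ne 0$ and form the projector
\[
e_{M_0} := \frac{1}{|G_r^n|}\sum_{g \in G_r^n} \overline{\alpha(M_0)(g)}\, g^*.
\]
Applied to $\op{ch}_k^{\op{prim}}(\F)$, orthogonality of characters kills every term with $\alpha(M)\neq\alpha(M_0)$ and leaves $c_{M_0}x^{M_0}$. Via the residue isomorphism, this is $c_{M_0}\op{res}\Omega^{M_0}$ up to the nonzero scalar $\tfrac{(-1)^kc_k}{m}$ already built into \Cref{thm: KL}. Because the coefficients $\overline{\alpha(M_0)(g)}$ are complex, this exhibits $c_{M_0}\op{res}\Omega^{M_0}$ as a $\mathbb{C}$-linear combination of Chern characters, hence in the complexification of the image. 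Dividing by $c_{M_0}\neq 0$ finishes the argument.

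The main obstacle is the compatibility of the residue isomorphism of \Cref{Griffiths Residue Theorem} with the $G$-action, namely that the element of the Jacobian ring attached to $g^*$ of a class is computed by the twisted substitution in \Cref{prop: g action on MF}. That is exactly the content of \Cref{prop: g action on MF} together with \Cref{prop: shift alpha and monomial}, so I would invoke them directly. I would only check that the normalization is consistent, i.e.\ that the factor $g_0\cdots g_{2k+1}$ matches the character $\alpha(M)$ through the shift by one in each exponent.
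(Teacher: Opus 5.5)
Your proposal is correct and follows the paper's proof: the paper likewise forms $\sum_{g} \alpha(M)^{-1}(g)\,\op{ch}^{\op{prim}}_k(g^*\F)$ and rewrites it using \Cref{prop: g action on MF}. It then uses character orthogonality and the injectivity of $M \mapsto \alpha(M)$ to obtain $c_M|G_r^{2k}|\,x^M$.

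One small correction to your justification. The primitive part of $\op{ch}_k(g^*\F)$ lies in the span of the image because $H^k = k!\,\op{ch}_k(\O_X(1))$ is itself a Chern character. $G$-invariance of $H$ plays a different role: it is what lets $g^*$ commute with the primitive projection.
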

\begin{proof}

We have
\begin{align*}
\sum_{g \in G^n_m} \alpha(M)^{-1}(g)   \op{ch}^{\op{prim}}_k (g^*\F) 
& = \sum_{g \in G^n_m}  \sum_{N} c_N \alpha(M)^{-1}(g)\alpha(N)(g)x^N \\
& = \sum_{N} c_N  (\sum_{g \in G^n_m} \alpha(M)^{-1}(g)\alpha(N)(g))x^N \\
& = c_M|G^n_m|x^M.
\end{align*}
The first line is by \Cref{prop: g action on MF}. The second line rearranges the summation.  The third line follows from the fact that $\alpha(M) = \alpha(N)$ if and only $M=N$ and standard character theory.
\end{proof}

\subsection{Chern characters of complete intersections on Fermat hypersurfaces}\label{sec: Hodge examples}

Recall the Fermat hypersurface
$$
X_m^{2k} := Z(x_0^m + \dots + x_{2k+1}^m ) \subseteq \mathbb{P}^{2k+1}.
$$
In this subsection, we compute the primitive component of the Chern character of structure sheaves on certain complete intersections in $\mathbb P^{2k+1}$ that are subvarieties in $X_d^{2k}$. This will recover and enhance the work of \cite{Ran, ShiodaHodge, AokiShioda, Aoki} and answer Question 1 of \cite{daSilva}. 

\begin{example}
The following is a complete intersection considered in \cite{Ran, ShiodaHodge}. Take the complete intersection $Z = Z(f_0, \dots, f_k) \subseteq X_m^{2k}$ given by $$f_i = x_{2i} - \zeta x_{2i+1},$$ where $\zeta$ is a primitive $2m$-th root of unity. This complete intersection corresponds to the Koszul factorization
$$
a_i = x_{2i} - \zeta x_{2i+1}, \qquad b_i = \sum_{\ell=0}^{m-1} x_{2i}^\ell(\zeta x_{2i+1})^{m-1-\ell}.
$$
Using Theorem~\ref{thm: koszul det chprim}, a direct computation from the definitions gives
$$
\op{ch}_k^{\op{prim}}(\O_Z) = \det M_Z = (-1)^{k(k+1)/2}((1-m)\zeta)^{k+1} \prod_{i=0}^k \left( \sum_{\ell_i = 0}^{m-2} x_{2i}^{\ell} (\zeta x_{2i+1})^{m-2-\ell}\right).
$$
By Propositions~\ref{prop: shift alpha and monomial} and ~\ref{prop: isolate algebraics fermat}, this implies that the classes in $V(\alpha)$ where
$$
\alpha = (\ell_0, m-\ell_0, \ell_1, m-\ell_1, \dots, \ell_n, m-\ell_n)
$$
for any $\ell_i \in \{1, \dots, m-1\}$ lies in the image of the complexified Chern character map, as well as any permutations of the $\alpha_i$. In \cite{Ran, ShiodaHodge}, these cycles were enough to prove the Hodge conjecture for Fermat hypersurfaces of prime degree.
\end{example}

\begin{example}
Suppose $m = 2d$. Take the curve on $X_{2m}^1$ given by the complete intersection $Z=Z(f_0, f_1) \subseteq \mathbb{P}^3$ where
\begin{align*}
    f_0 &:= x_0^d + x_1^d + ix_2^d;\\
    f_1 &:= x_3^2 - 2^{1/d} x_0x_1.
\end{align*}
Here, we can compute that this complete intersection corresponds to the Koszul factorization given by 
$$
a_0 = f_0; \quad a_1 = f_1; \quad b_0 = x_0^d + x_1^d - ix_2^d; \quad b_1 = \sum_{k=0}^{d-1} x_3^{2k} (2^{1/d} x_0x_1)^{d-1-k}.
$$
Using Theorem~\ref{thm: koszul det chprim} again, we can compute 
\begin{align*}
  \op{ch}_k^{\op{prim}}(\O_Z) =   \det M_Z &= -4(2^{1/d}) d(d-1)i x_0^d x_{2}^{d-1} x_3 \sum_{k=1}^{d-1} x_3^{m-2-2k} (2^{1/d}x_0x_1)^{k-1}\\
  &\quad + 4(2^{1/d}) d(d-1)i x_1^d x_{2}^{d-1} x_3 \sum_{k=1}^{d-1} x_3^{m-2-2k} (2^{1/d}x_0x_1)^{k-1}.
\end{align*}
\end{example}
Again, by Propositions~\ref{prop: shift alpha and monomial} and ~\ref{prop: isolate algebraics fermat}, this implies that the classes in $V(\alpha)$ with 
$$
\alpha = (k, d+k, d, m-2k) \text{ for $k=1, \dots, d-1$}
$$
are in the span of the complexified algebraic classes, as well as any permutations of these. This recovers \cite[Theorem 1]{AokiShioda}.

\begin{example}\label{ex:daSilva} Consider the fourfold $X_m^4$ where 3 divides $m$. Write $\ell = \tfrac{m}{3}$. We have the following choice of $f_i$
$$
    f_0= \sum_{i} x_i^{\ell},\qquad 
    f_1 = \sum_{i<j} x_i^\ell x_j^{\ell}, \text{ and }, \qquad 
    f_2 = \sum_{i<j<k} x_i^\ell x_j^{\ell} x_k^{\ell}.
$$
In \cite[Question 1]{daSilva}, da Silva asks if the complete intersection $Z:=Z(f_0, f_1, f_2)$ could give new algebraic cycles for Fermat fourfolds $X_4^m$. As noted in loc. cit., the Fermat polynomial can be written as $a_0b_0+a_1b_1+a_2b_2$ where 
$$
    a_0 = f_0; \quad b_0 = f_0^2; \quad 
    a_1 = f_1; \quad b_1 = -3f_2; \quad 
    a_2 = f_2; \quad b_2 = 3.
$$
    Since $b_2$ is a constant, the matrix $M$ in Theorem~\ref{thm: koszul det chprim} has a column of zeros. Thus, by Theorem~\ref{thm: koszul det chprim}, we have $\op{ch}^{\op{prim}}_2(\O_Z) = 0$, answering da Silva's question in the negative. In \textsection\ref{sec: Hodge for deg 33}, we will give a way to give the Hodge class da Silva set out to find with this example and prove the Hodge conjecture for the degree 33 Fermat fourfold (\Cref{thm: Hodge conjecture for deg 33 Fermat}).
\end{example}

\subsection{Proof of the Hodge conjecture for the degree 33 Fermat fourfold}\label{sec: Hodge for deg 33}

Consider the Fermat polynomial 
$$
F_{33}^4 = x_0^{33} + x_1^{33} + x_2^{33} + x_3^{33} + x_4^{33} + x_5^{33}
$$
and corresponding fourfold $X_{33}^4 \subseteq\mathbb{P}^5$.
Using the methods of Aoki \cite{Aoki, Aoki2} and Shioda \cite{ShiodaHodge}, da Silva \cite{daSilva} proved that the Hodge conjecture for $X_{33}^4$ reduces to establishing that the eigenspace $V(\alpha)$ is contained in the   complexified image of the Chern character map, where
$$
\alpha = (19,7,13,10,28,22).
$$

This class has proven elusive in the past: it and the permutations of its Galois orbit are all classes which are not quasi-decomposable in Shioda's sense. Hence, previous techniques have not realized them as algebraic \cite{daSilva}. Nonetheless, we now prove the following.
\begin{theorem}\label{thm: Hodge conjecture for deg 33 Fermat}
The eigenspaces $V(\alpha)$ is in the complexified span of the algebraic classes. Hence, the Hodge conjecture is true for $X_{33}^4$. 
\end{theorem}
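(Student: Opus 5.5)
By da Silva's reduction, it suffices to exhibit a single coherent sheaf $\mathcal G$ on $X_{33}^4$ whose primitive Chern character, written in the Jacobian ring via \Cref{thm: KL}, has a nonzero coefficient on the monomial $M(\alpha)=x_0^{18}x_1^{6}x_2^{12}x_3^{9}x_4^{27}x_5^{21}$. \Cref{prop: isolate algebraics fermat} then gives $V(\alpha)$ in the complexified image of the Chern character, and the Galois conjugates follow by the same argument or by Galois invariance of algebraic classes.

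To build $\mathcal G$ I would use the cubic fourfold $Y=Z(x_0^2x_1+x_1^2x_2+x_2^2x_3+x_3^2x_4+x_4^2x_0+x_3^3)$. By \cite{BGM}, its algebraic lattice has full rank and is generated by rational normal scrolls $S\subset Y$. Because $Y$ is a sum of invertible-type monomials plus $x_3^3$, there is a Shioda map $\phi\colon X_{33}^4\dashrightarrow Y$ of the form $x_i\mapsto$ monomials, so that $\phi^*$ of the cubic equation equals $F_{33}^4$ times a monomial. (For the chain/loop exponent matrix one inverts the matrix and clears denominators so that each term becomes a pure $33$rd power.) The $\mu$-character bookkeeping of Shioda maps then matches $V(\alpha)$ with a character-eigenspace of $H^4_{\mathrm{prim}}(Y)$ under the diagonal symmetry group of $Y$. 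Concretely, $\phi^*$ sends a Jacobian monomial of $Y$ of degree $3\cdot3-6=3$ to a monomial of degree $3\cdot33-6$ on $X_{33}^4$, and I would identify which degree-$3$ monomial of $\op{Jac}$ of $Y$ corresponds to $M(\alpha)$.

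The proof then has two steps. First, on $Y$ the algebraic classes span all of $H^{2,2}(Y)$, and in particular the relevant eigenspace of $H^{2,2}_{\mathrm{prim}}(Y)$. Second, take the closure of $\phi^{-1}(S)$, or more precisely the sheaf-theoretic pullback of $\mathcal O_S$ along the finite part of the Shioda map. Its primitive Chern character is the pushforward/pullback of that of $\mathcal O_S$, twisted by correction terms supported on the coordinate hyperplanes. Those correction terms only contribute characters with some $a_i=0$, so they vanish in $V(\alpha)$ since $\alpha\in\mathfrak A$. Alternatively, and more in the spirit of the paper, I would compute directly: take the matrix factorization $(A,B)$ of the cubic attached to the ACM syzygy of $\mathcal O_S$ and substitute monomials to get a factorization $(\phi^*A,\phi^*B)$ of $F_{33}^4$ times a monomial. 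Then apply \Cref{thm: KL} together with the chain rule, as in \Cref{prop: g action on MF}. The Jacobian determinant of the monomial substitution multiplies the degree-$3$ Kapustin--Li polynomial of $Y$ into the correct degree.

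The main obstacle is the nonvanishing check. One must confirm that some scroll class has nonzero component in the specific eigenspace of $Y$ hit by $\alpha$, and that this component survives $\phi^*$ rather than being killed by the hyperplane-supported corrections. For this I would rely on the full-rank statement of \cite{BGM}: since the scrolls span all Hodge classes of $Y$, some rational combination has nonzero projection to that eigenspace. Then I would verify that $\phi^*$ is injective on that eigenspace by comparing characters, namely that $\alpha$ is exactly the image character with no zero entries. If that abstract route is unclear, I would fall back on computing one scroll's matrix factorization explicitly and reading off the $M(\alpha)$-coefficient from \Cref{thm: KL}.
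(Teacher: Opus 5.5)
Your strategy is the paper's: pull back the cubic scrolls of the Klein-type cubic fourfold along the Shioda map, use the full-rank result of \cite{BGM} to get nonzero coefficients on $Y$, and isolate $V(\alpha)$ with \Cref{prop: isolate algebraics fermat}. However, you leave unproved the one step that actually carries the theorem: that $V(\alpha)$ is in the image of $\phi^*$ at all.

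This does not follow from general character bookkeeping. Since $\dim H^4(Y)=23$, $\phi^*$ can reach at most $23$ of the eigenspaces $V(\beta)$ of $X_{33}^4$, so you must check that $\alpha$ is among them. The paper does exactly this. With $y_0=x_0^{16}x_1^{-8}x_2^4x_3^{-2}x_4$, $y_4=x_0^{-8}x_1^4x_2^{-2}x_3x_4^{16}$ and $y_5=x_5^{11}$, one gets $y_0y_4y_5\,(x_0\cdots x_5)^{10}=x_0^{18}x_1^{6}x_2^{12}x_3^{9}x_4^{27}x_5^{21}=M(\alpha)$. So by the residue pullback formula of \cite[\textsection 2.7]{BvGK}, $\phi^*\op{Res}\Omega^{x_0x_4x_5}$ is a nonzero multiple of $\op{Res}\Omega^{M(\alpha)}$, while the other $19$ basis monomials $x_ix_jx_k$ go to other monomials. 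This explicit identity is also what gives the nonvanishing you wanted to obtain ``by comparing characters''. Characters alone only locate the eigenspace the image lands in; they do not show the image is nonzero. Note too that the cubic must be $x_0^2x_1+\dots+x_4^2x_0+x_5^3$; the $x_3^3$ you copied is a misprint in the paper, and with it $y_5=x_5^{11}$ does not turn the equation into $F_{33}^4$.

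Two smaller points. First, your fallback of substituting the monomials into the matrix factorization $(A,B)$ of a scroll does not produce a matrix factorization of $F_{33}^4$. After clearing denominators you factor a monomial multiple of it, so \Cref{thm: KL} cannot be applied; stay with the cohomological pullback. Second, your remark about coordinate hyperplanes is a genuine improvement. Classes supported on $X_{33}^4\cap\{x_i=0\}$ are invariant under the $i$th factor $\mu_{33}$ of $G_{33}^4$, which fixes that divisor pointwise, so they lie in eigenspaces with $a_i=0$. This justifies $\op{ch}_2^{\op{prim}}(\phi^*\O_S)=\phi^*\op{ch}_2^{\op{prim}}(\O_S)$ on the $V(\alpha)$-component even though $\phi$ is only rational, a point the paper passes over.
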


To prove this theorem, we will pull back classes from a special cubic fourfold to $X_{33}^4$. Consider the polynomial
\begin{equation}
F_A := x_0^2 x_1 + x_1^2 x_2 + x_2^2 x_3 + x_3^2 x_4 + x_4^2 x_0 + x_3^3.
\end{equation}
and its corresponding cubic fourfold $X_A = Z(F_A) \subset \mathbb{P}^5$. This fourfold is well-studied. As seen in
\cite{BGM}, it has a symplectic automorphism of order 11, is rational, $H(X_A, \Z)\cap H^{2,2}$ has rank 21, and it has 6270 families of cubic scrolls which generate the lattice $H(X_A, \Z)\cap H^{2,2}$. The Griffiths Residue Theorem (see \Cref{Griffiths Residue Theorem}) implies that $H^{2,2}_{\op{prim}}(X_A)$ is 20 dimensional.  Indeed, we have the following explicit basis 
\begin{equation}\label{eq: basis Jac XA}
\op{Res}(\Omega^{x_i x_j x_k}), \text{ where } 0 \le i < j < k \le 5,
\end{equation}
for $H^{2,2}_{\op{prim}}(X_A)$  \cite[Theorem 2.10]{HLSW}.  Since the primitive Hodge lattice is also 20-dimensional, it follows that the complexified image of the Chern character map to $X_A$ also has the explicit image above.

Thus, we have that, for any $(i_0,j_0,k_0)$ with $0 \le i_0 < j_0 < k_0 \le 5$, there exists a cubic scroll $S(i_0,j_0,k_0)$ so that
$$
\op{ch}_2^{\op{prim}}(\O_{S(i_0,j_0,k_0)}) = \sum_{0 \le i'< j'<k'\le 5} c_{(i,j,k)} \op{Res}(\Omega^{x_ix_j x_k}) \in H^{2,2}_{\op{prim}}(X_A)
$$
and $c_{(i_0,j_0,k_0)} \ne 0$.

Define the rational map
\begin{equation}\label{eq: shioda map 1}
\phi: X_{33}^4 \dashrightarrow X_A, \qquad 
(x_0: \dots: x_5) \mapsto (y_0: \dots: y_5),
\end{equation}
where 
\begin{equation}\begin{aligned}\label{eq: shioda map}
    y_0 &= x_0^{16}x_1^{-8}x_2^4x_3^{-2}x_4,
    &y_1 &= x_0x_1^{16}x_2^{-8}x_3^4x_4^{-2},\\
    y_2 &= x_0^{-2}x_1x_2^{16}x_3^{-8}x_4^4,
    &y_3 &= x_0^4x_1^{-2}x_2x_3^{16}x_4^{-8},\\
    y_4 &= x_0^{-8}x_1^4x_2^{-2}x_3x_4^{16},
    &y_5 &= x_5^{11}.
\end{aligned}\end{equation}
\begin{remark} This map is an example of a Shioda map, first introduced by Shioda to study Delsarte surfaces and birational geometry of mirrors \cite{ShiodaMaps, BvGK, Bini, Kelly}. \end{remark}

\begin{proof}[Proof of \Cref{thm: Hodge conjecture for deg 33 Fermat}]
We pullback the forms in ~\eqref{eq: basis Jac XA} via the rational map $\phi$ to obtain that
$$
\phi^*\left(\op{Res}\Omega^{x_ix_jx_k}\right) = \op{Res}\left(1185921 \frac{y_iy_jy_k (x_0^{10}x_1^{10}x_2^{10}x_3^{10}x_4^{10}x_5^{10})\Omega}{(F_{33}^4)^3}\right),
$$
where $y_i, y_j, y_k$ are as in ~\eqref{eq: shioda map} (see \cite[\textsection 2.7]{BvGK}). Using the fact that 
\begin{align*}
y_0y_4y_5(x_0^{10}x_1^{10}x_2^{10}x_3^{10}x_4^{10}x_5^{10}) &= (x_0^{16}x_1^{-8}x_2^4x_3^{-2}x_4)(x_0^{-8}x_1^4x_2^{-2}x_3x_4^{16})( x_5^{11})(x_0^{10}x_1^{10}x_2^{10}x_3^{10}x_4^{10}x_5^{10})\\
&= x_0^{18}x_1^{6}x_2^{12}x_3^{9}x_4^{27}x_5^{21},
\end{align*}
we get, in particular, the equation
$$
\phi^*(\op{Res}(\Omega^{x_0x_4x_5})) = \op{Res}(\Omega^{ x_0^{18}x_1^{6}x_2^{12}x_3^{9}x_4^{27}x_5^{21}}) \in V(\alpha_1).
$$

Take the cubic scroll $S_{(0,4,5)}$ on $X_A$. We now pullback via $\phi$ its structure sheaf. Note that
$$
\op{ch}_2^{\op{prim}}(\phi^* \O_{S_{(0,4,5)}}) = \phi^*\op{ch}_2^{\op{prim}}(\O_{S_{(0,4,5)}})) =  \sum_{0 \le i< j<k\le 5} c_{(i,j,k)} \phi^*\op{Res}(\Omega^{x_ix_j x_k}).
$$
where $c_{(0,4,5)} \ne 0$, hence the coefficient of $\op{Res}(\Omega^{x_0^{18}x_1^{6}x_2^{12}x_3^{9}x_4^{27}x_5^{21}})$ in $\op{ch}_2^{\op{prim}}(\phi^* \O_{S_{(0,4,5)}})$ is nonzero. Thus $V(\alpha)$ is in the span of the complexified algebraic classes by \Cref{prop: isolate algebraics fermat}.
\end{proof}

\begin{remark}\label{rmk: no Koszul} It would be satisfying to find an ACM sheaf and its corresponding 2-periodic resolution that realizes these new algebraic classes. We conjecture that no Koszul factorization on $X_{33}^4$ will have a Chern character with a nonzero coefficient in front of the monomial $x_0^{18}x_1^{6}x_2^{12}x_3^{9}x_4^{27}x_5^{21}$. We remark there has been some work using AI looking to find such a Koszul factorization with such a monomial which has bore no fruit \cite{Chojecki}.
\end{remark}

\bibliography{FK}
\bibliographystyle{amsalpha}
\end{document}